\newtheorem{Th}{Theorem}
\newtheorem{Lem}{Lemma}
\begin{document}

\thispagestyle{empty}

\title[ ]{Stability of asymptotically Hamiltonian systems with damped oscillatory and stochastic perturbations}

\author[O.A. Sultanov]{Oskar A. Sultanov}

\address{
Institute of Mathematics, Ufa Federal Research Centre, Russian Academy of Sciences, Chernyshevsky street, 112, Ufa 450008 Russia.}
\email{oasultanov@gmail.com}


\maketitle

{\small
\begin{quote}
\noindent{\bf Abstract.} 
A class of asymptotically autonomous systems on the plane with oscillatory coefficients is considered. It is assumed that the limiting system is Hamiltonian with a stable equilibrium. The effect of damped multiplicative stochastic perturbations of white noise type on the stability of the system is discussed. It is shown that different long-term asymptotic regimes for solutions are admissible in the system and the stochastic stability of the equilibrium depends on the realized regime. In particular, we show that stable phase locking is possible in the system due to decaying stochastic perturbations. The proposed analysis is based on a combination of the averaging technique and the construction of stochastic Lyapunov functions. 

 \medskip

\noindent{\bf Keywords: }{Asymptotically autonomous system, stochastic perturbation, stability,
stochastic Lyapunov function, phase locking, phase drifting}

\medskip
\noindent{\bf Mathematics Subject Classification: }{34F10, 93E15, 37J65}

\end{quote}
}
{\small

\section{Introduction}
The effect of perturbations on dynamical systems is a classical problem in qualitative and asymptotic theories of differential equations. This paper investigates the influence of perturbations with intensities decaying  in time on the stability of planar autonomous Hamiltonian systems. Such asymptotically autonomous systems have been studied in many papers. For example, it follows from~\cite{LM56,LDP74} that damped perturbations can preserve the asymptotic properties of solutions of the corresponding limiting autonomous systems. However, in the general case, the behaviour of perturbed and unperturbed trajectories can differ significantly~\cite{HT94,OS21IJBC}. In this case, both the properties of the unperturbed system and the class of damped perturbations are important~\cite{LRS02,KS05,MR08,OS22Non}. 

This paper focusses on the influence of stochastic perturbations. It is well known that even weak noise can destroy the stability of dynamical system and cause the exit of trajectories from any bounded domain~\cite{FW98}. The effect of stochastic disturbances on the qualitative behaviour of solutions of autonomous systems has been studied in many papers (see, for example,~\cite{RH64,NSN90,BG02,AMR08,DNR11,BRR15,TSD18}). Damped stochastic perturbations of scalar autonomous systems were discussed in~\cite{AGR09,ACR11,KT13}. Stochastic bifurcations and the long-term asymptotic behaviour of solutions of Hamiltonian systems with decaying stochastic perturbations were studied in~\cite{OS22IJBC,OS22arxiv}.

We consider a special class of damped perturbations with oscillating coefficients. It is known that oscillating perturbations can be effectively used to control the dynamics of nonlinear systems due to resonance and phase-locking phenomena~\cite{BN10,OK19}. In the deterministic case, the effect of such decaying perturbations on the long-term dynamics of autonomous systems was discussed in several papers. In particular, asymptotic analysis of linear systems was done in~\cite{PN06,BN10}. Bifurcations of equilibria and possible asymptotic modes for solutions of nonlinear systems were studied in~\cite{OS21DCDS,OS21JMS}. Damped oscillatory perturbations with chirped-frequency were considered in~\cite{OS23DCDSB}. However, taking into account a noise in the models can change the corresponding dynamics due to stochastic bifurcations (see, for instance,~\cite{OS22IJBC}). To the best of the author's knowledge the combined effect of damped oscillatory and stochastic perturbations on the stability of systems has not been thoroughly investigated. This is the aim of the present paper.

Thus, the present paper investigates the stability of Hamiltonian systems subject to multiplicative noise with oscillating coefficients. It is assumed that the intensity of perturbations fades with time and the limiting system has a neutrally stable equilibrium. Possible asymptotic regimes in the perturbed stochastic system and the conditions for stability of the equilibrium are discussed. In the analysis of stability, the method of stochastic Lyapunov functions is used~\cite{HK67,RKh12}.

The paper is organized as follows. In Section~\ref{PS}, the formulation of the problem is given and the class of damped perturbations is described. The main results are presented in Section~\ref{MR}. The proofs are contained in the subsequent sections. In particular, in Section~\ref{ChOfVar}, the transformation is constructed that simplifies the drift terms of the perturbed system in the asymptotics at infinity in time. The study of the structure of the simplified equations in Section~\ref{AR} leads to a description of possible asymptotic regimes in the system. The stability analysis based on the construction of stochastic Lyapunov functions is contained in sections~\ref{SPL} and~\ref{SPD} for different regimes. Section~\ref{SEx} considers the examples of stochastic systems and the application of the proposed theory. The paper concludes with a brief discussion of the results obtained.

\section{Problem statement}
\label{PS}
Consider the system of It\^{o} stochastic differential equations
\begin{equation}\label{FulSys}
	d {\bf x}(t) = {\bf a}({\bf x}(t),t) dt + {\bf A}({\bf x}(t),t)\,d{\bf w}(t), \quad t>t_0>0, \quad {\bf x}(t_0)={\bf x}_0\in\mathbb R^2,
\end{equation}
where ${\bf x}=(x_1,x_2)^T$, ${\bf w}(t)=(w_1(t),w_2(t))^T$ is a two dimensional Weiner process on a probability space $(\Omega,\mathcal F,\mathbb P)$, ${\bf a}({\bf x},t)=(a_1(x_1,x_2,t),a_2(x_1,x_2,t))^T$ is a vector function and ${\bf A}({\bf x},t)=\{\alpha_{i,j}(x_1,x_2,t)\}_{2\times 2}$ is a $2\times2$ matrix. The functions $a_i(x_1,x_2,t)$ and $\alpha_{i,j}(x_1,x_2,t)$, defined for all $(x_1,x_2,t)\in\mathbb R^2\times\mathbb R_+$, are infinitely differentiable and do not depend on $\omega\in\Omega$. It is also assumed that 
\begin{gather}\label{zero}
{\bf a}(0,t)\equiv 0, \quad {\bf A}(0,t)\equiv 0,
\end{gather}
and there exists $M>0$ such that ${\bf a}({\bf x},t)$ and ${\bf A}({\bf x},t)$ satisfy the Lipschitz condition:
\begin{gather}\label{lip}
\begin{split}
&|{\bf a}({\bf x}_1,t)-{\bf a}({\bf x}_2,t)|\leq M |{\bf x}_1-{\bf x}_2|, \\
& \|{\bf A}({\bf x}_1,t)-{\bf A}({\bf x}_2,t)\|\leq M |{\bf x}_1-{\bf x}_2|
\end{split}
\end{gather}
for all ${\bf x}_1, {\bf x}_2\in \mathbb R^2$ and $t\geq t_0$. Here, $|{\bf x}|=\sqrt{x_1^2+x_2^2}$ and $\|\cdot\|$ is the operator norm coordinated with the norm $|\cdot|$ of $\mathbb R^2$. Note that under these assumptions system \eqref{FulSys} has a unique continuous (with probability one) solution ${\bf x}(t)=(x_1(t),x_2(t))^T$ for all $t\geq t_0$ and for any initial point ${\bf x}_0\in\mathbb R^2$ (see, for example,~\cite[\S 5.2]{BOks98}).  

Furthermore, system \eqref{FulSys} is assumed to be asymptotically autonomous such that for each fixed ${\bf x}\in \mathbb R^2$ there exists the limits
\begin{equation*}
	\lim_{t\to\infty} {\bf a}({\bf x},t)={\bf a}_0({\bf x}), \quad \lim_{t\to\infty} {\bf A}({\bf x},t)={\bf 0}, 
\end{equation*}
where
\begin{gather}\label{H0as}
{\bf a}_0({\bf x})\equiv \begin{pmatrix} \partial_{x_2} H(x_1,x_2) \\ -\partial_{x_1} H(x_1,x_2) \end{pmatrix}, \quad 
H(x_1,x_2)=\frac{|{\bf x}|^2}{2}+\mathcal O(|{\bf x}|^3), \quad |{\bf x}|\to 0.
\end{gather}
In this case, the corresponding limiting system
\begin{equation}\label{LimSys}
	\frac{d{\bf x}}{dt}={\bf a}_0({\bf x})
\end{equation}
is Hamiltonian with a stable fixed point at the origin $(0,0)$, and it is assumed that there exist $E_0>0$ and $r_0>0$ such that for all $E\in (0,E_0]$ the level lines $\{(x_1,x_2)\in\mathbb R^2: H(x_1,x_2)=E\}$, lying in the ball $\mathcal B_0=\{(x_1,x_2)\in\mathbb R^2: |{\bf x}|\leq r_0\}$, are closed curves and correspond to periodic solutions of system \eqref{LimSys} with the period $T(E)=2\pi/\nu(E)$, $\nu(E)\neq 0$ for all $E\in (0,E_0]$. The value $E=0$ corresponds to the equilibrium ${\bf x}(t)\equiv 0$. It can easily be checked that $\nu(E)=1+\mathcal O(E)$ as $E\to 0$. We also assume that $\mathcal B_0$ does not contain any fixed points of system \eqref{LimSys}, except for the origin.

Thus, system \eqref{FulSys} can be viewed as a perturbation of the autonomous Hamiltonian system \eqref{LimSys}. Let us describe the class of damped stochastic perturbations considered in this paper. It is assumed that
\begin{equation}\label{HFBas}
	\begin{split}
		  &{\bf a}({\bf x},t)-{\bf a}_0({\bf x})\sim \sum_{k=1}^\infty t^{-\frac{k}{q}} {\bf a}_k({\bf x},S(t)), \quad 
			{\bf A}({\bf x},t)\sim \sum_{k=1}^\infty t^{-\frac{k}{q}} {\bf A}_{k}({\bf x},S(t))
	\end{split}
\end{equation}
as $t\to\infty$ uniformly for all ${\bf x}\in \mathcal B_0$ with $q\in\mathbb Z_+=\{1,2,\dots\}$, where the coefficients ${\bf a}_k({\bf x},S)$ and ${\bf A}_k({\bf x},S)=\{\alpha_{i,j,k}(x_1,x_2,S)\}_{2\times 2}$ are $2\pi$-periodic with respect to $S$,
\begin{gather}\label{Sform}
S(t)\equiv\sum_{k=0}^{q-1} s_k t^{1-\frac{k}{q}}+s_{q} \log t, \quad s_k={\hbox{\rm const}},
\end{gather}
and $s_0$ satisfies a resonance condition
\begin{gather}\label{rescond}
s_0=\varkappa \nu(0)
\end{gather}
with some $\varkappa\in\mathbb Z_+$.
Note that the series in \eqref{HFBas} are asymptotic as $t\to\infty$. In other words, it is assumed that for all $n\in \mathbb Z_+$ the following estimates hold:
\begin{gather*}
|{\bf a}({\bf x},t)-\sum_{k=0}^{n-1}t^{-\frac{k}{q}} {\bf a}_k({\bf x},S(t))|=\mathcal O(t^{-\frac{n}{q}}), \quad
\|{\bf A}({\bf x},t)-\sum_{k=1}^{n-1}t^{-\frac{k}{q}} {\bf A}_k({\bf x},S(t))\|=\mathcal O(t^{-\frac{n}{q}})
\end{gather*} 
as $t\to\infty$ uniformly for all ${\bf x}\in \mathcal B_0$. 

Note that power-law decaying perturbations arise in the context of many nonlinear and non-autonomous problems~\cite{IKNF06,BG08,KF13,Pan21,Dong22}. It is known that damped deterministic perturbations can affect the stability conditions and can lead to different asymptotic regimes~\cite{OS21DCDS,OS21IJBC,OS22Non}. The influence of such stochastic perturbations is discussed in this paper. Consider the simplest example given by the following linear system with a damped oscillatory perturbation of white noise type:
\begin{gather}\label{Ex0}
\begin{split}
&dx_1=x_2\,dt, \\
& dx_2=\left(-x_1+  t^{-1} b_0 x_2\right)\, dt + t^{-\frac{p}{2}}c_1 x_1\cos S(t)\, dw_2(t), \quad t\geq 1,
\end{split}
\end{gather}
where $S(t)\equiv s_0 t$, $p\in\mathbb Z_{+}$, $b_0,c_1,s_0={\hbox{\rm const}}$. It can easily be checked that system \eqref{Ex0} is of the form \eqref{FulSys} with $q=2$, 
\begin{gather*}
{\bf a}({\bf x},t)\equiv {\bf a}_0({\bf x})+t^{-1}
\begin{pmatrix}
0\\b_0 x_2
\end{pmatrix}
, 
\quad {\bf A}({\bf x},t)\equiv t^{-\frac{p}{2}} 
	\begin{pmatrix} 0&0\\
	0&c_1 x_1 \cos S(t)
	\end{pmatrix}, \quad 
	H(x_1,x_2)\equiv \frac{|{\bf x}|^2}{2}.
\end{gather*}
The corresponding limiting autonomous system ($b_0=c_1=0$) has $2\pi$-periodic general solution $x_1(t+\phi;E)\equiv\sqrt{2E}\cos  (t+\phi)$, $x_2(t+\phi;E)\equiv -\sqrt{2E}\sin (t+\phi)$, where $E$ and $\phi$ are arbitrary constants and $\nu(E)\equiv 1$. In the case of a deterministic damped perturbation ($b_0\neq 0$, $c_1=0$), the asymptotics of the general solution (see, for instance,~\cite{WW66})
\begin{align*}
{\bf x}(t) = 
t^{\frac{b_0}{2}} 
\begin{pmatrix}
\sqrt{2E}\cos  (t+\phi)+\mathcal O(t^{-1})\\
-\sqrt{2E}\sin  (t+\phi)+\mathcal O(t^{-1})
\end{pmatrix}
\end{align*}
as $t\to\infty$ shows that the stability of the equilibrium depends on the sign of the parameter $b_0$ (see Fig.~\ref{Fig12}, a). The numeric analysis of system \eqref{Ex0} with $b_0\neq 0$, $c_1\neq 0$ and $s_0=1$ indicates that the stability of the equilibrium in the full system depends on the degree of decay of the stochastic perturbation. In particular, if $p=2$, the stability conditions seem to be the same as in the previous case when $c_1=0$ (see Fig.~\ref{Fig12}, b). However, if $p=1$, the stability of the equilibrium $(0,0)$ changes as the parameter $b_0$ passes through a certain non-zero critical value $b_\ast$ (see Fig.~\ref{Fig12}, c). It will be shown in Section~\ref{SEx} that such shift in the stability boundary arises due to resonance capture in the stochastic system and its value depends, in particular, on the parameter $s_0$. 

\begin{figure}
\centering
\subfigure[$c_1=0$]{
\includegraphics[width=0.3\linewidth]{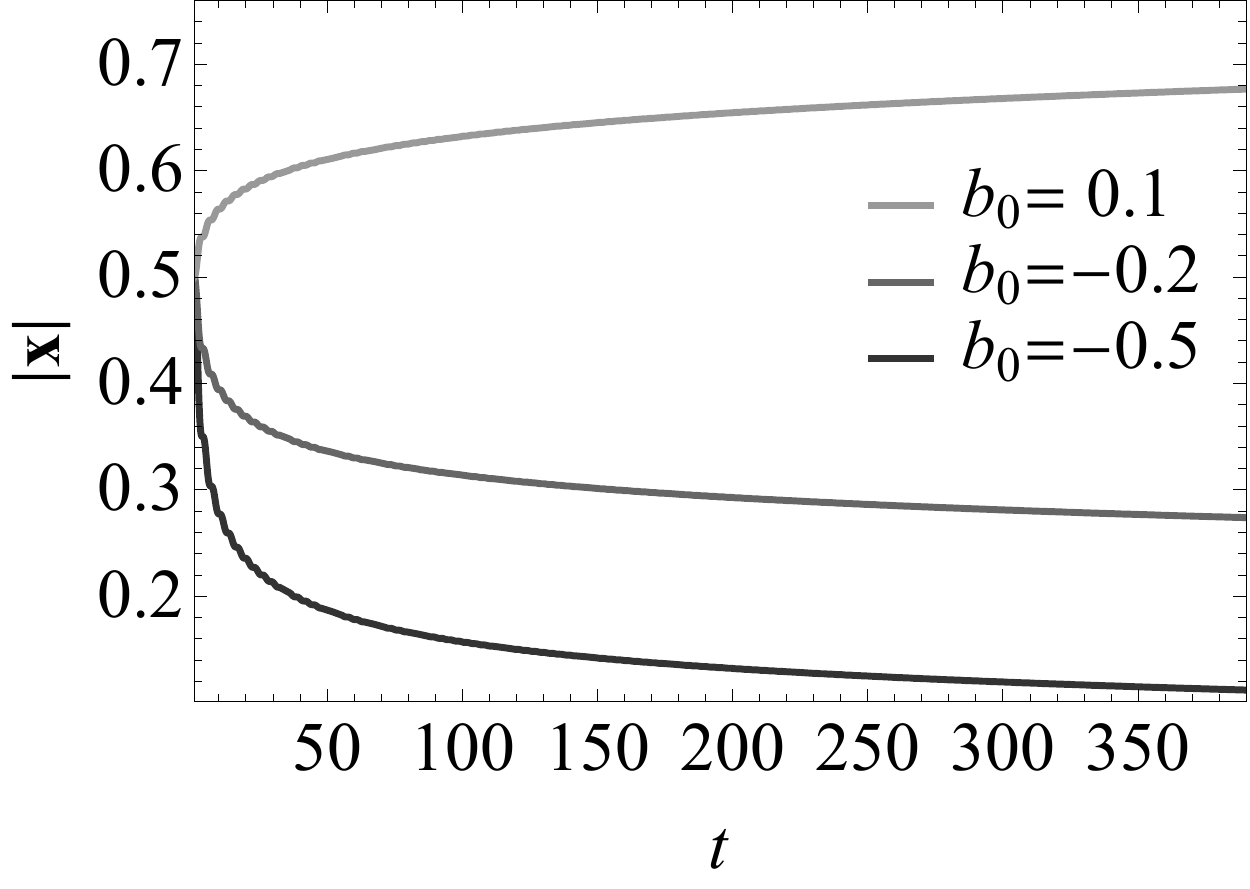}
}
\hspace{1ex}
 \subfigure[$c_1=1$, $p=2$]{
 \includegraphics[width=0.3\linewidth]{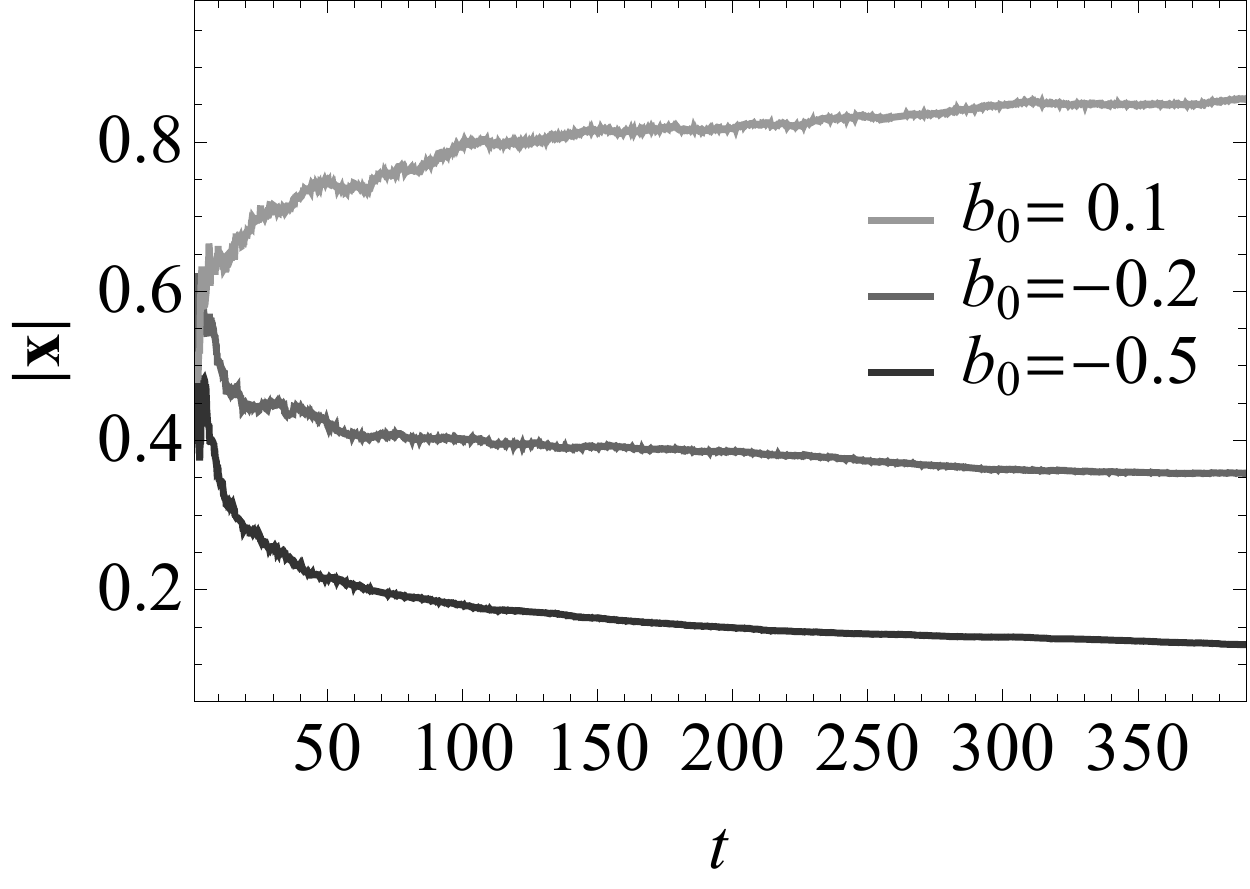}
}
\hspace{1ex}
\subfigure[$c_1=1$, $p=1$]{
 \includegraphics[width=0.3\linewidth]{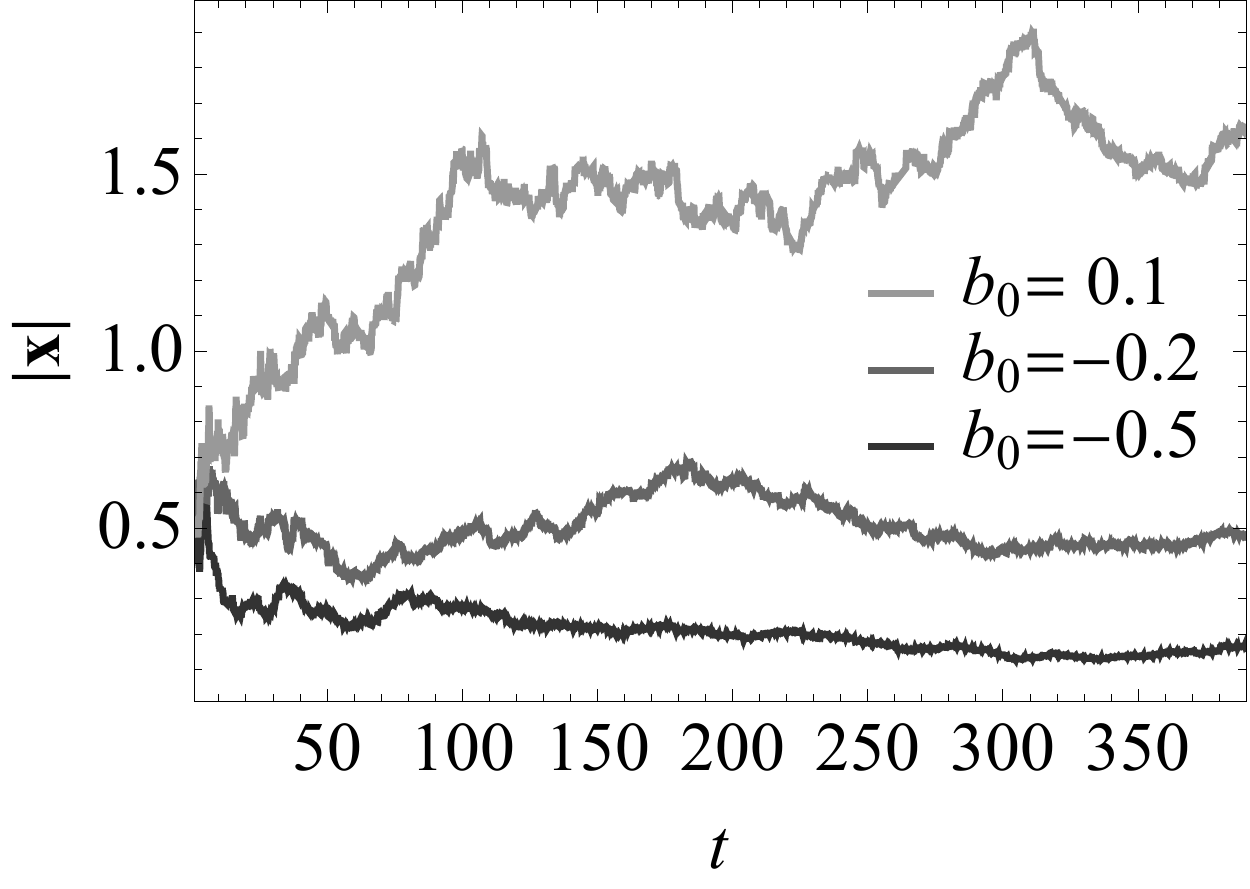}
}
\caption{\small The evolution of $|{\bf x}(t)|$ for sample paths of solutions to system \eqref{Ex0} with different values of the parameters.} \label{Fig12}
\end{figure}

\section{Main results}
\label{MR}
Let  ${\bf x}_\ast(t)\equiv (\xi_{1}(t,E), \xi_{2}(t,E))^T$ be a $T(E)$-periodic solution of the limiting system \eqref{LimSys} such that $H(\xi_{1}(t,E), \xi_{2}(t,E))\equiv E$, $\xi_1(0,E)>0$ and $\xi_2(0,E)= 0$ for all $E\in (0,E_0]$. Define the domain 
\begin{gather*}
\mathcal D(E_0):=\{(x_1,x_2)\in\mathcal B_0: H(x_1,x_2)\leq E_0\}.
\end{gather*}
Then we have the following:
\begin{Th}\label{Th1}
Let system \eqref{FulSys} satisfy assumptions \eqref{zero}, \eqref{lip}, \eqref{H0as}, \eqref{HFBas}, \eqref{Sform} and \eqref{rescond}. Then, for all $N\in \mathbb Z_+$ and $\epsilon>0$ there exist $\ell\in \mathbb Z_+\cup\{0\}$, $t_\ast\geq \max\{t_0,1\}$, $v_0>0$ and a chain of invertible transformations $(x_1,x_2)\to (E,\varphi)\to (R,\theta)\to (v,\psi)$,
\begin{gather}
\label{exch1} x_1(t)= \xi_1\left(\frac{\varphi(t)}{\nu(E(t))},E(t)\right), \quad x_2(t)= \xi_2\left(\frac{\varphi(t)}{\nu(E(t))},E(t)\right), \\
\label{Etheta} E(t)=t^{-\frac{2\ell}{q}} \left(R(t)\right)^2, \quad \varphi(t)=\varkappa^{-1}S(t)+\theta(t), \\
 \label{exch11}
v(t)=R(t)+\tilde{v}_N (R(t),\theta(t),t), \quad  \psi(t)=\theta(t)+\tilde{\psi}_N(R(t),\theta(t),t),
\end{gather}
where the functions $\tilde{v}_N(R,\theta,t)$, $\tilde{\psi}_N(R,\theta,t)$ are $2\pi$-periodic in $\theta$ and satisfy the inequalities 
\begin{gather*}
|\tilde{v}_N(R,\theta,t)|\leq \epsilon R,\quad 
|\tilde{\psi}_N(R,\theta,t)|\leq \epsilon \quad 
\forall\, R\in [0,E_0^{1/2} t_\ast^{\ell/q}], \quad  \theta\in\mathbb R, \quad  t\geq t_\ast,
\end{gather*}
such that for all $(x_1,x_2)\in \mathcal D(E_0)$ and $t\geq t_\ast$ system \eqref{FulSys} can be transformed into
\begin{gather}\label{EQs}
d {\bf z}(t) = {\bf g}_N({\bf z}(t),t) \, dt + {\bf G}_N ({\bf z}(t),t)\, d{\bf w}(t), \quad {\bf z}=(v,\psi)^T
\end{gather}
with ${\bf g}_N({\bf z},t)\equiv (\Lambda_N(v,\psi,t), \Omega_N(v,\psi,t))^T$ and
${\bf G}_N({\bf z},t)\equiv \{\sigma_{i,j}(v,\psi,t)\}_{2\times 2}$, defined for all $v\in [0,v_0]$, $\psi\in\mathbb R$ and $t\geq t_\ast$, such that
\begin{equation}\label{LO}
\begin{array}{l}
\displaystyle
\Lambda_N(v,\psi,t)\equiv\sum_{k=1}^N t^{-\frac{k}{ q}}\Lambda_k(v,\psi)+\tilde\Lambda_N(v,\psi,t), \\ 
\displaystyle \Omega_N(v,\psi,t)\equiv \sum_{k=1}^N t^{-\frac{k}{ q}}\Omega_k(v,\psi)+\tilde\Omega_N(v,\psi,t),
\end{array}
\end{equation}
where 
\begin{gather*}
\tilde\Lambda_N(v,\psi,t)=\mathcal O(t^{-\frac{N+1}{q}}), \quad  
\tilde\Omega_N(v,\psi,t)=\mathcal O(t^{-\frac{N+1}{q}}), \quad  
\sigma_{i,j}(v,\psi,t)=\mathcal O(t^{-\frac{1}{q}})
\end{gather*} 
as $t\to\infty$ uniformly for all $v\in [0,v_0]$ and $\psi\in\mathbb R$. The functions $\Lambda_N(v,\psi,t)$, $\Omega_N(v,\psi,t)$, $\sigma_{i,j}(v,\psi,t)$ are $2\pi$-periodic in $\psi$ and satisfy the following estimates:
\begin{gather*}
	\Lambda_N(v,\psi,t)=\mathcal O(v), \quad 
	\Omega_N(v,\psi,t)=\mathcal O(1), \quad 
	\sigma_{1,j}(v,\psi,t)=\mathcal O(v), \quad 
	\sigma_{2,j}(v,\psi,t)=\mathcal O(1)
\end{gather*}
as $v\to 0$ uniformly for all $\psi\in\mathbb R$ and $t\geq t_\ast$.
\end{Th}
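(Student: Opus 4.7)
The plan is to produce the required normal form by composing three changes of variables and applying It\^o's formula at each step, with the third step being a stochastic averaging transformation built as a truncated asymptotic series in $t^{-1/q}$.

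First I would pass to action-angle-type coordinates $(E,\varphi)$ via \eqref{exch1}. Since ${\bf a}_0$ is Hamiltonian with respect to $H$, the level curves in $\mathcal D(E_0)$ are closed orbits of period $T(E)=2\pi/\nu(E)$, so $\varphi$ is well-defined modulo $2\pi$ as the scaled time-on-orbit. Applying It\^o's formula to $E(t)=H(x_1(t),x_2(t))$ gives a drift $\nabla H\cdot({\bf a}-{\bf a}_0)+\tfrac12\mathrm{tr}({\bf A}^T\nabla^2 H\,{\bf A})$ and a diffusion $(\nabla H)^T{\bf A}$, both small by \eqref{HFBas}. Likewise the equation for $\varphi$ has drift $\nu(E)$ plus small perturbative terms. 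Using \eqref{HFBas} and \eqref{Sform}, all coefficients admit asymptotic expansions in $t^{-1/q}$ whose entries are smooth in $E$ and $2\pi$-periodic in both $\varphi$ (via the parametrization $\xi_i$) and $S(t)$.

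Next I would combine the resonance rotation $\varphi=\varkappa^{-1}S(t)+\theta$ with the scaling $E=t^{-2\ell/q}R^2$. The rotation exploits \eqref{rescond}: since $\dot\varphi=\nu(E)=\nu(0)+\mathcal O(E)$ and $\dot S/\varkappa=\nu(0)+\mathcal O(t^{-1/q})$, the new phase $\theta$ is slow, and the oscillatory dependence on $S(t)$ is absorbed because the coefficients are $2\pi$-periodic in $S$ and become functions of $\varkappa\theta$ after substitution. The integer $\ell$ is then chosen as the smallest nonnegative value at which the leading nontrivial drift in $R$ coming from the averaged resonant contribution of the perturbation is balanced; if this already occurs at zeroth order, then $\ell=0$. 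The third transformation, yielding $(v,\psi)$, is the averaging step. I would seek $\tilde v_N$ and $\tilde\psi_N$ as polynomials
\begin{gather*}
\tilde v_N(R,\theta,t)=\sum_{k=1}^N t^{-k/q}u_k(R,\theta),\qquad \tilde\psi_N(R,\theta,t)=\sum_{k=1}^N t^{-k/q}\chi_k(R,\theta),
\end{gather*}
with $u_k,\chi_k$ smooth, $2\pi$-periodic in $\theta$, satisfying $u_k=\mathcal O(R)$, $\chi_k=\mathcal O(1)$, and determine them order by order. Substituting into the $(R,\theta)$-system and applying It\^o's formula (whose extra drift contribution from ${\bf A}{\bf A}^T$ starts at order $t^{-2/q}$ since ${\bf A}=\mathcal O(t^{-1/q})$), at each order $k$ one obtains cohomological equations of the form $\nu(0)\partial_\theta u_k=F_k(R,\theta)-\Lambda_k(R,\theta)$ and similarly for $\chi_k$, where $F_k$ is built from already-computed lower-order data. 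Setting $\Lambda_k,\Omega_k$ equal to the $\theta$-averages of the corresponding right-hand sides makes the residuals zero-mean, and since $\nu(0)\neq 0$ the equations are uniquely solvable among $2\pi$-periodic functions with zero mean. The factor $t^{-1/q}$ in the diffusion of the $(v,\psi)$-system arises directly from the scaling $E=t^{-2\ell/q}R^2$ combined with the hypothesis ${\bf A}=\mathcal O(t^{-1/q})$.

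The main obstacle will be controlling the remainders and verifying invertibility of the near-identity map in a full neighbourhood $v\in[0,v_0]$, $\psi\in\mathbb R$, $t\geq t_\ast$. This is handled by enlarging $t_\ast$ and shrinking $v_0$ so that $|\tilde v_N|\leq\epsilon R$ and $|\tilde\psi_N|\leq\epsilon$, then inverting $v=R+\tilde v_N$, $\psi=\theta+\tilde\psi_N$ via the implicit function theorem; the estimates $\Lambda_N=\mathcal O(v)$ and $\sigma_{1,j}=\mathcal O(v)$ come from the vanishing \eqref{zero} at the origin propagated through the chain. Bounding the truncation errors $\tilde\Lambda_N,\tilde\Omega_N=\mathcal O(t^{-(N+1)/q})$ uniformly then follows from the asymptotic character of \eqref{HFBas}, the smoothness of ${\bf a}_k,{\bf A}_k$, and the Lipschitz bound \eqref{lip}. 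The combinatorial bookkeeping of the It\^o corrections at each order is tedious but mechanical once the cohomological structure above is established.
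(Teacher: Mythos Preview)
Your overall three-step architecture matches the paper, but the averaging step contains a genuine error. You assert that after the rotation $\varphi=\varkappa^{-1}S(t)+\theta$ ``the oscillatory dependence on $S(t)$ is absorbed'' and the coefficients ``become functions of $\varkappa\theta$''. This is false: the coefficients $b_{i,k}(E,\varphi,S)$ are $2\pi$-periodic in $\varphi$ \emph{and} separately $2\pi$-periodic in $S$, so after the substitution they become functions of $(R,\theta,S)$ that are $2\pi$-periodic in $\theta$ and $2\pi\varkappa$-periodic in $S$. The explicit fast dependence on $S(t)$ survives.

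Consequently your cohomological equation $\nu(0)\,\partial_\theta u_k=F_k-\Lambda_k$ cannot arise. You correctly noted that $\theta$ is slow: $\dot\theta=\nu(E)-\varkappa^{-1}S'(t)=\mathcal O(E)+\mathcal O(t^{-1/q})$ by \eqref{rescond}, so there is no $\nu(0)\,\partial_\theta$ term available to invert. The only fast variable left is $S(t)$ itself, with $\dot S\sim s_0\neq 0$. The correction functions must therefore be taken to depend on $S$ as well, $\tilde v_N=\sum_k t^{-k/q}v_k(R,\theta,S(t))$, and the cohomological equations at each order read
\[
s_0\,\partial_S v_k = \Lambda_k(R,\theta)-f_{1,k}(R,\theta,S)+\tilde f_{1,k}(R,\theta,S),
\]
and similarly for $\psi_k$. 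One then defines $\Lambda_k,\Omega_k$ as the $S$-averages over a period $2\pi\varkappa$ of the right-hand sides, making the residuals mean-zero in $S$ and hence integrable to $2\pi\varkappa$-periodic $v_k,\psi_k$. With this correction your scheme goes through; the remaining estimates (invertibility for large $t_\ast$, $\mathcal O(v)$ behaviour, remainder bounds) are as you described.
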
 
The proof is based on the averaging of the system with respect to $S(t)$ and is contained in Section~\ref{ChOfVar}.

Thus, the transformation described in Theorem~\ref{Th1} simplifies the first $N$ asymptotic terms of system \eqref{FulSys} as $t\to\infty$. In this case some leading terms in the transformed system may disappear due to the zero mean. Let $n\geq 1$ and $m\geq 1$ denote the integers associated with the first non-zero terms in \eqref{LO}:
\begin{equation}\label{LnOm}
\begin{array}{ lclcl}
\Lambda_i(v,\psi)\equiv 0,&\quad&  \forall\, i<n, &\quad &  \Lambda_n(v,\psi)\not\equiv 0,\\
\Omega_j(v,\psi)\equiv 0,&\quad & \forall\, j<m, &\quad& \Omega_m(v,\psi)\not\equiv 0.
\end{array}
\end{equation}

It is clear that the behaviour of solutions to system \eqref{FulSys} in the vicinity of the equilibrium ${\bf x}(t)\equiv 0$ is determined by the dynamics of transformed system \eqref{EQs} with $v(t)$ close to zero.  
With this in mind, we consider the following additional assumptions (separately) on the behaviour of the function $\Lambda_N(v,\psi,t)$ as $v\to 0$:
\begin{gather}\label{Lnas1}
 \Lambda_n(v,\psi)=v\left(\lambda_n(\psi)+\mathcal O(v)\right), \quad v\to 0;
\end{gather}
and 
\begin{gather}\label{Lnas2}
\exists\, h\geq 2,\, l\geq 1: \quad 
 \Lambda_k(v,\psi)=
\begin{cases}
	\displaystyle v^h\left(\lambda_{k,h}(\psi)+\mathcal O(v)\right), &  k<n+l,\\
	\displaystyle v\left(\lambda_{k}(\psi)+\mathcal O( v)\right), & k\geq n+l,
\end{cases}\quad v\to 0,
\end{gather}
where
$\lambda_n(\psi)$, $\lambda_{n,h}(\psi)$, $\lambda_{n+l}(\psi)$ are non-zero $2\pi$-periodic functions. It follows easily that in the case of \eqref{Lnas1} the leading asymptotic term of $\Lambda_N(v,\psi,t)$ as $t\to\infty$ has non-zero linear part with respect to $v$. In the case of \eqref{Lnas2} the leading term is strictly nonlinear.

Let us also specify the behaviour of the function $\Omega_N(v,\psi,t)$ as $v\to 0$. We assume that
\begin{gather}\label{Omas}
 \Omega_m(v,\psi)=
 	\omega_{m,0}(\psi)+\omega_{m,1}(\psi)v+\mathcal O(v^2),   \quad v\to 0,
\end{gather}
where $\omega_{m,0}(\psi)$ is a $2\pi$-periodic function. Note that by appropriate choosing $\ell\in\mathbb Z_{+}\cup\{0\}$ in \eqref{Etheta}, we can assume without loss of generality that $\omega_{m,0}(\psi)\not\equiv 0$. 
Moreover, consider the following two cases:
\begin{gather}
\label{zerom} \exists\, \phi_0\in\mathbb R: \quad \omega_{m,0}(\phi_0)=0, \quad \vartheta_{m}:=\omega_{m,0}'(\phi_0)\neq 0;\\
\label{nzerom} \omega_{m,0}(\psi)\neq 0 \quad \forall \, \psi\in\mathbb R.
\end{gather}
Each of these cases corresponds to a specific asymptotic regime associated with the long-term behaviour of the phase of solutions to system \eqref{FulSys}. Indeed, consider the axillary system of ordinary differential equations, which is obtained from system \eqref{EQs} by dropping the stochastic part
\begin{gather}\label{DetSys}
\frac{du}{dt}=\Lambda_N(u,\phi,t), \quad \frac{d\phi}{dt}=\Omega_N(u,\phi,t), \quad t\geq t_\ast
\end{gather}
with $N= \max\{n,m\}$. We have the following:
\begin{Lem}\label{Lem1}
Let assumptions \eqref{LnOm}, \eqref{Omas} and \eqref{zerom} hold with $m\leq q$. If $\vartheta_m<0$, then for all $\varepsilon>0$ system \eqref{DetSys} has a particular solution $u_\varepsilon(t)\equiv 0$, $\phi_\varepsilon(t)\equiv \phi_0+\varepsilon\phi_1(t)$ such that 
\begin{gather}\label{zetaas}
\phi_1(t)=\begin{cases} \mathcal O(1), & m<q,\\
\mathcal O(t^{-c}), & m=q,
\end{cases}
\end{gather}
as $t\to\infty$ with some $c>0$.
\end{Lem}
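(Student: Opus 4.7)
The strategy is to seek the desired particular solution inside the invariant manifold $\{u=0\}$ and to reduce the problem to a scalar phase equation possessing an attracting pseudo-equilibrium near $\phi_0$.

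Since Theorem~\ref{Th1} gives $\Lambda_N(v,\psi,t)=\mathcal O(v)$ as $v\to 0$, the first equation of \eqref{DetSys} is trivially satisfied by $u(t)\equiv 0$, and the problem reduces to the scalar ODE $\phi'=\Omega_N(0,\phi,t)$. Setting $\eta:=\phi-\phi_0$ and using \eqref{LO} together with \eqref{LnOm}, \eqref{Omas} and $\omega_{m,0}(\phi_0)=0$, one obtains
\begin{equation*}
\frac{d\eta}{dt}=\vartheta_m t^{-m/q}\eta+f(\eta,t),
\end{equation*}
where $f(\eta,t)=\mathcal O(\eta^2 t^{-m/q})+\mathcal O(\eta\, t^{-(m+1)/q})+b(t)$ and the free term $b(t)=f(0,t)=\mathcal O(t^{-(m+1)/q})$ collects the subleading contributions from $\Omega_k(0,\phi_0)$ with $k>m$ together with the remainder $\tilde\Omega_N(0,\phi_0,t)$.

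Since $\vartheta_m<0$ and $m\leq q$, the linear homogeneous equation $\eta'=\vartheta_m t^{-m/q}\eta$ is contracting: its fundamental solution $\mu(t,s)=\exp\bigl(\vartheta_m\int_s^t r^{-m/q}\,dr\bigr)$ equals $\exp\bigl(\tfrac{q\vartheta_m}{q-m}(t^{1-m/q}-s^{1-m/q})\bigr)$ when $m<q$ and $(t/s)^{\vartheta_m}$ when $m=q$, and satisfies $0<\mu(t,s)\leq 1$ for $t_\ast\leq s\leq t$. Rewriting the ODE as the integral equation $\eta(t)=\int_{t_\ast}^t\mu(t,s)\,f(\eta(s),s)\,ds$ and combining a direct estimate of the convolution $\int_{t_\ast}^t\mu(t,s)\,s^{-(m+1)/q}\,ds$ with a Banach fixed-point argument in a weighted space of the form $\{\eta\in C([t_\ast,\infty)):\sup_{t\geq t_\ast}t^{\alpha}|\eta(t)|\leq K\}$, one obtains, for $t_\ast$ sufficiently large and $K$ appropriately chosen, a unique fixed point $\eta(t)$ that satisfies $|\eta(t)|=\mathcal O(t^{-1/q})$ when $m<q$, and $|\eta(t)|=\mathcal O(t^{-c})$ for some $c>0$ when $m=q$. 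Given any $\varepsilon>0$, enlarge $t_\ast$ so that $|\eta(t)|\leq\varepsilon$ for $t\geq t_\ast$; then setting $\phi_\varepsilon(t):=\phi_0+\eta(t)$ and $\phi_1(t):=\eta(t)/\varepsilon$ produces exactly the bounds \eqref{zetaas}, and $(0,\phi_\varepsilon(t))$ is the required particular solution.

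The main technical obstacle is this contraction argument: the weight $\alpha$ must be chosen so as to reproduce the natural balance $|\vartheta_m|\,t^{-m/q}|\eta|\sim t^{-(m+1)/q}$ dictated by variation of parameters (which gives $\alpha=1/q$ when $m<q$), and $t_\ast$ has to be taken large enough that the quadratic term $\mathcal O(\eta^2 t^{-m/q})$ and the subleading linear perturbation $\mathcal O(\eta\, t^{-(m+1)/q})$ produce a Lipschitz constant strictly below one after being convolved against $\mu$. In the case $m=q$, the exponent $\alpha$ must additionally respect the competition between the homogeneous decay rate $\vartheta_m$ and the forcing scale $t^{-1-1/q}$, which is what forces the existence of a positive $c$ but generally prevents uniform control of its value.
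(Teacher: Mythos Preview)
Your proof is correct but takes a genuinely different route from the paper. After the common reduction to the scalar equation for $\eta=\phi-\phi_0$ on the invariant set $\{u=0\}$, the paper proceeds by an elementary differential-inequality argument: it derives
\[
\frac{d|\eta|}{dt}\leq t^{-m/q}\Bigl(-\tfrac{|\vartheta_m|}{2}\,|\eta|+K_0\,t^{-1/q}\Bigr)
\]
on a small neighbourhood of $0$, uses this to trap any solution with $|\eta(t_\varepsilon)|\leq\delta_\varepsilon$ inside $|\eta|\leq\varepsilon$ for all $t\geq t_\varepsilon$, and in the case $m=q$ integrates the inequality directly to obtain the power decay with $c=\min\{|\vartheta_m|,q^{-1}\}/2$. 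You instead recast the problem as a fixed-point equation for the variation-of-parameters integral operator and run a Banach contraction in a weighted $\sup$-norm space. Your approach is more constructive and in fact yields a sharper bound in the case $m<q$ (you obtain $\eta(t)=\mathcal O(t^{-1/q})$ rather than merely $\mathcal O(1)$), at the cost of having to set up and verify the contraction estimates; the paper's Lyapunov-style argument sidesteps that machinery entirely and gets away with a two-line trapping argument.
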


\begin{Lem}\label{Lem2}
Let assumptions \eqref{LnOm}, \eqref{Omas} and \eqref{nzerom} hold with $m\leq q$. Then system \eqref{DetSys} has a one-parameter family of solutions such that $u(t)\equiv 0$ and $|\phi(t)|\to\infty$ as $t\to\infty$.
\end{Lem}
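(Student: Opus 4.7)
The plan is to use the estimate $\Lambda_N(v,\psi,t) = \mathcal{O}(v)$ from Theorem~\ref{Th1} to exhibit the line $u = 0$ as an invariant manifold of \eqref{DetSys}, thereby reducing the problem to a scalar non-autonomous ODE for $\phi$. Indeed, the uniform bound forces $\Lambda_N(0,\psi,t) \equiv 0$, so $u(t) \equiv 0$ solves the first equation of \eqref{DetSys} identically for every choice of $\phi(t)$. On this invariant line the system reduces to
\begin{equation*}
\frac{d\phi}{dt} = \Omega_N(0,\phi,t), \qquad \phi(t_\ast) = \phi_0 \in \mathbb{R},
\end{equation*}
whose right-hand side is bounded uniformly in $\phi$ for each fixed $t \geq t_\ast$, so the Cauchy problem admits a unique global solution. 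Varying $\phi_0 \in \mathbb{R}$ produces the required one-parameter family of solutions $(u(t),\phi(t)) = (0,\phi(t;\phi_0))$; only the divergence $|\phi(t)| \to \infty$ remains to be established.

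By the expansion \eqref{LO} together with the definition \eqref{LnOm} of $m$,
\begin{equation*}
\Omega_N(0,\phi,t) = t^{-m/q} \omega_{m,0}(\phi) + \mathcal{O}(t^{-(m+1)/q}),
\end{equation*}
uniformly in $\phi \in \mathbb{R}$. Under \eqref{nzerom} the continuous $2\pi$-periodic function $\omega_{m,0}$ has constant sign and, by compactness, is bounded away from zero: there exists $c_0 > 0$ with $|\omega_{m,0}(\phi)| \geq c_0$ for all $\phi$. Assuming $\omega_{m,0} \geq c_0$ (the opposite case being symmetric), one chooses $t_1 \geq t_\ast$ so large that the remainder is dominated by $(c_0/2)\,t^{-m/q}$, yielding $d\phi/dt \geq (c_0/2)\,t^{-m/q}$ for $t \geq t_1$. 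Since $m \leq q$, the integral $\int_{t_1}^\infty t^{-m/q} \, dt$ diverges (as a power when $m < q$ and logarithmically when $m = q$), so $\phi(t) \to +\infty$; in the opposite sign case one obtains $\phi(t) \to -\infty$.

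The entire argument is essentially a direct consequence of identifying $u = 0$ as an invariant manifold of the deterministic truncation; the only point requiring some care is the uniform control of the higher-order drift corrections in $\Omega_N(0,\phi,t)$ by the leading term $t^{-m/q}\,\omega_{m,0}(\phi)$, and this in turn follows at once from the uniform asymptotic estimates guaranteed by Theorem~\ref{Th1}.
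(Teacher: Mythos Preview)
Your proof is correct and follows essentially the same route as the paper: identify $u=0$ as invariant via $\Lambda_N=\mathcal O(u)$, reduce to the scalar equation $\dot\phi=\Omega_N(0,\phi,t)$, bound $\omega_{m,0}$ away from zero by periodicity and continuity, absorb the $\mathcal O(t^{-(m+1)/q})$ remainder into the leading term for large $t$, and integrate the resulting one-sided inequality using the divergence of $\int t^{-m/q}\,dt$ for $m\leq q$. Your write-up is in fact a bit more explicit than the paper's (global existence of the scalar flow, the role of the hypothesis $m\leq q$), but the argument is the same.
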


The justification of Lemmas~\ref{Lem1} and~\ref{Lem2} is contained in Section~\ref{AR}.

First, consider the case of phase locking. 
Let 
\begin{gather} \label{HPhi}
	E=H(x_1,x_2),  \quad \varphi=\Phi(x_1,x_2) 
\end{gather}
for all $(x_1,x_2)\in\mathcal D(E_0)$ denote the inverse transformation to \eqref{exch1}.
Define the function 
\begin{gather*}
d({\bf x},t)\equiv \sqrt{t^{\frac{2\ell}{q}}H(x_1,x_2)+|\Phi(x_1,x_2)-\varkappa^{-1}S(t)-\phi_\varepsilon(t)|^2},
\end{gather*} 
where $\phi_\varepsilon(t)$ is the function defined in Lemma~\ref{Lem1}, and introduce the following additional assumption on the class of perturbations:
\begin{gather}
	\label{classPert}
		\exists\, \mu>0: \quad \left|{\hbox{\rm tr}}({\bf A}^T({\bf x},t){\bf A}({\bf x},t))\right|\leq \mu^2 t^{-\frac{2p}{q}}|{\bf x}|^2
\end{gather}
for all ${\bf x}\in \mathcal B_0 $ and $t\geq t_0$ with some $p\in \mathbb Z_{+}$.
Then we have
\begin{Th}\label{Th2}
Let system \eqref{FulSys} satisfy \eqref{zero}, \eqref{lip}, \eqref{H0as}, \eqref{HFBas}, \eqref{Sform}, \eqref{rescond} and assumptions \eqref{LnOm}, \eqref{Lnas1}, \eqref{Omas}, \eqref{zerom}, \eqref{classPert} hold  with $1\leq m\leq q$ and $\vartheta_m<0$. 
 If $\lambda_{n}(\phi_0)<0$, then for all $\varepsilon_1>0$ and $\varepsilon_2>0$ there exist $\delta>0$ and $t_s>0$ such that the solution ${\bf x}(t)$ of system \eqref{FulSys}
with initial data ${\bf x}(t_s)={\bf x}_0$, $d({\bf x}_0,t_s)< \delta$ satisfies
\begin{gather}\label{def1}
\mathbb P\left(\sup_{ 0\leq t-t_s\leq \mathcal T} \left\{t^{-\varsigma} |d({\bf x}(t),t)|\right\}>\varepsilon_1\right)<\varepsilon_2,
\end{gather}
where 
\begin{gather}\label{mathcalT}
\varsigma=
\begin{cases}
0, & n\leq m,\\
\frac{n-m}{2q}, & n> m,
\end{cases} \qquad 
\mathcal T=\begin{cases} 
C_0 \delta^2 \mu^{-2}, & 2p<q,\\
t_s (e^{C_0\delta^2 \mu^{-2}}-1), & 2p=q,\\
\infty, & 2p>q
\end{cases}
\end{gather}
 with some $C_0={\hbox{\rm const}}>0$.
\end{Th}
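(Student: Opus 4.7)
The plan is to combine the normal-form reduction of Theorem~\ref{Th1} with the phase-locked orbit of Lemma~\ref{Lem1} and then run a stochastic Lyapunov-function argument on the resulting shifted system. Concretely, I would apply Theorem~\ref{Th1} with $N=\max\{n,m\}$ to rewrite \eqref{FulSys} as \eqref{EQs}, and set $\chi(t)=\psi(t)-\phi_\varepsilon(t)$. Since $\dot\phi_\varepsilon(t)=\Omega_N(0,\phi_\varepsilon(t),t)$, Taylor expansion of $\Lambda_N$ and $\Omega_N-\dot\phi_\varepsilon$ at $(v,\chi)=(0,0)$, combined with \eqref{Lnas1}, \eqref{Omas}, \eqref{zerom}, gives
\begin{align*}
\Lambda_N(v,\phi_\varepsilon+\chi,t)&=t^{-n/q}\lambda_n(\phi_0)\,v+\mathcal R_1(v,\chi,t),\\
\Omega_N(v,\phi_\varepsilon+\chi,t)-\dot\phi_\varepsilon(t)&=t^{-m/q}\bigl(\vartheta_m\chi+\omega_{m,1}(\phi_0)v\bigr)+\mathcal R_2(v,\chi,t),
\end{align*}
with both leading coefficients strictly negative and remainders of higher order in $(v,\chi)$ or in $1/t$. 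Propagating \eqref{classPert} through \eqref{exch1}--\eqref{exch11} also yields the near-origin diffusion bounds $\sigma_{1,j}^{2}=\mathcal O(t^{-2p/q}v^{2})$ and $\sigma_{2,j}^{2}=\mathcal O(t^{-2p/q})$.

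Next I would introduce a positive-definite quadratic Lyapunov function
\begin{gather*}
V(v,\chi,t)=t^{-2\varsigma}\bigl(\alpha v^{2}+2\beta v\chi+\gamma\chi^{2}\bigr),
\end{gather*}
with constants $\alpha,\beta,\gamma$ obtained by solving the algebraic Lyapunov equation for the linear drift matrix above (equivalently, by diagonalizing via $\eta=\chi+(\omega_{m,1}(\phi_0)/\vartheta_m)v$ and taking $V=t^{-2\varsigma}(v^{2}+\eta^{2})$). The weight $t^{-2\varsigma}$ is trivial when $n\leq m$ and, when $n>m$, balances the slow damping $t^{-n/q}$ of $v$ against the coupling $t^{-m/q}\omega_{m,1}(\phi_0)v$ forcing $\chi$; this is the origin of the factor $t^{\varsigma}$ in \eqref{def1}. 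In both cases $V\asymp t^{-2\varsigma}(v^{2}+\chi^{2})$ near the origin. Computing the It\^o generator, applying Young's inequality on the residual cross-term and the remainders $\mathcal R_1,\mathcal R_2$, and using the diffusion bounds above, I expect the key pointwise estimate
\begin{gather*}
\mathcal L V(v,\chi,t)\leq -\kappa(t)V(v,\chi,t)+\mu^{2}K(t),\qquad K(t)=\mathcal O(t^{-2p/q}),
\end{gather*}
valid on a fixed neighborhood of the origin and for $t$ sufficiently large, with $\kappa(t)\geq 0$ encoding the negative-definite leading drift and $\mu^{2}K(t)$ the unavoidable additive noise from $\sigma_{2,j}(0,\chi,t)$.

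The probability estimate then follows from a stopping-time/supermartingale argument in the spirit of \cite{HK67,RKh12}. With $\tau_R=\inf\{t\geq t_s:V(v(t),\chi(t),t)\geq\varepsilon_1^{2}\}$, the generator bound together with It\^o's formula shows that the process $V(v(t\wedge\tau_R),\chi(t\wedge\tau_R),t\wedge\tau_R)-\mu^{2}\int_{t_s}^{t\wedge\tau_R}K(\tau)\,d\tau$ is a bounded-below supermartingale, and the corresponding maximal inequality gives
\begin{gather*}
\mathbb P\Bigl(\sup_{t_s\leq t\leq t_s+\mathcal T}V(v(t),\chi(t),t)>\varepsilon_1^{2}\Bigr)\leq\frac{V(v(t_s),\chi(t_s),t_s)+\mu^{2}\int_{t_s}^{t_s+\mathcal T}K(\tau)\,d\tau}{\varepsilon_1^{2}}.
\end{gather*}
For $\delta$ small and $t_s$ large the first term is negligible, and the three values of $\mathcal T$ in \eqref{mathcalT} arise as the maximal horizons on which the noise integral $\int_{t_s}^{t_s+\mathcal T}\tau^{-2p/q}\,d\tau$ stays below a fixed multiple of $\varepsilon_1^{2}\mu^{-2}$: it grows polynomially when $2p<q$, logarithmically when $2p=q$, and is bounded when $2p>q$. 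Inverting \eqref{exch1}--\eqref{exch11} and using $V^{1/2}\asymp t^{-\varsigma}d({\bf x},t)$ near the origin transfers this bound to \eqref{def1}.

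The main technical difficulty I foresee is the construction of $V$ in the regime $n>m$, where the cross-coupling $t^{-m/q}\omega_{m,1}(\phi_0)v\chi$ is of the same order as the damping of $\chi$ but only of lower order than the damping of $v$, and thus cannot be absorbed by an unweighted sum of squares; the weight $t^{-2\varsigma}$ (equivalently, the linear shift into $\eta$-coordinates) is what repairs this imbalance. A secondary issue is the propagation of \eqref{classPert} through the four-step change of variables of Theorem~\ref{Th1} together with the uniform control of the remainders $\mathcal R_1,\mathcal R_2,\tilde\Lambda_N,\tilde\Omega_N$ on the chosen neighborhood, both handled by the stopping time $\tau_R$ and by choosing $N$ large enough.
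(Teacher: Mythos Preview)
Your plan is correct and follows the paper's strategy: apply Theorem~\ref{Th1} with $N=\max\{n,m\}$, center at the phase-locked orbit of Lemma~\ref{Lem1}, control a quadratic Lyapunov function via the It\^o generator, and finish with a supermartingale maximal inequality, the three regimes for $\mathcal T$ arising from the integrability of $t^{-2p/q}$.

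The tactical details differ in two harmless ways. First, the paper folds the noise budget directly into the Lyapunov function as an additive compensator $\zeta_p(t)$ (essentially $\int_t^{t_s+\mathcal T}\tau^{-2p/q}\,d\tau$, or a cruder linear majorant when $2p<q$), so that $\mathcal L U_\ast\le 0$ and one gets a genuine supermartingale; your device of subtracting the noise integral after It\^o's formula is equivalent. Second, for $n>m$ the paper does not put the weight $t^{-2\varsigma}$ on the whole quadratic nor diagonalize: it keeps $U_1^2$ unweighted and multiplies only the phase part $(U_2-\phi_\varepsilon)^2$ by a small constant $c_\ast t^{-(n-m)/q}$, which brings the troublesome cross term $\omega_{m,1}U_1(U_2-\phi_\varepsilon)$ down to order $t^{-n/q}$ and lets a single Young inequality absorb it. Your triangular shift $\eta=\chi+(\omega_{m,1}/\vartheta_m)v$ achieves the same cancellation; note, however, that the overall weight $t^{-2\varsigma}$ alone (without the shift or a nonzero cross coefficient $\beta$) would \emph{not} suffice, since the residual $t^{-n/q}v^2$ from Young is then larger than the weighted $v$-damping $t^{-(2n-m)/q}|\lambda_n|v^2$, so your parenthetical ``equivalently, by diagonalizing'' is the essential step rather than a cosmetic reformulation.
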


We see that if $n>m$, Theorem~\ref{Th2} provides only ``weak'' stochastic stability with a decaying weight $t^{-\varsigma}$. Note also that the estimate \eqref{def1} corresponds to the stability in probability~\cite[\S 5.3]{RKh12} for the equilibrium ${\bf x}(t)\equiv 0$ and some phase dynamics. 

Let us remark that  a non-trivial solution with amplitude close to zero may appear in the strictly non-linear case \eqref{Lnas2}. This effect is due to the Hopf bifurcation in some scaled variables (see, for instance,~\cite{OS22Non}). In particular, we have the following: 
\begin{Lem}\label{Lem3}
Let assumptions \eqref{LnOm}, \eqref{Omas} and \eqref{zerom} hold with $n+l=m\leq q$. If $\lambda_{n,h}(\phi_0)<0$, $\lambda_{n+l}(\phi_0)>0$ and $\vartheta_m<0$, then for all $\varepsilon>0$ system \eqref{DetSys} has a particular solution 
\begin{gather*}
u_\varepsilon(t)\equiv t^{-\kappa} (u_0+\varepsilon u_1(t)), \quad  
\phi_\varepsilon(t)\equiv \phi_0+\varepsilon\phi_1(t)
\end{gather*} 
with $u_1(t)=\mathcal O(1)$ and $\phi_1(t)=\mathcal O(1)$ as $t\to\infty$,  where
\begin{gather}\label{kappau0}
\kappa=\frac{l}{(h-1)q}, \quad u_0=\left(\frac{\lambda_{n+l}(\phi_0)+\delta_{n+l,q}\kappa}{|\lambda_{n,h}(\phi_0)|}\right)^{\frac{1}{h-1}}.
\end{gather}
\end{Lem}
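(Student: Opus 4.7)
The plan is to introduce the ansatz $u(t) = t^{-\kappa}(u_0 + \xi(t))$, $\phi(t) = \phi_0 + \eta(t)$ suggested by \eqref{kappau0} directly in \eqref{DetSys}, and reduce the lemma to showing that the resulting reduced system for $(\xi,\eta)$ admits a solution of size $\mathcal O(\varepsilon)$ on $[t_\ast, \infty)$. The identity $(n+l)/q + \kappa = n/q + h\kappa$ built into the definition of $\kappa$ guarantees that the strictly nonlinear leading term $t^{-n/q} u^h \lambda_{n,h}(\phi)$ and the linear leading term $t^{-(n+l)/q} u\lambda_{n+l}(\phi)$ of $\Lambda_N$ have the same order $t^{-(n+l)/q-\kappa}$ in the scaled variables, and the value of $u_0$ from \eqref{kappau0} is designed so that at $(\xi,\eta) = (0,0)$ these two contributions cancel exactly. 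In the critical resonant case $n+l = q$, the Kronecker-delta correction $\delta_{n+l,q}\kappa$ inside $u_0$ is precisely what absorbs the $-\kappa t^{-\kappa-1} u_0$ produced by differentiating the prefactor $t^{-\kappa}$, so the ansatz is consistent to leading order in both regimes.

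The next step is to linearize the reduced system at the origin. A direct computation yields
\begin{equation*}
\dot\xi = t^{-m/q}\bigl[A\xi + B\eta\bigr] + \mathcal R_1(\xi,\eta,t), \qquad \dot\eta = t^{-m/q} D\eta + \mathcal R_2(\xi,\eta,t),
\end{equation*}
where $A = -(h-1)(\lambda_{n+l}(\phi_0) + \delta_{n+l,q}\kappa) < 0$ thanks to $\lambda_{n+l}(\phi_0) > 0$, $\kappa > 0$, and $h \geq 2$; $D = \vartheta_m < 0$ by hypothesis; $B$ is an explicit constant coming from the derivatives of $\lambda_{n,h}$ and $\lambda_{n+l}$ at $\phi_0$; and the remainders $\mathcal R_j$ contain the quadratic-and-higher contributions in $(\xi,\eta)$ together with terms that are explicitly smaller than $t^{-m/q}$ (coming from the intermediate orders $n < k < n+l$ in \eqref{Lnas2}, the $\mathcal O(v)$ correction in $\Omega_m$ from \eqref{Omas}, the forcing $t^{-m/q - \kappa}\omega_{m,1}(\phi_0)u_0$ entering the $\eta$-equation, and the tails $\tilde\Lambda_N$, $\tilde\Omega_N$). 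Since the upper-triangular leading matrix has strictly negative eigenvalues $A$ and $D$, one can pick $c > 0$ large enough that $V(\xi,\eta) = \xi^2 + c\eta^2$ satisfies $\dot V \leq -\mu_0 t^{-m/q} V$ along the leading linear flow, for some $\mu_0 > 0$.

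The construction is then completed by a forward-invariance argument. Including the contributions of $\mathcal R_j$, the derivative of $V$ along the trajectories of the full reduced system obeys
\begin{equation*}
\dot V \leq -\mu_0 t^{-m/q} V + C_1 t^{-m/q} V^{3/2} + C_2 t^{-m/q - \beta}\sqrt V
\end{equation*}
for some $\beta > 0$. Restricting to $V \leq \varepsilon^2$ with $\varepsilon$ small absorbs the cubic correction into the first term, and choosing $t_\ast$ large enough that $C_2 t_\ast^{-\beta} \ll \mu_0\varepsilon$ makes the ball $\{V \leq \varepsilon^2\}$ positively invariant from $t_\ast$ onward. The trajectory issued from $(\xi(t_\ast), \eta(t_\ast)) = (0,0)$ then stays in this ball for all $t \geq t_\ast$, so setting $u_1(t) = \xi(t)/\varepsilon$, $\phi_1(t) = \eta(t)/\varepsilon$ yields the required solution with $u_1, \phi_1 = \mathcal O(1)$.

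The main obstacle will be the bookkeeping in the first step: one must verify that, after the substitution $u = t^{-\kappa}(u_0 + \xi)$, every term in the asymptotic series for $\Lambda_N$ and $\Omega_N$ beyond the two matched leading contributions — including the intermediate-order strictly nonlinear terms for $n < k < n+l$, the $\mathcal O(v)$ corrections inside \eqref{Lnas2} and \eqref{Omas}, and the forcing generated by $\omega_{m,1}(\phi_0)u_0$ — is strictly lower order than the retained leading part $t^{-m/q}$, so that no additional resonance or slow-manifold bending has to be corrected for. Once this ordering is in hand, the Lyapunov estimate of the third step is standard.
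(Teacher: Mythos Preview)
Your proposal is correct and follows essentially the same route as the paper: substitute the ansatz $u=t^{-\kappa}(u_0+\xi)$, $\phi=\phi_0+\eta$, check that the choice \eqref{kappau0} kills the leading inhomogeneity, linearise to an upper-triangular system with negative eigenvalues $A=\mathcal A_1$ and $D=\vartheta_m$, and conclude by a quadratic Lyapunov function and forward invariance of a small ball. The only cosmetic difference is the Lyapunov function---the paper uses the sheared form $L=K\eta^2+(\xi+(\mathcal A_2/\mathcal A_1)\eta)^2$ that diagonalises the linear part, whereas you use $V=\xi^2+c\eta^2$ with $c$ large; both yield the same $\dot V\le t^{-m/q}(-C\rho^2+K_0t^{-\kappa_1})$ estimate, and the remainder bookkeeping you flag as the ``main obstacle'' is handled in the paper by the single exponent $\kappa_1=\min\{\kappa,q^{-1}\}$.
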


Define the function 
\begin{gather*}
\tilde d({\bf x},t)\equiv \sqrt{t^{2\kappa}\left|t^{\frac{\ell}{q}}\sqrt {H(x_1,x_2)}-u_\varepsilon(t)\right|^2+\left|\Phi(x_1,x_2)-\varkappa^{-1}S(t)-\phi_\varepsilon(t)\right|^2},
\end{gather*} 
where $u_\varepsilon(t),\phi_\varepsilon(t)$ is the solution of system \eqref{DetSys} described in Lemma~\ref{Lem3}.
Then we have
\begin{Th}\label{Th3}
Let system \eqref{FulSys} satisfy \eqref{zero}, \eqref{lip}, \eqref{H0as}, \eqref{HFBas}, \eqref{Sform}, \eqref{rescond} and assumptions \eqref{LnOm}, \eqref{Lnas2}, \eqref{Omas}, \eqref{zerom}, \eqref{classPert} hold with $1\leq m\leq q$ and $\vartheta_m<0$. 
\begin{itemize}
	\item  If $\lambda_{n,h}(\phi_0)<0$, $\lambda_{n+l}(\phi_0)<0$, then for all $\varepsilon_1>0$ and $\varepsilon_2>0$ there exist $\delta>0$ and $t_s>0$ such that the solution ${\bf x}(t)$ of system \eqref{FulSys}
with initial data ${\bf x}(t_s)={\bf x}_0$, $d({\bf x}_0,t_s)< \delta$ satisfies \eqref{def1} 
with $\mathcal T$, defined by \eqref{mathcalT}, and 
\begin{gather*}
\varsigma=
\begin{cases}
0, & n+l\leq m,\\
\frac{n+l-m}{2q}, & n+l>m.
\end{cases}
\end{gather*}
\item  If $\lambda_{n,h}(\phi_0)<0$, $\lambda_{n+l}(\phi_0)>0$ and $n+l=m$, then for all $\varepsilon_1>0$ and $\varepsilon_2>0$ there exist $\delta>0$ and $t_s>0$ such that the solution ${\bf x}(t)$ of system \eqref{FulSys}
with initial data ${\bf x}(t_s)={\bf x}_0$, $\tilde d({\bf x}_0,t_s)<\delta$ satisfies
\begin{gather}\label{deftilde}
\mathbb P\left(\sup_{ 0\leq t-t_s\leq \mathcal T}   |\tilde d({\bf x}(t),t)| >\varepsilon_1\right)<\varepsilon_2
\end{gather}
with $\mathcal T$, defined by \eqref{mathcalT}.
\end{itemize}
\end{Th}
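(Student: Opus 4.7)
The plan is to follow the same stochastic Lyapunov strategy used for Theorem~\ref{Th2}, but with the functional tailored to the strictly nonlinear behaviour \eqref{Lnas2}. First I would apply Theorem~\ref{Th1} with $N=\max\{n+l,m\}$ to transform \eqref{FulSys} into \eqref{EQs}, so that $\Lambda_N$ already contains the $v^{h}$ and $v$ leading asymptotics dictated by \eqref{Lnas2}, while the diffusion still satisfies $\sigma_{1,j}=\mathcal O(v\, t^{-p/q})$ and $\sigma_{2,j}=\mathcal O(t^{-p/q})$ by \eqref{classPert} combined with the bounds of Theorem~\ref{Th1}. The distance $d({\bf x},t)$ (respectively $\tilde d({\bf x},t)$) is, modulo the near-identity bounds \eqref{exch11}, equivalent to $\sqrt{v^{2}+(\psi-\phi_\varepsilon(t))^{2}}$ (respectively $\sqrt{r^{2}+(\psi-\phi_\varepsilon(t))^{2}}$ with $r(t)=t^{\kappa}(v(t)-u_\varepsilon(t))$).

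For the first bullet ($\lambda_{n,h}(\phi_0),\lambda_{n+l}(\phi_0)<0$) I would take $\phi_\varepsilon(t)$ from Lemma~\ref{Lem1} and try the Lyapunov candidate
\begin{gather*}
V(v,\psi,t)=t^{2\varsigma}\bigl(v^{2}+(\psi-\phi_\varepsilon(t))^{2}\bigr),
\end{gather*}
with $\varsigma=0$ if $n+l\leq m$ and $\varsigma=(n+l-m)/(2q)$ otherwise. It\^o's formula gives a generator
\begin{gather*}
\mathcal LV=2\varsigma t^{2\varsigma-1}\bigl(v^2+(\psi-\phi_\varepsilon)^2\bigr)+2t^{2\varsigma}v\Lambda_{N}+2t^{2\varsigma}(\psi-\phi_\varepsilon)(\Omega_{N}-\dot\phi_\varepsilon)+t^{2\varsigma}{\hbox{\rm tr}}({\bf G}_{N}{\bf G}_{N}^{T}).
\end{gather*}
Substituting the expansions \eqref{LO}, \eqref{Lnas2}, \eqref{Omas} and using $\lambda_{n+l}(\phi_0)<0$, $\vartheta_m<0$, $\lambda_{n,h}(\phi_0)<0$ together with the convergence $\phi_\varepsilon(t)\to\phi_0$ from \eqref{zetaas}, I would obtain on a small ball $v^2+(\psi-\phi_\varepsilon)^2\leq\rho^2$ an estimate of the form
\begin{gather*}
\mathcal LV\leq -c\,t^{-m/q}V+C\,t^{2\varsigma-2p/q},
\end{gather*}
the choice of $\varsigma$ being precisely what is needed to balance the $v$- and $\psi$-dissipation rates $t^{-(n+l)/q}$ and $t^{-m/q}$, while the spurious term $2\varsigma t^{2\varsigma-1}v^2$ is absorbed into the dissipative part thanks to $m\leq q$. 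A Khasminskii-type first-exit argument via Dynkin's formula applied to $V$ (cf.~\cite[\S5.3]{RKh12}) then yields \eqref{def1}; the three sub-cases of $\mathcal T$ in \eqref{mathcalT} arise from comparing $\int_{t_s}^{t_s+\mathcal T} t^{-2p/q}\,dt$ with $\delta^2$.

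For the second bullet ($\lambda_{n,h}(\phi_0)<0$, $\lambda_{n+l}(\phi_0)>0$, $n+l=m$) I would pass to perturbation coordinates $r=t^{\kappa}(v-u_\varepsilon(t))$, $\chi=\psi-\phi_\varepsilon(t)$ and derive the It\^o equations for $(r,\chi)$ by subtracting the identity satisfied by $(u_\varepsilon,\phi_\varepsilon)$. Expanding $\Lambda_N,\Omega_N$ to first order around $(u_\varepsilon,\phi_\varepsilon)$ linearises the drift as
\begin{gather*}
\begin{pmatrix}\dot r\\ \dot\chi\end{pmatrix}=t^{-m/q}\begin{pmatrix}\alpha & \beta\\ 0 & \vartheta_m\end{pmatrix}\begin{pmatrix}r\\\chi\end{pmatrix}+\text{lower order}+\text{noise},
\end{gather*}
where, using the explicit $u_0$ from \eqref{kappau0} and $\lambda_{n,h}(\phi_0)<0$, one finds
\begin{gather*}
\alpha=(1-h)\lambda_{n+l}(\phi_0)-h\delta_{n+l,q}\kappa<0,
\end{gather*}
and the $(2,1)$ entry vanishes at leading order because $\partial_v\Omega_m$ only contributes at order $t^{-m/q-\kappa}$. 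The linearised matrix being upper-triangular with strictly negative diagonal $\{\alpha,\vartheta_m\}$, the time-independent Lyapunov form $V(r,\chi)=r^{2}+K\chi^{2}$ with $K>0$ chosen large relative to $|\beta|/|\alpha|$ is positive definite and satisfies $\mathcal LV\leq -c\,t^{-m/q}V+C\,t^{-2p/q}$: the residual $t^{-2p/q}$ is the size of the effectively additive noise $t^{\kappa}\sigma_{1,j}|_{v=u_\varepsilon}\sim t^{-p/q}u_0$ that arises because $u_\varepsilon\neq 0$. The same stopping-time argument then delivers \eqref{deftilde} with $\mathcal T$ as in \eqref{mathcalT}.

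The main obstacle is the second bullet. One has to verify that throughout the relevant range of time the linearisation genuinely has the triangular structure described above, i.e.\ all coupling coefficients at leading order $t^{-m/q}$ respect the negativity of the diagonal, and that the noise remainder $t^{-2p/q}$ is truly of lower order than the dissipation $t^{-m/q}V$ on the ball of interest. Additionally, the $\varepsilon$-dependence of $(u_\varepsilon,\phi_\varepsilon)$ in Lemma~\ref{Lem3} has to be converted into a uniform smallness of the error between the true linearisation and the idealised one above, which requires fixing $\varepsilon$ sufficiently small before $\delta$ and $t_s$ are chosen. Once this is in place, the translation back to the original ${\bf x}$ coordinates is a direct application of \eqref{exch1}, \eqref{Etheta} and \eqref{exch11}.
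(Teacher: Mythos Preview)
Your approach for the case $n+l\leq m$ in the first bullet and for the second bullet is essentially the paper's (modulo inessential choices such as which component carries the constant $K$, and whether the noise contribution is absorbed into the Lyapunov function via an additive term $\mu^{2}C\zeta_{p}(t)$ or kept on the right and handled by Dynkin). The upper-triangular linearisation you describe for the second bullet agrees with what the paper obtains via Lemma~\ref{Lem3}.

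There is, however, a real gap in the first bullet when $n+l>m$. A scalar time factor $t^{2\varsigma}$ in front of $v^{2}+(\psi-\phi_\varepsilon)^{2}$ does not ``balance the $v$- and $\psi$-dissipation rates'': it multiplies every term in $\mathcal LV$ by the same power, leaving relative sizes unchanged. Expanding $(\psi-\phi_\varepsilon)(\Omega_{N}-\dot\phi_\varepsilon)$ produces the cross term $t^{-m/q}\omega_{m,1}(\phi_\varepsilon)\,v(\psi-\phi_\varepsilon)$; after Young's inequality this leaves a residual of order $t^{2\varsigma-m/q}v^{2}$, whereas the $v$-dissipation coming from $2v\Lambda_{N}$ is only of order $t^{2\varsigma-(n+l)/q}v^{2}$, strictly smaller for $n+l>m$. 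Hence the claimed bound $\mathcal LV\le-ct^{-m/q}V+Ct^{2\varsigma-2p/q}$ cannot hold. (The same issue affects your remark that $2\varsigma t^{2\varsigma-1}v^{2}$ is absorbed ``thanks to $m\le q$'': this would require $n+l\le q$, which is not assumed.)

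The paper repairs this by weighting the two components \emph{asymmetrically}:
\[
U_\ast=(U_1)^{2}+c_\ast\, t^{-(n+l-m)/q}(U_2-\phi_\varepsilon)^{2}+\text{(noise and tail corrections)},
\]
with $c_\ast=|\lambda_{n+l}(\phi_0)|\,|\vartheta_m|/(4\chi_m^{2})$. The decaying weight on the phase component brings the cross term down to the same rate $t^{-(n+l)/q}$ as the $v$-dissipation, so Young's inequality closes and $\mathcal L U_\ast\le 0$. The lower bound $U_\ast\ge c_{-}t^{-2\varsigma}d^{2}/2$ then yields precisely the $t^{-\varsigma}$-weighted statement \eqref{def1} via Doob's inequality.
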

The justification of Theorems~\ref{Th2},~\ref{Th3} and Lemma~\ref{Lem3} is discussed in Section~\ref{SPL}.

Thus, in the case of the phase locking, the stability of the equilibrium ${\bf x}(t)\equiv 0$ depends on the parameters $\lambda_k(\psi)$, $\lambda_{k,h}(\psi)$ of the amplitude equation calculated at the value of the phase shift $\phi_0$. In this case, the phase of the system is synchronizing with the perturbation. 

Now consider the case of assumption \eqref{nzerom}, when the phase drifting regime occurs. Let us show that in this case the equilibrium can also be stochastically stable, but under stronger conditions. We have the following:
 
\begin{Th}\label{Th4}
Let system \eqref{FulSys} satisfy \eqref{zero}, \eqref{lip}, \eqref{H0as}, \eqref{HFBas}, \eqref{Sform}, \eqref{rescond} and assumptions \eqref{LnOm}, \eqref{Lnas1}, \eqref{Omas}, \eqref{nzerom} hold  with $1\leq m\leq q$. 
 If $\lambda_{n}(\psi)<0$ for all $\psi\in \mathbb R$, then there exists $t_s\geq t_0$ such that for all $\varepsilon_1>0$ and $\varepsilon_2>0$ there exists $\delta>0$: the solution ${\bf x}(t)$ of system \eqref{FulSys}
with initial data ${\bf x}(t_s)={\bf x}_0$, $|{\bf x}_0|<\delta$ satisfies
\begin{gather}\label{defTh4}
\mathbb P\left(\sup_{ t\geq t_s} \left\{t^{\frac{\ell}{q}} |{\bf x}(t)|\right\}>\varepsilon_1\right)<\varepsilon_2.
\end{gather}
\end{Th}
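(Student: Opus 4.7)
The plan is to reduce to the transformed system \eqref{EQs} via Theorem~\ref{Th1} and then construct a one-dimensional stochastic Lyapunov function depending only on the amplitude variable $v$. First I would apply Theorem~\ref{Th1} with $N\geq n+1$ and $t_\ast$ large enough so that the error term $\tilde\Lambda_N$ is negligible compared to the leading part of the drift of $v$. The chain of substitutions \eqref{exch1}--\eqref{exch11}, combined with the inequality $|\tilde v_N|\leq\epsilon R$, shows that $v(t)\geq(1-\epsilon)R(t)\geq 0$ for all $t$, and that $t^{\ell/q}|{\bf x}(t)|$ and $v(t)$ are comparable up to a bounded factor close to $\sqrt{2}$ while $|{\bf x}|$ stays small. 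It therefore suffices to prove stability in probability of the scalar nonnegative process $v$.

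Under assumption \eqref{nzerom} the phase $\psi$ cannot settle near any distinguished value, so the Lyapunov function should be $\psi$-independent. On the other hand, $\lambda_n(\psi)<0$ for all $\psi$ makes the amplitude drift uniformly dissipative: setting $\lambda_-:=-\max_\psi\lambda_n(\psi)>0$, the natural and, I claim, sufficient choice is the linear function $V(v)=v$. Its key feature is that $V''\equiv 0$, so the It\^{o} correction $\tfrac{1}{2}(\sigma_{1,1}^2+\sigma_{1,2}^2)$ from the noise disappears outright and $\mathcal{L}V(v,\psi,t)\equiv\Lambda_N(v,\psi,t)$. Using the expansions \eqref{LO}, \eqref{LnOm}, \eqref{Lnas1} together with the bound $\Lambda_N(v,\psi,t)=\mathcal O(v)$ from Theorem~\ref{Th1} (which by smoothness and \eqref{zero} extends to the combined estimate $|\tilde\Lambda_N|\leq C\,v\,t^{-(N+1)/q}$), one derives
\begin{equation*}
\mathcal{L}V(v,\psi,t)\leq -\tfrac{\lambda_-}{2}\,v\,t^{-n/q}\leq 0
\end{equation*}
for all $v\in(0,v_\ast]$, $\psi\in\mathbb R$, and $t\geq t_s$, provided $v_\ast$ is small and $t_s$ sufficiently large.

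The stability conclusion then follows from a standard exit-time argument. For $\varepsilon_1\in(0,v_\ast)$, set $\tau=\inf\{t\geq t_s:v(t)\geq\varepsilon_1\}$. Dynkin's formula applied to the nonnegative local supermartingale $V(v(t\wedge\tau))$, combined with $V(v(\tau))=\varepsilon_1$ on $\{\tau<\infty\}$, yields $\mathbb P(\tau\leq T)\leq v(t_s)/\varepsilon_1$ for every $T>t_s$, and letting $T\to\infty$ gives $\mathbb P\bigl(\sup_{t\geq t_s}v(t)\geq\varepsilon_1\bigr)\leq v(t_s)/\varepsilon_1$. Choosing $\delta>0$ small enough (depending on $t_s,\varepsilon_1,\varepsilon_2$) that $|{\bf x}_0|<\delta$ implies $v(t_s)<\varepsilon_1\varepsilon_2$, and converting back through the comparison $t^{\ell/q}|{\bf x}(t)|\asymp v(t)$, gives \eqref{defTh4}.

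The main technical point will be the joint estimate $|\tilde\Lambda_N|\leq C\,v\,t^{-(N+1)/q}$ on the relevant region, since Theorem~\ref{Th1} states only the separate bounds $\tilde\Lambda_N=\mathcal O(t^{-(N+1)/q})$ and $\Lambda_N=\mathcal O(v)$; the combined estimate is what allows the remainder to be absorbed into the leading negative drift $-\tfrac{\lambda_-}{2}vt^{-n/q}$. Beyond this bookkeeping, the proof is considerably simpler than that of Theorem~\ref{Th2}: since the linear Lyapunov function eliminates the diffusion contribution at the outset, no additional structural hypothesis on the noise (such as \eqref{classPert}) is required, which is consistent with the fact that \eqref{defTh4} only asserts weak stability without an explicit decay rate.
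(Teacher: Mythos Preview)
Your proposal is correct and essentially identical to the paper's proof: the paper uses $U_1(\mathbf{x},t)=V_N\big(t^{\ell/q}\sqrt{H},\Phi-\varkappa^{-1}S,t\big)$ as a Lyapunov function, which is precisely your $V(v)=v$ pulled back to the original coordinates, obtains $\mathcal L U_1=\Lambda_N(U_1,U_2,t)\leq -\tfrac{\lambda_n^\ast}{2}t^{-n/q}U_1$, and applies the same Doob supermartingale inequality. Your technical concern about the joint bound $|\tilde\Lambda_N|\leq C\,v\,t^{-(N+1)/q}$ is in fact established at the end of the proof of Theorem~\ref{Th1} (Section~\ref{ChOfVar}), where it is shown that $\tilde\Lambda_N=\mathcal O(v)\mathcal O(t^{-(N+1)/q})$.
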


We have a similar statement in the case of a strictly non-linear leading term in $\Lambda_N(v,\psi,t)$ with respect to $v$.
\begin{Th}\label{Th5}
Let system \eqref{FulSys} satisfy \eqref{zero}, \eqref{lip}, \eqref{H0as}, \eqref{HFBas}, \eqref{Sform}, \eqref{rescond} and assumptions \eqref{LnOm}, \eqref{Lnas2}, \eqref{Omas}, \eqref{nzerom} hold  with $1\leq m\leq q$. 
 If $\lambda_{n,h}(\psi)<0$ and $\lambda_{n+l}(\psi)<0$ for all $\psi\in \mathbb R$, then there exists $t_s\geq t_0$ such that for all $\varepsilon_1>0$ and $\varepsilon_2>0$ there exists $\delta>0$: the solution ${\bf x}(t)$ of system \eqref{FulSys}
with initial data ${\bf x}(t_s)={\bf x}_0$, $|{\bf x}_0|< \delta$ satisfies \eqref{defTh4}.
\end{Th}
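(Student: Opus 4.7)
The plan is to reduce system \eqref{FulSys} to the transformed system \eqref{EQs} via Theorem~\ref{Th1}, then to exhibit a $\psi$-independent Lyapunov function in the amplitude variable $v$ alone that is a nonnegative supermartingale along trajectories; the estimate \eqref{defTh4} then follows by Doob's maximal inequality. The structure parallels Theorem~\ref{Th4} (same phase-drifting regime but with a linear amplitude equation), and the main additional work is to show that the $v^h$ contribution supplied by \eqref{Lnas2} is harmless near $v = 0$.

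First I would apply Theorem~\ref{Th1} with $N = \max\{n+l, m\}$; since $v(t) \asymp t^{\ell/q}|{\bf x}(t)|$ near the origin by \eqref{exch1}--\eqref{exch11}, the target estimate \eqref{defTh4} reduces to showing that the amplitude $v(t)$ of \eqref{EQs} stays in $(0, v_0]$ for all $t \geq t_s$ with probability at least $1-\varepsilon_2$. Since assumption \eqref{nzerom} forces $\psi(t)$ to drift at rate $\asymp t^{-m/q}$, a Lyapunov function with nontrivial $\psi$-dependence would oscillate uncontrollably along trajectories; I therefore take $V(v) = v^\gamma$ with $\gamma \in (0,1)$. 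The concavity $V''(v) < 0$ is the key trick: the It\^o correction $\tfrac{1}{2} V''(v)(\sigma_{1,1}^2+\sigma_{1,2}^2)$ is automatically non-positive, removing the need to balance noise decay against drift decay and making the argument go through uniformly in the exponents $n, l, m, q$.

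Expanding $\Lambda_N$ via \eqref{LO}, \eqref{Lnas2}, \eqref{LnOm} and using the uniform strict negativity of $\lambda_{n,h}(\psi)$, $\lambda_{n+l}(\psi)$ together with $2\pi$-periodicity and continuity to extract positive constants $c_1, c_2$, one gets
\[
V'(v)\Lambda_N(v,\psi,t) \leq -\gamma c_1 t^{-n/q} v^{\gamma+h-1} - \gamma c_2 t^{-(n+l)/q} v^\gamma + O\bigl(v^\gamma (v + t^{-1/q})\bigr).
\]
For $v_0$ small and $t_s$ large the remainder is absorbed, yielding $\mathcal{L}V(v,\psi,t) \leq -\gamma c_2 t^{-(n+l)/q} v^\gamma \leq 0$ on $v \in (0,v_0]$, $\psi \in \mathbb{R}$, $t \geq t_s$. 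Since $v = 0$ is absorbing ($\Lambda_N(0,\cdot,\cdot) = 0 = \sigma_{1,j}(0,\cdot,\cdot)$) and the bound $|\sigma_{1,j}| \leq Kv$ makes the SDE for $\log v$ well-defined (so that $v(t) > 0$ for all $t$ starting from $v(t_s) > 0$), It\^o's formula on $(0,\infty)$ identifies $V(v(t \wedge \tau_{v_0}))$, with $\tau_{v_0} := \inf\{t \geq t_s : v(t) \geq v_0\}$, as a nonnegative continuous supermartingale. Doob's maximal inequality then gives $\mathbb{P}(\tau_{v_0} < \infty) \leq (v(t_s)/v_0)^\gamma$, and choosing $\delta$ so that $|{\bf x}_0| < \delta$ forces $v(t_s) \leq v_0 \varepsilon_2^{1/\gamma}$ yields \eqref{defTh4} after undoing the transformations.

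The principal technical obstacle is the lack of $C^2$-smoothness of $V(v) = v^\gamma$ at $v = 0$: this must be handled either by regularization $V_\epsilon(v) = (v^2+\epsilon^2)^{\gamma/2} - \epsilon^\gamma$ with $\epsilon \downarrow 0$, or by the stopping-time device above combined with a Feller-type verification that the boundary $v = 0$ is inaccessible from $v(t_s) > 0$ under the multiplicative noise. A secondary point is that the expansion of $\Lambda_N$ must be uniform in $\psi \in \mathbb{R}$ (not merely on a single period), which follows from $2\pi$-periodicity of the coefficient functions and compactness, allowing $c_1, c_2$ to be taken independent of $\psi$.
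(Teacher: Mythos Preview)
Your proposal is correct, and the overall architecture---reduce via Theorem~\ref{Th1}, use a $\psi$-independent Lyapunov function in the amplitude $v$, invoke the uniform negativity of $\lambda_{n,h}$ and $\lambda_{n+l}$ to get a nonnegative supermartingale, then apply Doob's inequality---matches the paper exactly. The difference is purely in the choice of Lyapunov function, and here you have made your life harder than necessary.

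The paper takes $V=U_1$, which in the transformed coordinates is simply $v$ itself. Because $v$ is a coordinate of the reduced system \eqref{EQs}, its generator is exactly $\Lambda_N(v,\psi,t)$ with \emph{no} It\^o correction at all (the second derivative of $v\mapsto v$ vanishes). Thus the supermartingale inequality $\mathcal L U_1\le 0$ reduces immediately to $\Lambda_N\le 0$, which follows from \eqref{Lnas2} and the sign assumptions on $\lambda_{n,h},\lambda_{n+l}$ exactly as you argue. There is nothing to ``balance'': the concern you raise about noise decay versus drift decay only arises if one uses a \emph{convex} function of $v$ (e.g.\ $v^2$, as in the phase-locking proofs), whereas the linear choice $V=v$ sidesteps the issue entirely. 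Your concavity trick $V=v^\gamma$ with $\gamma\in(0,1)$ works too, since it only makes the It\^o correction more negative, but it buys nothing and costs you the regularity headache at $v=0$ that you yourself flag as the principal obstacle. With $V=v$ that obstacle simply does not exist: $v$ is smooth, $v=0$ is absorbing, and the stopped process $U_1({\bf x}(\tau_t),\tau_t)$ is a bona fide nonnegative supermartingale with no regularization needed. In short, replace $v^\gamma$ by $v$ and your argument becomes the paper's proof verbatim, minus a self-inflicted technicality.
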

The proofs of Theorems~\ref{Th4} and~\ref{Th5} are contained in Section~\ref{SPD}.

Thus, in this case, stability is justified only for global estimates on the coefficients $\lambda_k(\psi)$ and $\lambda_{k,h}(\psi)$. Note that such stability conditions in the phase drifting regime appear for deterministic systems without noise~\cite{OS21DCDS}.  

\section{Change of variables} \label{ChOfVar}

In this section, we construct a chain of changes of variables that transform system \eqref{FulSys} into the form \eqref{EQs}. 

\subsection{Energy-angle variables}
First, define the functions
\begin{gather*}
X_1(E,\varphi)\equiv  \xi_1\left(\frac{\varphi}{\nu(E)},E\right), \quad  X_2(E,\varphi)\equiv  \xi_2\left(\frac{\varphi}{\nu(E)},E\right).
\end{gather*} 
It can be easily checked that these functions are $2\pi$-periodic with respect to $\varphi$ and satisfy the system
\begin{gather}\label{LimSys2pi}
    \nu(E)\frac{\partial X_1}{\partial \varphi}=\partial_{X_2} H(X_1,X_2), \quad
    \nu(E)\frac{\partial X_2}{\partial \varphi}=-\partial_{X_1} H(X_1,X_2),
\end{gather}
Moreover, from the definition of the functions $\xi_1(\varphi,E)$ and $\xi_2(\varphi,E)$ it follows that
\begin{gather}
	\label{Hequiv}	H(X_1(E,\varphi),X_2(E,\varphi))\equiv E.
\end{gather}
By using $X_1(E,\varphi)$ and $X_2(E,\varphi)$, system \eqref{FulSys} can be rewriting in the energy-angle variables $(E,\varphi)$. 
Differentiating the identity \eqref{Hequiv} with respect to $E$ and using \eqref{LimSys2pi} yield
\begin{gather*}
\det\frac{\partial(X_1,X_2)}{\partial (E,\varphi)}=\begin{vmatrix}
        \partial_E X_1 & \partial_\varphi X_1\\
        \partial_E X_2& \partial_\varphi X_2
    \end{vmatrix} \equiv  \frac{1}{\nu(E)}\neq 0, \quad E\in [0,E_0].
\end{gather*}
Hence, the transformation \eqref{exch1} is invertible for all $E\in [0,E_0]$ and $\varphi\in [0,2\pi)$.
Define the operators
\begin{gather*}
\mathcal L({\bf x},{\bf a}({\bf x},t),{\bf A}({\bf x},t)) U :=  \partial_t U  + \left(\nabla_{\bf x} U\right)^T {\bf a}({\bf x},t)+\frac{1}{2}{\hbox{\rm tr}}\left({\bf A}^T({\bf x},t) {\bf H}_{\bf x}(U){\bf A}({\bf x},t)\right), \\ 
\nonumber \nabla_{\bf x} U := \begin{pmatrix}\partial_{x_1} U  \\  \partial_{x_2} U \end{pmatrix}, \quad 
{\bf H}_{\bf x}(U):= 	
\begin{pmatrix}
		\partial_{x_1}^2 U & \partial_{x_1}\partial_{x_2} U \\
		\partial_{x_2}\partial_{x_1} U & \partial_{x_2}^2 U
\end{pmatrix}
\end{gather*}
for any smooth function $U({\bf x},t)$. Recall that the inverse transformation to \eqref{exch1} is given by \eqref{HPhi}. Then, by applying It\^{o}'s formula, it can be shown that in the new variables ${\bf e}=(E,\varphi)^T$ system \eqref{FulSys} takes the form
\begin{equation*}
	d{\bf e}(t)={\bf b}({\bf e}(t),t) dt +{\bf B}({\bf e}(t),t)\,d{\bf w}(t)
\end{equation*}
with  ${\bf b}( {\bf e},t)\equiv (b_1(E,\varphi,t), b_2(E,\varphi,t))^T$ and ${\bf B}(  {\bf e},t)\equiv \{\beta_{i,j}(E,\varphi,t)\}_{2\times 2}$, where
\begin{gather*}
\begin{split} 
b_1  \equiv & \mathcal L({\bf x},{\bf a}({\bf x},t),{\bf A}({\bf x},t)) H(x_1,x_2)\Big|_{x_1=X_1(E,\varphi), x_2=X_2(E,\varphi)}, \\ 
b_2   \equiv & \mathcal L({\bf x},{\bf a}({\bf x},t),{\bf A}({\bf x},t)) \Phi(x_1,x_2)\Big|_{x_1=X_1(E,\varphi), x_2=X_2(E,\varphi)},\\
\left(\beta_{1,1}, \beta_{1,2}\right) \equiv & \left(\nabla_{\bf x} H(x_1,x_2)\right)^T {\bf A}({\bf x},t)\Big|_{x_1=X_1(E,\varphi), x_2=X_2(E,\varphi)}, \\ 
\left(\beta_{2,1}, \beta_{2,2}\right) \equiv & \left(\nabla_{\bf x} \Phi(x_1,x_2)\right)^T {\bf A}({\bf x},t)\Big|_{x_1=X_1(E,\varphi), x_2=X_2(E,\varphi)}.
\end{split}
\end{gather*}
It follows from \eqref{HFBas} that
\begin{gather}\label{bBas}
\begin{split}
 {\bf b}( {\bf e},t)\sim &\begin{pmatrix} 0\\ \nu(E)\end{pmatrix}+\sum_{k=1}^\infty t^{-\frac{k}{q}} \begin{pmatrix} b_{1,k}(E,\varphi,S(t))\\ b_{2,k}(E,\varphi,S(t))\end{pmatrix}, \\ 
 \beta_{i,j}(E,\varphi,t)\sim &\sum_{k=1}^\infty t^{-\frac{k}{q}} \beta_{i,j,k}(E,\varphi,S(t)) 
\end{split}
\end{gather}
as $t\to\infty$, where
\begin{gather}
\label{b0bk}
\begin{split} 
  b_{1,k}\equiv& 
	\displaystyle \left(\nabla_{\bf x} H\right)^T {\bf a}_k+\frac{1}{2}\sum_{i+j=k}{\hbox{\rm tr}}\left({\bf A}_i^T {\bf H}_{\bf x}(H){\bf A}_j\right)\Big|_{x_1=X_1(E,\varphi), x_2=X_2(E,\varphi)}, \\ 
	 b_{2,k}\equiv& 
	\displaystyle \left(\nabla_{\bf x} \Phi\right)^T {\bf a}_k+\frac{1}{2}\sum_{i+j=k}{\hbox{\rm tr}}\left({\bf A}_i^T {\bf H}_{\bf x}(\Phi){\bf A}_j\right)\Big|_{x_1=X_1(E,\varphi), x_2=X_2(E,\varphi)}, \\
	\left(\beta_{1,1,k}, \beta_{1,2,k}\right)\equiv & \left(\nabla_{\bf x} H\right)^T {\bf A}_k\Big|_{x_1=X_1(E,\varphi), x_2=X_2(E,\varphi)},\\
\left(\beta_{2,1,k}, \beta_{2,2,k}\right)\equiv & \left(\nabla_{\bf x} \Phi\right)^T {\bf A}_k\Big|_{x_1=X_1(E,\varphi), x_2=X_2(E,\varphi)}.
\end{split}
\end{gather}
Note that $b_{i,k}(E,\varphi,S)$ and $\beta_{i,j,k}(E,\varphi,S)$ are $2\pi$-periodic with respect to $\varphi$ and $S$. Moreover, the following asymptotic expansions hold:
\begin{equation}\label{expzero}
\begin{array}{rclcrcl}
	b_{1,k}(E,\varphi,S) &=& \displaystyle \sum_{l=2}^\infty E^{\frac{l}{2}}b_{1,k,l}(\varphi,S), &\quad &  
	b_{2,k}(E,\varphi,S) &= &\displaystyle \sum_{l=0}^\infty E^{\frac{l}{2}}b_{2,k,l}(\varphi,S), \\ 
	\beta_{1,j,k}(E,\varphi,S) &=& \displaystyle \sum_{l=2}^\infty E^{\frac{l}{2}}\beta_{1,j,k,l}(\varphi,S), &\quad & 
	\beta_{2,j,k}(E,\varphi,S) &= & \displaystyle \sum_{l=0}^\infty E^{\frac{l}{2}}\beta_{2,j,k,l}(\varphi,S)
\end{array}
\end{equation}
as $E\to 0$ uniformly for all $(\varphi,S)\in\mathbb R^2$.  
Indeed, it can easily be checked that $\nabla_{\bf x} H=\nu  (-\partial_\varphi X_2,\partial_\varphi X_1)^T$, $\nabla_{\bf x} \Phi= \nu  (\partial_E X_2,-\partial_E X_1)^T$, and 
\begin{equation}\label{deriv}
\begin{array}{l}
  \partial^2_{x_1} \begin{pmatrix} H \\  \Phi \end{pmatrix}  \equiv \nu 
	\begin{vmatrix} 
		\partial_\varphi {X_2} & \partial_\varphi \\
		\partial_E {X_2} & \partial_E
	\end{vmatrix} \begin{pmatrix} \nu \partial_\varphi X_2 \\  -\nu\partial_E X_2 \end{pmatrix}, \quad
	 \partial_{x_1}\partial_{x_2} \begin{pmatrix} H \\  \Phi \end{pmatrix}  \equiv - \nu \begin{vmatrix} 
		\partial_\varphi X_1 & \partial_\varphi \\
		\partial_E X_1 & \partial_E
	\end{vmatrix} \begin{pmatrix} \nu \partial_\varphi X_2 \\  -\nu\partial_E X_2 \end{pmatrix},\\ \\
	\partial^2_{x_2} \begin{pmatrix} H \\  \Phi \end{pmatrix}  \equiv \nu 
	\begin{vmatrix} 
		\partial_\varphi X_1 & \partial_\varphi \\
		\partial_E X_1 & \partial_E
	\end{vmatrix} \begin{pmatrix} \nu \partial_\varphi X_1 \\  -\nu\partial_E X_2 \end{pmatrix}.
\end{array}
\end{equation}
From \eqref{H0as} and \eqref{LimSys2pi} it follows that $X_1(E,\varphi)=\sqrt{2E}\cos \varphi+\mathcal O(E)$ and $X_2(E,\varphi)=-\sqrt{2E}\sin\varphi+\mathcal O(E)$ as $E\to 0$ uniformly for all $\varphi\in\mathbb R$. Hence,
\begin{gather*}
{\bf H}_{\bf x}(H)=\begin{pmatrix} 1 & 0 \\ 0 & 1 \end{pmatrix}+\mathcal O(E^{\frac{1}{2}}), \quad 
{\bf H}_{\bf x}(\Phi)=\frac{1}{2E}\begin{pmatrix} \sin 2\varphi  & \cos 2\varphi  \\ \cos 2\varphi  & -\sin 2\varphi    \end{pmatrix}+\mathcal O(E^{-\frac 12}), \quad E\to 0.
\end{gather*}
Combining this with \eqref{b0bk} and \eqref{deriv}, we obtain \eqref{expzero}.

\subsection{Amplitude and phase difference}
To describe effects associated with the influence of oscillating perturbations in \eqref{FulSys}, we introduce a scaled amplitude and a phase difference variable in the form \eqref{Etheta},
where $\ell$ is some non-negative integer. We take $\ell=0$ if $\nu(E)\equiv {\hbox{\rm const}}$ and $\ell>0$ if $\nu(E)\not\equiv{\hbox{\rm const}}$.  
In the new variables ${\bf y}=(R,\theta)^T$, the perturbed system is
\begin{equation}
	\label{FulSys3}
	d{\bf y}(t) =   {\bf f}({\bf y}(t),t) dt +   {\bf F}({\bf y}(t),t)\,d{\bf w}(t)
\end{equation}
with $ {\bf f}({\bf y},t)\equiv (f_1(R,\theta,t)+t^{-1}\ell R/q,f_2(R,\theta,t))^T$ and ${\bf F}({\bf y},t)\equiv \{\gamma_{i,j}(R,\theta,t)\}_{2\times 2}$, where
\begin{equation*}
\begin{array}{rcl}
 f_1(R,\theta,t)&\equiv& \displaystyle
  (2R)^{-1}\left(t^{\frac{2\ell}{q}} b_1\left(t^{-\frac{2\ell}{q}} R^2,\varkappa^{-1}S(t)+\theta,t\right)-\sum_{j=1}^2 \gamma_{1,j}^2(R,\theta,t)\right), \\
 f_2(R,\theta,t)&\equiv& \displaystyle b_2\left(t^{-\frac{2\ell}{q}} R^2,\varkappa^{-1}S(t)+\theta,t\right)-\varkappa^{-1}S'(t),\\
 \gamma_{1,j}(R,\theta,t)&\equiv& \displaystyle  (2R)^{-1} t^{\frac{2\ell}{q}}\beta_{1,j}\left(t^{-\frac{2\ell}{q}} R^2,\varkappa^{-1}S(t)+\theta,t\right),\\
 \gamma_{2,j}(R,\theta,t)&\equiv& \displaystyle  \beta_{2,j}\left(t^{-\frac{2\ell}{q}} R^2,\varkappa^{-1}S(t)+\theta,t\right).
\end{array}
\end{equation*}
From \eqref{bBas} and \eqref{expzero} it follows that	
\begin{gather*}
 {\bf f}({\bf y},t)\sim \sum_{k=1}^\infty t^{-\frac{k}{ q}} {\bf f}_k( {\bf y},S(t)), \quad 
 {\bf F}({\bf y},t)\sim \sum_{k=1}^\infty t^{-\frac{k}{ q}} {\bf F}_{k}( {\bf y},S(t)),  
\end{gather*}
as $t\to\infty$, where 
${\bf f}_k({\bf y},S)\equiv (f_{1,k}(R,\theta,S)+\delta_{k,q}\ell R/q,f_{2,k}(R,\theta,S))^T$ and ${\bf F}_{k}({\bf y},S)\equiv \{\gamma_{i,j,k}(R,\theta,S)\}_{2\times 2}$ are $2\pi$-periodic in $\theta$ and $2\pi\varkappa$-periodic in $S$. In particular, 
if $\ell=1$, we have
\begin{equation*}
\begin{array}{rcl}
 f_{1,k} &\equiv & \displaystyle  \frac{1}{2} \sum_{\substack{l+m=k\\ l\geq 0, \, m\geq 1}}b_{1,m,l+2}(\theta+\varkappa^{-1} S,S)R^{1+l}-\frac{1}{2R}\sum_{j=1}^2\sum_{\substack{l+m=k\\ l\geq 1, \, m\geq 1}}\gamma_{1,j,m}(R,\theta,S)\gamma_{1,j,l}(R,\theta,S),\\
 f_{2,k} &\equiv&\displaystyle \frac{\nu^{ ( {k}/{2})}(0)}{(k/2)!} R^{k}-\varkappa^{-1}\Big(1-\frac{k}{q}+\delta_{k,q}\Big)s_{k} +\sum_{\substack{l+m=k\\ l\geq 0,\, m\geq 1}} b_{2,m,l}(\theta+\varkappa^{-1} S,S) R^{l},\\
 \gamma_{1,j,k} &\equiv& \displaystyle  \frac{1}{2}\sum_{\substack{l+m=k\\ l\geq 0, \, m\geq 1}}  \beta_{1,j,m,l+2}(\theta+\varkappa^{-1} S,S)R^{1+l},\\
 \gamma_{2,j,k} &\equiv &\displaystyle \sum_{\substack{l+m=k\\ l\geq 0, \, m\geq 1}}  \beta_{2,j,m,l}(\theta+\varkappa^{-1} S,S)R^{l},
\end{array}
\end{equation*}
where $\delta_{k,q}$ is the Kronecker delta. Here and below we set $s_j=0$ for $j>q$, $\nu^{(k/2)}(0)=0$ for odd $k$. Thus,
$f_1(R,\theta,t)=\mathcal O(t^{-{1}/{q}})\mathcal O(R)$, $f_2(R,\theta,t)=\mathcal O(t^{-{1}/{q}})$, $\gamma_{1,j}(R,\theta,t)=\mathcal O(t^{-{1}/{q}})\mathcal O(R)$, $\gamma_{2,j}(R,\theta,t)=\mathcal O(t^{-{1}/{q}})$ as $R\to 0$ and $t\to\infty$ uniformly for all $\theta\in\mathbb R$.

\subsection{Averaging}
Note that system \eqref{FulSys3} is asymptotically autonomous with the corresponding trivial limiting system $\dot R=0$, $\dot \theta=0$. Hence, $S(t)$ changes faster in comparison to possible variations of $R(t)$ and $\theta(t)$ as $t\to\infty$. Next, we consider the change of variables averaging the drift terms of system \eqref{FulSys3}. Note that such trick is effectively used in perturbation theory~\cite{BM61,AN84,AKN06}. 
The transformation is sought in the form 
\begin{gather}\label{VnPsiN}
V_N(R,\theta,t)=R+\sum_{k=1}^N t^{-\frac{k}{ q}} v_k (R,\theta,S(t)), \quad 
\Psi_N(R,\theta,t)=\theta+\sum_{k=1}^N t^{-\frac{k}{ q}} \psi_k(R,\theta,S(t)),
\end{gather}
with some integer $N\geq 1$. The coefficients $v_k(R,\theta,S)$ and $\psi_k(R,\theta,S)$ are chosen such that in the new variables $v(t)\equiv V_N(R(t),\theta(t),t)$ and $\psi(t)\equiv \Psi_N(R(t),\theta(t),t)$ the perturbed system  takes the form \eqref{EQs}, where the functions $\Lambda_k(v,\psi)$ and $\Omega_k(v,\psi)$ are independent of $S(t)$ and the remainders $\tilde\Lambda_N(v,\psi,t)$, $\tilde\Omega_N(v,\psi,t)$ decay sufficiently fast as $t\to\infty$. By applying It\^{o}'s formula to \eqref{VnPsiN}, we obtain
\begin{equation}\label{Itovpsi}
\begin{array}{rcl}
dv&=&\displaystyle \mathcal L({\bf y},{\bf f}({\bf y},t),{\bf F}({\bf y},t))  V_N(R,\theta,t)\, dt + (\nabla_{ {\bf y}} V_N(R,\theta,t))^T {\bf F}( {\bf y},t)\, d{\bf w}, \\  
d\psi&=&\displaystyle \mathcal L({\bf y},{\bf f}({\bf y},t),{\bf F}({\bf y},t)) \Psi_N(R,\theta,t)\, dt + (\nabla_{\bf y} \Psi_N(R,\theta,t))^T {\bf F}({\bf y},t)\, d{\bf w}
\end{array}
\end{equation}
Note that the drift terms in \eqref{Itovpsi} have the following asymptotic expansions as $t\to\infty$:
\begin{align*}
&\mathcal L({\bf y},{\bf f}({\bf y},t),{\bf F}({\bf y},t)) \begin{pmatrix} V_N(R,\theta,t) \\ \Psi_N(R,\theta,t) \end{pmatrix}\sim\\
&\sum_{k=1}^\infty t^{-\frac{k}{ q}}  
\left\{
  {\bf f}_k({\bf y},S(t))
 + s_0 \partial_S \begin{pmatrix} v_k  \\ \psi_k  \end{pmatrix}   
\right\}
 \\
&+\sum_{k=2}^\infty t^{-\frac{k}{ q}} \sum_{j=1}^{k-1} \left\{\left(f_{1,j}+\delta_{j,q}\frac{2\ell R}{q}\right)\partial_{R}+f_{2,j}\partial_\theta +
 s_{j  }\left(1-\frac{j}{ q}+ \delta_{j, q}\right) \partial_S -\delta_{j,q}\frac{k-q}{ q}\right\}\begin{pmatrix} v_{k-j} \\ \psi_{k-j}  \end{pmatrix} 
\\
&+ \frac{1}{2}\sum_{k=3}^\infty t^{-\frac{k}{ q}} \sum_{i+j+l=k} 
\begin{pmatrix}{\hbox{\rm tr}}\left({\bf F}^T_{i}({\bf y},S(t)) {\bf H}_{\bf y}(v_{j} ){\bf F}_{l}({\bf y},S(t))\right) \\ {\hbox{\rm tr}}\left({\bf F}^T_{i}({\bf y},S(t)) {\bf H}_{\bf y}(\psi_{j} ){\bf F}_{l}({\bf y},S(t))\right) \end{pmatrix},
\end{align*}
where it is assumed that ${\bf f}_k({\bf y},S)\equiv0$ and ${\bf F}_{k}({\bf y},S) \equiv0 $ if $k<1$, and $v_j(R,\theta,S)\equiv \psi_j(R,\theta,S)\equiv  0$ if $j<1$ or $j>N$.  
Comparing the asymptotics of the drift terms in \eqref{Itovpsi} with \eqref{LO} gives the following chain of differential equations for determining the coefficients $v_k(R,\theta,S)$, $\psi_k(R,\theta,S)$:
\begin{gather}\label{RSys}
   s_0 \partial_S \begin{pmatrix} v_k \\ \psi_k \end{pmatrix} =
\begin{pmatrix}
        \Lambda_k(R,\theta)\\
        \Omega_k(R,\theta)
\end{pmatrix} 
-{\bf f}_k( {\bf y},S)+ \tilde {\bf f}_k({\bf y},S),
\quad k\geq 1,
\end{gather}
where the additional terms $\tilde {\bf f}_k({\bf y},S)\equiv (\tilde{f}_{1,k}(R,\theta,S), \tilde{f}_{2,k}(R,\theta,S))^T$ in the right-hand side are expressed explicitly through $\{v_i,\psi_i,\Lambda_i,\Omega_i\}_{i=1}^{k-1}$. In particular, $\tilde {\bf f}_{1}\equiv 0$,
\begin{align*}
\tilde {\bf f}_2 \equiv &  
	(v_1\partial_{R}+\psi_1\partial_\theta) \begin{pmatrix} \Lambda_1\\  \Omega_1  \end{pmatrix}+\frac{\ell }{q}\begin{pmatrix} v_{2-q} \\ 0 \end{pmatrix}
	\\& -\left\{ \left( f_{1,1}+\delta_{1,q}\frac{ \ell R}{q}\right)\partial_{R}+f_{2,1}\partial_\theta 
	+ s_{1}\left(1-\frac{1}{q}+\delta_{1,q}\right)\partial_S- \delta_{1,q}\frac{2-q}{q}\right\} \begin{pmatrix}v_1 \\  \psi_1  \end{pmatrix},
\\
\tilde {\bf f}_3 \equiv &  \sum_{i+j=3} (v_j\partial_{R}+\psi_j\partial_\theta) \begin{pmatrix} \Lambda_i\\  \Omega_i  \end{pmatrix} +\frac{1}{2}\Big( v_1^2 \partial_{R}^2+ 2v_1\psi_1 \partial_{R} \partial_\theta+ \psi_1^2\partial_\theta^2\Big)\begin{pmatrix} \Lambda_1\\  \Omega_1  \end{pmatrix}+\frac{\ell }{q}\begin{pmatrix} v_{3-q} \\ 0 \end{pmatrix}\\
	&
	-	\sum_{j=1}^2 \left\{ \left( f_{1,j}+\delta_{j,q}\frac{ \ell R}{q}\right)\partial_{R}+f_{2,j}\partial_\theta 
	+ s_{j}\left(1-\frac{j}{q}+\delta_{j,q}\right)\partial_S- \delta_{j,q}\frac{3-q}{q}\right\}\begin{pmatrix}v_{3-j} \\  \psi_{3-j}  \end{pmatrix}
	\\
	& 
	-\frac{1}{2}  
\begin{pmatrix}{\hbox{\rm tr}}\left(  {\bf F}^T_{1} {\bf H}_{\bf y}(v_{1}) {\bf F}_{1}\right) \\ {\hbox{\rm tr}}\left({\bf F}^T_{1} {\bf H}_{\bf y}(\psi_{1}){\bf F}_{1}\right) \end{pmatrix} ,
\\
\tilde {\bf f}_k \equiv &
    \sum_{
        \substack{
            l+a_1+\ldots+ia_i+b_1+\ldots+jb_j=k\\
            a_1+\ldots+a_i+b_1+\ldots+b_j\geq 1
                }
            } C_{i,j,a_1,\dots,a_i,b_1,\dots,b_j}   v_1^{a_1}\cdots v_{i}^{a_i}\psi_1^{b_1}  \cdots \psi_{j}^{b_j} \partial^{a_1+\ldots+a_i}_{ R}\partial^{b_1+\ldots+b_j}_\theta  \begin{pmatrix} \Lambda_l \\  \Omega_l  \end{pmatrix}  \\
            & - \sum_{j=1}^{k-1} \left\{   \left( f_{1,j}+\delta_{j,q}\frac{\ell  R}{q}\right)\partial_{ R}+f_{2,j}\partial_\theta
	+  s_{j }\left(1-\frac{j}{ q}+\delta_{j, q}\right)\partial_S -\delta_{j,q}\frac{k-q}{q}\right\}\begin{pmatrix}v_{k-j} \\  \psi_{k-j}  \end{pmatrix}  \\
	& -\frac{1}{2} \sum_{i+j+l=k} 
\begin{pmatrix}{\hbox{\rm tr}}\left( {\bf F}^T_{i} {\bf H}_ {\bf y} (v_{j}) {\bf F}_{l}\right) \\ {\hbox{\rm tr}}\left( {\bf F}^T_{i} {\bf H}_{\bf y}(\psi_{j}) {\bf F}_{l}\right) \end{pmatrix}+\frac{\ell }{q}\begin{pmatrix} v_{k-q} \\ 0 \end{pmatrix},
\end{align*} 
where $C_{i,j,a_1,\dots,a_i,b_1,\dots,b_j}$ are some constant parameters. 

Let us define
\begin{align*}
\begin{pmatrix}
\Lambda_k(R,\theta)\\
\Omega_k(R,\theta)
\end{pmatrix} \equiv \left
\langle  {\bf f}_k( {\bf y},S)-\tilde {\bf f}_k({\bf y},S)\right\rangle_{\varkappa S},
\end{align*}
where
\begin{gather*}
\langle Z_k(R,\theta,S)\rangle_{\varkappa S}:=\frac{1}{2\pi\varkappa}\int\limits_0^{2\pi\varkappa} Z(R,\theta,S)\,dS=\frac{1}{2\pi}\int\limits_0^{2\pi } Z(R,\theta,\varkappa S)\,dS.
\end{gather*}
Then, the right-hand side of \eqref{RSys} is $2\pi \varkappa$-periodic in $S$ with zero average. 
Integrating \eqref{RSys} yields
\begin{gather*}
\begin{pmatrix} v_k(R,\theta,S)\\ \psi_k(R,\theta,S)\end{pmatrix}=-\frac{1}{s_0}\int\limits_0^S \begin{pmatrix} \{f_{1,k}(R,\theta,s)-\tilde f_{1,k}(R,\theta,s)\}_{\varkappa s}   \\ \{f_{2,k}(R,\theta,s)-\tilde{f}_{2,k}(R,\theta,s)\}_{\varkappa s}\end{pmatrix}\, ds+\begin{pmatrix} \tilde v_k(R,\theta)\\ \tilde \psi_k(R,\theta)\end{pmatrix},
\end{gather*}
where 
\begin{gather*}
\{Z(R,\theta,s)\}_{\varkappa s}:=Z(R,\theta,s)-\langle Z(R,\theta,s)\rangle_{\varkappa s},
\end{gather*}
 and the functions $\tilde v_k(R,\theta)$ and $\tilde \psi_k(R,\theta)$ are chosen such that $\langle v_k(R,\theta,S)\rangle_{\varkappa S}\equiv \langle \psi_k(R,\theta,S)\rangle_{\varkappa S}\equiv0$. It can easily be checked that 
\begin{equation*}
\begin{array}{lll}
 \tilde {f}_{1,k}(R,\theta,S)= \mathcal O(R), & \Lambda_k(R,\theta)=\mathcal O(R),  & v_k(R,\theta,S)=\mathcal O(R),\\
\tilde {f}_{2,k}(R,\theta,S)= \mathcal O(1), & \Omega_k(R,\theta)=\mathcal O(1), & \psi_k(R,\theta,S)=\mathcal O(1) 
\end{array}
\end{equation*}
as $R\to 0$ uniformly for all  $(\theta,S)\in\mathbb R^2$. 
 
From \eqref{VnPsiN} it follows that for all $\epsilon\in (0, 1)$  there exist $t_\ast \geq \max\{t_0,1\}$ and $R_0>0$ such that 
\begin{gather}
\label{VNineq}
	|V_N(R,\theta,t)-R|\leq \epsilon R, \quad |\Psi_N(R,\theta,t)-\theta|\leq \epsilon
\end{gather} for all $R\in [0,R_0]$, $\theta\in \mathbb R$ and $t\geq t_\ast$.
Moreover,
\begin{gather*}
\det\frac{\partial(V_N,\Psi_N)}{\partial (R,\theta)}=\begin{vmatrix}
        \partial_{R} V_N(R,\theta,t) & \partial_\theta V_N(R,\theta,t)\\
        \partial_{R} \Psi_N(R,\theta,t) & \partial_\theta \Psi_N(R,\theta,t)
    \end{vmatrix} = 1+\mathcal O(t^{-\frac{1}{q}})
\end{gather*}
as $t\to\infty$ uniformly for all $\theta\in\mathbb R$ and $R\in [0,R_0]$. Hence, the transformation $(R,\theta,t)\to (v,\psi,t)$ is invertible for all $v\in [0,v_0]$, $\psi\in \mathbb R$ and $t\geq t_\ast$ with $v_0=R_0(1-\epsilon)$. We choose $R_0 = E_0^{1/2} t_\ast^{\ell/q}$, then for all $t\geq t_\ast$ the transformation \eqref{Etheta} is valid for all $0\leq E\leq E_0$ and $\varphi\in\mathbb R$.

Denote by $R=\mathfrak R(v,\psi,t)$, $\theta=\mathfrak T(v,\psi,t)$ the inverse transformation to \eqref{exch11}. Then,
\begin{align*}
	 {\bf G}_N({\bf z},t)\equiv &\begin{pmatrix}  \partial_{R} V_N(R,\theta,t) & \partial_\theta V_N(R,\theta,t) \\ \partial_{R} \Psi_N({R},\theta,t) & \partial_\theta \Psi_N({R},\theta,t) \end{pmatrix}  {\bf F}({\bf y},t)\Big|_{R=\mathfrak R(v,\psi,t),\theta=\mathfrak T(v,\psi,t)},\\
	 \tilde \Lambda_N(v,\psi,t) \equiv & -\sum_{k=1}^{N} t^{-\frac{k}{ q}} \Lambda_k(v,\psi)+ \mathcal L({\bf y},{\bf f}({\bf y},t),{\bf F}({\bf y},t)) V_N(R,\theta,t) \Big|_{R=\mathfrak R(v,\psi,t),\theta=\mathfrak T(v,\psi,t)}, 
	\\
	 \tilde \Omega_N(v,\psi,t) \equiv & -\sum_{k=1}^{N} t^{-\frac{k}{ q}}  \Omega_k(v,\psi)+ \mathcal L({\bf y},{\bf f}({\bf y},t),{\bf F}({\bf y},t)) \Psi_N(R,\theta,t) \Big|_{R=\mathfrak R(v,\psi,t),\theta=\mathfrak T(v,\psi,t)}.
\end{align*}
It follows that 
\begin{align*}
 \tilde \Lambda_N=\mathcal O(v)\mathcal O (t^{-\frac{N+1}{ q}} ), \quad 
\tilde \Omega_N =\mathcal O (t^{-\frac{N+1}{ q}} ), \quad 
 \sigma_{1,j} =\mathcal O(v)\mathcal O(t^{-\frac{1}{q}}), \quad  \sigma_{2,j} = \mathcal O(t^{-\frac{1}{q}})
\end{align*}
as $v\to 0$ and $t\to\infty$ uniformly for all $\psi\in\mathbb R$.

Thus, we obtain the proof of Theorem~\ref{Th1} with
\begin{gather*}
\tilde{v}_N (R,\theta,t)\equiv \sum_{k=1}^N t^{-\frac{k}{ q}} v_k (R,\theta,S(t)), \quad \tilde{\psi}_N(R,\theta,t)\equiv \sum_{k=1}^N t^{-\frac{k}{ q}} \psi_k(R,\theta,S(t)).
\end{gather*}

\section{Asymptotic regimes}
\label{AR}
In this section, we discuss possible asymptotic regimes in the perturbed system \eqref{FulSys} associated with the behaviour of the phase of solutions to the truncated system \eqref{DetSys}.

\begin{proof}[Proof of Lemma~\ref{Lem1}]
Since $\Lambda_N(u,\phi,t)=\mathcal O(u)$ as $u\to 0$ uniformly for all $\phi\in\mathbb R$ and $t\geq t_\ast$, we see that the first equation in \eqref{DetSys} has a fixed point $u(t)\equiv 0$. It follows from \eqref{Omas} that the right-hand side of the second equation at $u=0$ has the form 
\begin{gather*}
	\Omega_N(0,\phi,t)\equiv \sum_{k=m}^N t^{-\frac{k}{q}}\omega_{k,0}(\phi)+\tilde \Omega_N(0,\phi,t).
\end{gather*}
Substituting  $u=0$ and $\phi=\phi_0+\eta$ into \eqref{DetSys} yields
\begin{gather}\label{Zeq}
\frac{d\eta}{dt}=t^{-\frac{m}{q}}\mathcal Z_{m}(\eta)+\tilde{\mathcal Z}_{m}(\eta,t),
\end{gather}
where
\begin{align*}
&\mathcal Z_{m}(\eta)\equiv  \omega_{m,0}(\phi_0+\eta)= \eta\left(\vartheta_{m}+\mathcal O(\eta)\right), \quad \eta\to 0,\\
&\tilde{\mathcal Z}_{m}(\eta,t)\equiv \sum_{k=m+1}^N t^{-\frac{k}{q}} \omega_{k,0}(\phi_0+\eta)+\tilde \Omega_N(0,\phi_0+\eta,t)=\mathcal O(t^{- \frac{m+1}{q}}), \quad t\to\infty.
\end{align*}
Hence, there exist $K_0>0$, $t_1\geq t_\ast$ and $\eta_0>0$ such that
\begin{gather}\label{etaineq}
\frac{d|\eta|}{dt}\leq t^{-\frac{m}{q}}\left(-\frac{|\vartheta_{m}|}{2}|\eta|+K_0t^{-\frac{ 1 }{q}}\right)
\end{gather}
for all $t\geq t_1$ and $|\eta|\leq\eta_0$. For all $\varepsilon\in (0,\eta_0)$ define
\begin{gather*}
\delta_\varepsilon=\frac{4 K_0 }{|\vartheta_{m}|}t_\varepsilon^{-\frac{ 1 }{q}}, \quad 
t_\varepsilon=\max\left\{t_1, \left(\frac{8K_0}{\varepsilon |\vartheta_{m}|}\right)^{q}\right\}.
\end{gather*}
Then $d|\eta(t)|/dt<0$ for solutions of \eqref{Zeq} such that $\delta_\varepsilon \leq |\eta(t)|<\varepsilon$ as $t\geq t_\varepsilon$. Hence, any solution of \eqref{Zeq} with initial data $|\eta(t_\varepsilon)|\leq\delta_\varepsilon$ cannot leave the interval $|\eta|\leq \varepsilon$ as $t \geq t_\varepsilon$.

Let $m=q$. Then, by integrating \eqref{etaineq} with respect to $t$, we get
\begin{gather*}
 \eta(t)  =
\begin{cases} 
\mathcal O(t^{-\frac{|\vartheta_{m}|}{2}})+\mathcal O(t^{-\frac{1}{q}}),& \quad \displaystyle \frac{|\vartheta_{m}|}{2}\neq \frac{1}{q},\\
\mathcal O( t^{-\frac{1}{q}}\log t),& \quad \displaystyle  \frac{|\vartheta_{m}|}{2}= \frac{1}{q}.
\end{cases}
\end{gather*}
Hence, there exists a particular solution of equation \eqref{Zeq} such that $\eta(t)=\mathcal O(t^{-c})$ as $t\to\infty$ with $c=\min\{|\vartheta_{m}|,q^{-1}\}/2>0$. 
\end{proof}

\begin{proof}[Proof of Lemma~\ref{Lem2}]
Since $\omega_{m,0}(\phi)\neq 0$, it follows that $0<\omega^-_{m,0}\leq |\omega_{m,0}(\phi)|\leq \omega^+_{m,0}$ for all $\phi\in\mathbb R$ with some non-zero constants $\omega^\pm_{m,0}$. Hence, there exists $t_1\geq t_\ast$ such that 
$|\Omega_N(0,\phi,t)|\geq t^{-\frac{m}{q}} \omega^-_{m,0}/2$ as $t\geq t_1$. It can easily be seen that $u(t)\equiv 0$ satisfies the first equation in \eqref{DetSys} for all $\phi\in\mathbb R$. Therefore, substituting $u=0$ into the second equation in \eqref{DetSys}, yields $d\phi/dt\geq t^{-\frac{m}{q}} \omega^-_{m,0}/2$ as $t\geq t_1$ if $\omega_{m,0}(\phi)>0$ and $d\phi/dt\leq - t^{-\frac{m}{q}} \omega^-_{m,0}/2$ if $\omega_{m,0}(\phi)<0$. Integrating these inequalities, we see that $|\phi(t)|\to \infty$ as $t\to\infty$.
\end{proof}

\section{Stability of phase locking}
\label{SPL}
\begin{proof}[Proof of Theorem~\ref{Th2}]
We take $N=\max\{n,m\}$ in Theorem~\ref{Th1}. From Lemma~\ref{Lem1} it follows that for all $\varepsilon>0$ the corresponding reduced system \eqref{DetSys} has a solution $u_\varepsilon(t)\equiv 0$, $\phi_\varepsilon(t)$ with asymptotics \eqref{zetaas}. Let us show that the solutions of system \eqref{FulSys} remain in some neighbourhood of this trajectory with a high probability. 

Consider auxiliary functions
\begin{gather}\label{U1U2}
\begin{split}
&U_1({\bf x},t)=V_N\left(t^{\frac{\ell}{q}} \sqrt{H(x_1,x_2)}, \Phi(x_1,x_2)-\varkappa^{-1}S(t),t\right), \\
&U_2({\bf x},t)=\Psi_N\left(t^{\frac{\ell}{q}} \sqrt{H(x_1,x_2)}, \Phi(x_1,x_2)-\varkappa^{-1}S(t),t\right),
\end{split}
\end{gather}
where $H(x_1,x_2)$, $\Phi(x_1,x_2)$  and $V_N(R,\theta,t)$, $\Psi_N(R,\theta,t)$ are defined by \eqref{HPhi} and \eqref{VnPsiN}, respectively. It follows from \eqref{Etheta} and \eqref{VNineq} that for all $\epsilon\in (0,1)$ there exists $E_\ast\in(0, E_0]$ such that
\begin{gather}\begin{split} \label{U1U2est}
&(1-\epsilon) \sqrt{H(x_1,x_2)}\leq t^{-\frac{\ell}{q}} U_1({\bf x},t)\leq (1+\epsilon)\sqrt{H(x_1,x_2)},\\  
&\left|U_2({\bf x},t)-\Phi(x_1,x_2)+\varkappa^{-1}S(t)\right|\leq C t^{-\frac{1}{q}}
\end{split}\end{gather}
for all ${\bf x}\in \mathcal D(E_\ast)$ and $t\geq t_\ast$ with some $C={\hbox{\rm const}}>0$.

It is not hard to prove that
\begin{gather*}
\mathcal L({\bf x},{\bf a},{\bf A}) \left(U ({\bf x},t)\right)^2  \equiv  2U ({\bf x},t) \mathcal L({\bf x},{\bf a},{\bf A}) U ({\bf x},t)+{\hbox{\rm tr}}\left({\bf A}^T({\bf x},t) {\bf N}_{\bf x}(U ){\bf A}({\bf x},t)\right)
\end{gather*}
for all smooth functions $U({\bf x},t)$, where
\begin{gather*}
{\bf N}_{\bf x}(U)\equiv \begin{pmatrix}
		(\partial_{x_1} U)^2 & \partial_{x_1}U\partial_{x_2} U \\
		\partial_{x_2}U\partial_{x_1} U & (\partial_{x_2} U)^2
\end{pmatrix}.
\end{gather*}
From the proof of Theorem~\ref{Th1}, we see that
\begin{align*}
\mathcal L({\bf x},{\bf a},{\bf A})U_1({\bf x},t) &\equiv  \mathcal L({\bf y},{\bf f},{\bf F}) V_N(R,\theta,t) \equiv \Lambda_N\left(U_1({\bf x},t), U_2({\bf x},t),t\right),\\
\mathcal L({\bf x},{\bf a},{\bf A})U_2({\bf x},t) &\equiv \mathcal L({\bf y},{\bf f},{\bf F})\Psi_N(R,\theta,t) \equiv \Omega_N\left( U_1({\bf x},t), U_2({\bf x},t),t\right)
\end{align*}
for all ${\bf x}\in\mathcal D(E_0)$ and $t\geq t_\ast$.
Define
\begin{gather*}
Z_1({\bf x},t)\equiv {\hbox{\rm tr}}\left({\bf A}^T({\bf x},t) {\bf N}_{\bf x}(U_1){\bf A}({\bf x},t)\right), \quad 
Z_2({\bf x},t)\equiv {\hbox{\rm tr}}\left({\bf A}^T({\bf x},t) {\bf N}_{\bf x}(U_2){\bf A}({\bf x},t)\right).
\end{gather*}
Then, using \eqref{exch1}, \eqref{Etheta}, \eqref{exch11} and \eqref{classPert}, we get the following estimates:
\begin{gather}\label{Zineq}
\left|Z_1({\bf x},t)\right|\leq \mu^2 C_1 t^{-\frac{2p}{q}}
, \quad
\left|Z_2({\bf x},t)\right|\leq \mu^2 C_2 t^{-\frac{2p}{q}}
\end{gather}
for all $ {\bf x}\in\mathcal D(E_\ast)$ and $t\geq t_\ast$ with some positive constants $C_1$ and $C_2$.

We divide the remainder of the proof into two parts.

1. First, consider the case $n< m$. The Lyapunov function candidate for system \eqref{FulSys} is constructed in the following form: 
\begin{gather}\label{Uast1}
U_\ast({\bf x},t)=  \big(U_1({\bf x},t)\big)^2  + \big(U_2({\bf x},t)-\phi_\varepsilon(t)\big)^2 +   C^2 t^{-\frac{2}{q}}+\mu^2(C_1+ C_2) \zeta_{p}(t)
\end{gather}
with a positive function
\begin{gather}\label{zetap}
\zeta_{p}(t)\equiv \begin{cases}
\displaystyle t_\ast^{-\frac{2p}{q}}\left(\mathcal T+t_s-t\right), & 2p<q,\\
\displaystyle \log\left(\frac{\mathcal T+t_s}{t}\right), & 2p=q,\\
\displaystyle \int\limits_t^{\mathcal T+t_s} \varsigma^{-\frac{2p}{q}}\,d\varsigma, & 2p>q,
\end{cases}
\end{gather}
and some $t_s\geq t_\ast$.  It can easily be verified that
\begin{eqnarray*}
\mathcal L({\bf x},{\bf a},{\bf A}) U_\ast({\bf x},t)& = &
2U_1({\bf x},t)\Lambda_N\big(U_1({\bf x},t), U_2({\bf x},t),t\big)\\
&&+ 2\left(U_2({\bf x},t)-\phi_\varepsilon(t)\right) \left(\Omega_N\left(U_1({\bf x},t), U_2({\bf x},t),t\right) - \phi_\varepsilon'(t)\right) \\
&& + Z_1({\bf x},t)+ Z_2({\bf x},t) -\frac{2C^2}{q}t^{-1-\frac{2}{q}}+\mu^2 (C_1+C_2) \zeta'_p(t)
\end{eqnarray*}
for all $ {\bf x} \in \mathcal D(E_\ast)$ and $t\geq t_\ast$. Note that $\phi_\varepsilon'(t)\equiv \Omega_N(0,\phi_\varepsilon(t),t)$. From \eqref{LO}, \eqref{LnOm}, \eqref{Lnas1} and \eqref{Omas} it follows that
\begin{align*} 
&\mathcal L({\bf x},{\bf a},{\bf A}) U_\ast({\bf x},t)=\\
& 2 t^{-\frac{n}{q}}U_1^2 \left(\lambda_n(\phi_\varepsilon(t))+\mathcal O(\Delta) +\mathcal O(t^{-\frac{1}{q}})\right)\\
& +2 t^{-\frac{m}{q}} \Big(\omega_{m,0}'(\phi_\varepsilon(t)) \big(U_2-\phi_\varepsilon(t)\big)^2+\omega_{m,1}(\phi_\varepsilon(t))U_1 \big(U_2-\phi_\varepsilon(t)\big)+\mathcal O(\Delta^3)\Big)\left(1+\mathcal O(t^{-\frac{1}{q}})\right)\\
&+ Z_1({\bf x},t)+Z_2({\bf x},t)+\mu^2(C_1+C_2) \zeta'_p(t) -\frac{2C^2}{q}t^{-1-\frac{2}{q}}
\end{align*}
as $\Delta({\bf x},t)\to 0$ and $t\to\infty$, where $\Delta({\bf x},t)\equiv \sqrt{(U_1({\bf x},t))^2+(U_2({\bf x},t)-\phi_\varepsilon(t))^2}$. 
The application of Young's inequality yields
\begin{equation}\label{Yineq}
\begin{array}{rcl}
\omega_{m,1}(\phi_\varepsilon(t))U_1 (U_2-\phi_\varepsilon(t))&\leq & \chi_m U_1 |U_2-\phi_\varepsilon(t)|\\
&&\\
& \leq &\displaystyle \frac{|\omega_{m,0}'(\phi_\varepsilon(t))|}{2} (U_2-\phi_\varepsilon(t))^2+\frac{\chi_m^2}{2|\omega_{m,0}'(\phi_\varepsilon(t))|} U_1^2
\end{array}
\end{equation}
with $\chi_m=1+\max_\psi |\omega_{m,1}(\psi)|$. 
By choosing $\varepsilon>0$ in Lemma~\ref{Lem1} small enough, we can ensure that 
\begin{gather*}
\lambda_n(\phi_\varepsilon(t))\leq -\frac{|\lambda_n(\phi_0)|}{2}, \quad  
\omega_{m,0}'(\phi_\varepsilon(t))\leq - \frac{|\vartheta_m|}{2}
\end{gather*} 
for all $t\geq t_\ast$.
From \eqref{Zineq} and \eqref{zetap} it follows that $Z_i({\bf x},t)+\mu^2 C_i \zeta'_p(t)\leq 0$  for all ${\bf x}\in\mathcal D(E_\ast)$ and $t\geq t_\ast$.
Combining these estimates, we see that 
\begin{gather*} 
\mathcal L({\bf x},{\bf a},{\bf A}) U_\ast({\bf x},t)\leq -\left( t^{-\frac{n}{q}}|\lambda_n(\phi_0)|U_1^2 + t^{-\frac{m}{q}}\frac{|\vartheta_m|}{2} (U_2-\phi_\varepsilon(t))^2\right) \left(1+\mathcal O(\Delta)+\mathcal O(t^{-\frac{1}{q}})\right).
\end{gather*}
Therefore, there exist $\Delta_1>0$ and $t_1\geq t_\ast$ such that $\mathcal L({\bf x},{\bf a},{\bf A}) U_\ast({\bf x},t)\leq 0$ for all ${\bf x}\in\mathcal D(E_\ast)$ and $t\geq t_\ast$ such that $\Delta({\bf x},t)\leq \Delta_1$.

By choosing $\epsilon>0$ in \eqref{U1U2est} small enough,  we obtain the following inequalities:
\begin{gather*} 
	\frac{d^2({\bf x},t)}{2}\leq \Delta^2({\bf x},t)+C^2 t^{-\frac{2}{q}}\leq 2 d^2({\bf x},t)+3 C^2 t^{-\frac{2}{q}}
\end{gather*}
for all $ {\bf x} \in \mathcal D(E_\ast)$ and $t\geq t_\ast$. Therefore, 
\begin{gather}\label{Uprop}
	U_\ast({\bf x},t)\geq \frac{d^2({\bf x},t)}{2}, \quad \mathcal L({\bf x},{\bf a},{\bf A}) U_\ast({\bf x},t)\leq 0
\end{gather}
for all $({\bf x},t)\in\mathfrak D(d_1,t_2,\mathcal T)$ with $d_1= \Delta_1/2>0$ and $t_2=\max\{t_1,(2C/\Delta_1)^q\}$,
where
\begin{gather*}
\mathfrak D(d_1,t_2,\mathcal T)\equiv \{({\bf x},t):\, {\bf x}\in\mathcal D(E_\ast),\, d({\bf x},t)\leq d_1, \, t_2\leq t\leq t_2+\mathcal T\}.
\end{gather*}

Fix the parameters $\varepsilon_1\in (0,d_1)$  and $\varepsilon_2>0$. Let ${\bf x}(t)$ be a solution of system \eqref{FulSys} with initial data ${\bf x}_0$ such that $d({\bf x}_0,t_s)<\delta$ and $\tau_{\mathfrak D}$ be the first exit time of $({\bf x}(t), t)$ from the domain $\mathfrak D(\varepsilon_1, t_s, \mathcal T)$ with some $0 < \delta < \varepsilon_1 \leq d_1$ and $t_s\geq t_2$. Define the function $\tau_t = \min\{\tau_{\mathfrak D}, t\}$, then $({\bf x}(\tau_t), \tau_t)$ is the process stopped at the first exit time from the domain $\mathfrak D(\varepsilon_1, t_s, \mathcal T)$. 
From \eqref{Uprop} it follows that $U_\ast({\bf x}(\tau_t), \tau_t)$ is a non-negative supermartingale~\cite[\S 5.2]{RKh12}, and the following estimates hold:
\begin{eqnarray*}
\mathbb P\left(\sup_{0\leq t-t_s\leq \mathcal T} d({\bf x}(t),t)> \varepsilon_1\right)&= &
\mathbb P\left(\sup_{t\geq t_s} d({\bf x}(\tau_t),\tau_t)> \varepsilon_1\right)\\
&\leq & \mathbb P\left(\sup_{t\geq t_s} U_\ast({\bf x}(\tau_t),\tau_t)> \frac{\varepsilon_1^2}{2}\right)\leq \frac{2 U_\ast({\bf x}(t_s),t_s)}{\varepsilon_1^2}.
\end{eqnarray*}
The last estimate follows from Doob’s inequality for supermartingales. Note that $ U_\ast({\bf x}(t_s),t_s)\leq 2\delta^2 + 3 C^2 t_s^{-2/q}+\mu^2 C_3 \zeta_p(t_s)$ with $C_3=C_1+C_2$. Hence, taking $\delta=\varepsilon_1 \sqrt{\varepsilon_2/12}$,
\begin{gather*}
t_s=\begin{cases}
 \max\left\{t_2, \left(\frac{3C^2}{2 \delta^2}\right)^{\frac{q}{2}}\right\}, & 2p\leq q,\\
&\\
 \max\left\{t_2,  \left(\frac{3C^2}{2 \delta^2}\right)^{\frac{q}{2}}, \left(\frac{\mu^2 C q }{2 \delta^2 (2p-q)}\right)^{\frac{q}{2p-q}} \right\}, & 2p>q
\end{cases}
\end{gather*}
and $\mathcal T$ defined by \eqref{mathcalT} with
\begin{gather*} 
C_0=\begin{cases}
2 t_\ast^{\frac{2p}{q}} C_3^{-1}, & 2p< q \\  
2 C_3^{-1}, & 2p=q,
\end{cases}
\end{gather*}
we obtain \eqref{def1} with $\varsigma=0$.

2. Consider the case $n\geq m$. We use 
\begin{gather}\label{Uast2}
U_\ast({\bf x},t)=  \big(U_1({\bf x},t)\big)^2+\mu^2  C_1 \zeta_{p}(t) + c_\ast t^{-\frac{n-m}{q}}\left[\big(U_2({\bf x},t)-\phi_\varepsilon(t)\big)^2 +    C^2 t^{-\frac{2}{q}}+\mu^2  C_2 \zeta_{p}(t)\right]
\end{gather}
with $c_\ast= |\lambda_n(\phi_0)| |\vartheta_m|/(4\chi_m^2)>0$ as the Lyapunov function candidate. It can be proved, as in the previous case, that 
\begin{gather}\label{LUineq2}
\begin{split}
&\mathcal L({\bf x},{\bf a},{\bf A}) U_\ast({\bf x},t)\leq t^{-\frac{n}{q}}U_1^2 \left(-|\lambda_n(\phi_0)|+\mathcal O(\Delta) +\mathcal O(t^{-\frac{1}{q}})\right)\\
& +t^{-\frac{n}{q}}c_\ast \left(-|\vartheta_m| (U_2-\phi_\varepsilon(t))^2+2\chi_m U_1 |U_2-\phi_\varepsilon(t)|+\mathcal O(\Delta^3)\right)\left(1+\mathcal O(t^{-\frac{1}{q}})\right)\\
&+ Z_1({\bf x},t)+\mu^2 C_1 \zeta'_p(t)+c_\ast t^{-\frac{n-m}{q}}\left[Z_2({\bf x},t)+\mu^2 C_2 \zeta'_p(t)\right]
\end{split}
\end{gather}
as $\Delta({\bf x},t)\to 0$ and $t\to\infty$. We see that
\begin{eqnarray}\label{ineqaux2}
2c_\ast\chi_m U_1 |U_2-\phi_\varepsilon(t)|&\leq &   \frac{|\lambda_n(\phi_0)|}{2}  U_1^2+ \frac{c_\ast |\vartheta_m|}{2}  (U_2-\phi_\varepsilon(t))^2
\end{eqnarray}
for all ${\bf x}\in\mathcal D(E_\ast)$ and $t\geq t_\ast$. 
Hence, combining \eqref{Zineq}, \eqref{LUineq2} and \eqref{ineqaux2}, we obtain
\begin{gather*}
\mathcal L({\bf x},{\bf a},{\bf A}) U_\ast({\bf x},t)\leq -t^{-\frac{n}{q}} \frac{1}{4} \left( |\lambda_n(\phi_0)|U_1^2 +  c_\ast|\vartheta_m| (U_2-\phi_\varepsilon(t))^2\right) \left(1+\mathcal O(\Delta)+\mathcal O(t^{-\frac{1}{q}})\right).
\end{gather*} 
Therefore, there exist $\Delta_1>0$ and $t_1\geq t_\ast$ such that $\mathcal L({\bf x},{\bf a},{\bf A}) U_\ast({\bf x},t)\leq 0$ for all ${\bf x}\in\mathcal D(E_\ast)$ and $t\geq t_1$ such that $\Delta({\bf x},t)\leq \Delta_1$.
Thus, 
\begin{gather*} 
	U_\ast({\bf x},t)\geq t^{-\frac{n-m}{q}} \frac{c_-}{2}d^2({\bf x},t), \quad \mathcal L({\bf x},{\bf a},{\bf A}) U_\ast({\bf x},t)\leq 0
\end{gather*}
for all $({\bf x},t)\in\mathfrak D(d_1,t_2,\mathcal T)$,  with $c_-=\min\{1,c_\ast\}>0$, $d_1= \Delta_1/2>0$ and $t_2=\max\{t_1,(2C/\Delta_1)^q\}$. 

Fix the parameters $\varepsilon_1\in (0,d_1)$ and $\varepsilon_2>0$. Then, arguing as above, we obtain the following inequality:
\begin{gather*}
\mathbb P\left(\sup_{0\leq t-t_s\leq \mathcal T} \left\{t^{-\frac{n-m}{2q}} d({\bf x}(t),t)\right\}> \varepsilon_1\right) \leq \frac{2 U_\ast({\bf x}(t_s),t_s)}{\varepsilon_1^2 c_-},
\end{gather*}
where ${\bf x}(t)$ is a solution of system \eqref{FulSys} with initial data ${\bf x}_0$ such that $d({\bf x}_0,t_s)<\delta$ with some $0<\delta<\varepsilon_1$ and $t_s\geq t_2$. It follows easily that $ U_\ast({\bf x}(t_s),t_s)\leq c_+(2\delta^2 + 3 C^2 t_s^{-2/q}+\mu^2 C_3 \zeta_p(t_s))$ with $c_+=\max\{1,c_\ast\}$. Hence, by taking $\delta=\varepsilon_1 \sqrt{\varepsilon_2/(12 c_+)}$, and the parameters $t_s$ and $\mathcal T$ the same as in the previous case, we get \eqref{def1}  with $\varsigma=(n-m)/(2q)$.
\end{proof}

\begin{proof}[Proof of Lemma~\ref{Lem3}]
Substituting  $u=t^{-\kappa}(u_0+\eta_1)$ and $\phi=\phi_0+\eta_2$ into \eqref{DetSys} yields
\begin{gather}\label{eta12}
\frac{d\eta_1}{dt}=\mathcal A(\eta_1,\eta_2,t),
\quad 
\frac{d\eta_2}{dt}=\mathcal B(\eta_1,\eta_2,t),
\end{gather}
where
\begin{align*}
&\mathcal A(\eta_1,\eta_2,t)\equiv t^\kappa\Lambda_N(t^{-\kappa}(u_0+\eta_1),\phi_0+\eta_2,t)+\kappa t^{-1}(u_0+\eta_1),\\
&\mathcal B(\eta_1,\eta_2,t)\equiv \Omega_N(t^{-\kappa}(u_0+\eta_1),\phi_0+\eta_2,t).
\end{align*}
It follows from \eqref{Lnas2}, \eqref{Omas} and \eqref{kappau0} that
\begin{align*}
&\mathcal A(\eta_1,\eta_2,t)=t^{-\frac{n+l}{q}}\left(\mathcal A_1 \eta_1+\mathcal A_2 \eta_2+\mathcal O(\rho^2)+\mathcal O(t^{-\kappa_1})\right),\\
&\mathcal B(\eta_1,\eta_2,t)=t^{-\frac{m}{q}} \left(\vartheta_m \eta_2+\mathcal O(\eta_2^2)+ \mathcal O(t^{-\kappa_1})\right)
\end{align*}
as $t\to\infty$ and $\rho=\sqrt{\eta_1^2+\eta_2^2}\to 0$, where $\mathcal A_1=-(h-1)u_0^{h-1}|\lambda_{n,h}(\phi_0)|$, $\mathcal A_2=\lambda_{n,h}'(\phi_0)u_0^h+\lambda_{n+l}'(\phi_0)u_0$ and $\kappa_1=\min\{\kappa,q^{-1}\}$.

Consider
\begin{gather*}
L(\eta_1,\eta_2,t)=K\eta_2^{2}+\left(\eta_1+\frac{\mathcal A_2}{\mathcal A_1}\eta_2\right)^{2}, \quad K=\frac{|\vartheta_m| |\mathcal A_2|^2}{|\mathcal A_1|^3}
\end{gather*}
as a Lyapunov function candidate for system \eqref{eta12}. 
It can easily be checked that 
\begin{gather*}
K_- C_1^{-1}\rho^{2} \leq L(\eta_1,\eta_2,t)\leq K_+ C_1\rho^2, \quad C_1=2\left(1+\left( {\mathcal A_2} \mathcal A_1 ^{-1}\right)^{2}\right)>0,
\end{gather*}
$K_-=\min\{1,K\}>0$ and $K_+=\max\{1,K\}>0$.
The derivative of $L(\eta_1,\eta_2,t)$ along the
trajectories of system \eqref{eta12} is given by
\begin{gather*}
\frac{dL}{dt}\Big|_{\eqref{eta12}}=t^{-\frac{m}{q}}\left(2K \vartheta_m \eta_2^{2}+2 \vartheta_m \frac{\mathcal A_2}{\mathcal A_1} \left(\eta_1+\frac{\mathcal A_2}{\mathcal A_1}\eta_2\right)\eta_2 +2\mathcal A_1\left(\eta_1+\frac{\mathcal A_2}{\mathcal A_1}\eta_2\right)^2+\mathcal O(\rho^{3})+\mathcal O(t^{-\kappa_1})\right)
\end{gather*}
as $t\to\infty$ and $\rho\to 0$. 
Hence, there exist $K_0>0$, $t_1\geq t_\ast$ and $\rho_0>0$ such that
\begin{gather*}
\frac{dL}{dt}\leq t^{-\frac{m}{q}}\left( -C_2\rho^2+ K_0 t^{-\kappa_1}\right)
\end{gather*}
for all $t\geq t_1$ and $\rho\leq\rho_0$, where $C_2=K_- \min\{|\vartheta_m|,|\mathcal A_1|\}(2C_1)^{-1}>0$. For all $\varepsilon\in (0,\rho_0)$ define
\begin{gather*}
\delta_\varepsilon=\left(\frac{2 K_0}{ C_2t_\varepsilon^{\kappa_1}}\right)^{2}, \quad 
t_\varepsilon=\max\left\{t_1, \left(\frac{4 K_0}{C_2\varepsilon^{2}}\right)^{\frac{1}{\kappa_1}}\right\}.
\end{gather*}
Then, $dL/dt<0$ for solutions of \eqref{eta12} such that $\delta_\varepsilon \leq \sqrt{\eta_1^2(t)+\eta_2^2(t)}<\varepsilon$ as $t\geq t_\varepsilon$. Hence, any solution of \eqref{eta12} with initial data $\sqrt{\eta_1^2(t_\varepsilon)+\eta_2^2(t_\varepsilon)}\leq\delta_\varepsilon$ cannot leave the domain $\{(\eta_1,\eta_2)\in\mathbb R^2: \rho\leq \varepsilon\}$ as $t \geq t_\varepsilon$.
\end{proof}

\begin{proof}[Proof of Theorem~\ref{Th3}]

First, consider the case $\lambda_{n,h}(\phi_0)<0$ and $\lambda_{n+l}(\phi_0)<0$. In this case the proof is similar to that of Theorem~\ref{Th2}. Indeed, let $n+l<m$. Then we use $U_\ast({\bf x},t)$, defined by \eqref{Uast1}, as a Lyapunov function candidate. It follows easily that
\begin{align*} 
&\mathcal L({\bf x},{\bf a},{\bf A}) U_\ast({\bf x},t)=\\
& 2 t^{-\frac{n}{q}}U_1^{p+1} \left(\lambda_{n,h}(\phi_\varepsilon(t))+\mathcal O(\Delta) +\mathcal O(t^{-\frac{1}{q}})\right)+2 t^{-\frac{n+l}{q}}U_1^2 \left(\lambda_{n+l}(\phi_\varepsilon(t))+\mathcal O(\Delta) +\mathcal O(t^{-\frac{1}{q}})\right)\\
& +2 t^{-\frac{m}{q}} \left(\omega_{m,0}'(\phi_\varepsilon(t)) (U_2-\phi_\varepsilon(t))^2+\omega_{m,1}(\phi_\varepsilon(t))U_1 (U_2-\phi_\varepsilon(t))+\mathcal O(\Delta^3)\right)\left(1+\mathcal O(t^{-\frac{1}{q}})\right)\\
&+ Z_1({\bf x},t)+Z_2({\bf x},t)+\mu^2(C_1+C_2) \zeta'_p(t) -\frac{2C^2}{q}t^{-1-\frac{2}{q}}
\end{align*}
as $\Delta({\bf x},t)\to 0$ and $t\to\infty$. Choosing $\varepsilon>0$ in Lemma~\ref{Lem1} small enough ensures that 
\begin{gather*}
\lambda_{n,h}(\phi_\varepsilon(t))\leq -\frac{|\lambda_{n,h}(\phi_0)|}{2}, \quad \lambda_{n+l}(\phi_\varepsilon(t))\leq -\frac{|\lambda_{n+l}(\phi_0)|}{2}, \quad  
\omega_{m,0}'(\phi_\varepsilon(t))\leq - \frac{|\vartheta_m|}{2}
\end{gather*} 
as $t\geq t_\ast$. Taking into account \eqref{Yineq}, we obtain
\begin{align*} 
\mathcal L({\bf x},{\bf a},{\bf A}) U_\ast({\bf x},t)\leq& -\left(t^{-\frac{n}{q}}|\lambda_{n,h}(\phi_0)|U_1^{p+1}+ t^{-\frac{n+l}{q}}|\lambda_{n+l}(\phi_0)|U_1^2 + t^{-\frac{m}{q}}\frac{|\vartheta_m|}{2} (U_2-\phi_\varepsilon(t))^2\right) \\
& \times \left(1+\mathcal O(\Delta)+\mathcal O(t^{-\frac{1}{q}})\right)
\end{align*}
as $\Delta({\bf x},t)\to 0$ and $t\to\infty$. Hence, $U_\ast({\bf x},t)$ satisfies \eqref{Uprop}. By repeating the steps of the proof of Theorem~\ref{Th2}, we obtain \eqref{def1} with $\varsigma=0$.

If $n+l\geq m$, we consider $U_\ast({\bf x},t)$, defined by \eqref{Uast2}, with $c_\ast= |\lambda_{n+l}(\phi_0)| |\vartheta_m|/(4\chi_m^2)>0$ and $t^{-(n+l-m)/q}$ instead of $t^{-(n-m)/q}$ as a Lyapunov function candidate. In this case, 
\begin{align*} 
&\mathcal L({\bf x},{\bf a},{\bf A}) U_\ast({\bf x},t)\leq \\
&   t^{-\frac{n}{q}}U_1^{p+1} \left(-|\lambda_{n,h}(\phi_0)|+\mathcal O(\Delta) +\mathcal O(t^{-\frac{1}{q}})\right)+  t^{-\frac{n+l}{q}}U_1^2 \left(-|\lambda_{n+l}(\phi_0)|+\mathcal O(\Delta) +\mathcal O(t^{-\frac{1}{q}})\right)\\
& + c_\ast t^{-\frac{n+l}{q}} \left(-|\vartheta_m| (U_2-\phi_\varepsilon(t))^2+2 \chi_m U_1 |U_2-\phi_\varepsilon(t)|+\mathcal O(\Delta^3)\right)\left(1+\mathcal O(t^{-\frac{1}{q}})\right)\\
&+ Z_1({\bf x},t)+\mu^2 C_1 \zeta_p'(t)+c_\ast t^{-\frac{n+l-m}{q}} \left[Z_2({\bf x},t)+\mu^2 C_2 \zeta'_p(t)\right]
\end{align*}
as $\Delta({\bf x},t)\to 0$ and $t\to\infty$. Note that inequality \eqref{ineqaux2} holds with $\lambda_{n+l}(\phi_0)$ instead of $\lambda_{n}(\phi_0)$. It follows that 
\begin{align*} 
\mathcal L({\bf x},{\bf a},{\bf A}) U_\ast({\bf x},t)\leq& -\left(t^{-\frac{n}{q}}|\lambda_{n,h}(\phi_0)|U_1^{p+1}+ t^{-\frac{n+l}{q}}\frac{|\lambda_{n+l}(\phi_0)|}{4} U_1^2 + t^{-\frac{m}{q}}\frac{c_\ast|\vartheta_m|}{4} (U_2-\phi_\varepsilon(t))^2\right) \\
& \times \left(1+\mathcal O(\Delta)+\mathcal O(t^{-\frac{1}{q}})\right)
\end{align*}
as $\Delta({\bf x},t)\to 0$ and $t\to\infty$. Hence, there exist $\Delta_1>0$ and $t_1\geq t_\ast$ such that 
\begin{align*}
&	t^{-\frac{n+l-m}{q}}\frac{c_-}{2}d^2({\bf x},t) \leq  U_\ast({\bf x},t)\leq   c_+ \left(2 d^2({\bf x},t)+3 C^2 t^{-\frac{2}{q}}+ \mu^2 C_3\zeta_{2p}(t)\right),\\
&\mathcal L({\bf x},{\bf a},{\bf A}) U_\ast({\bf x},t)\leq 0
\end{align*}
for all ${\bf x}\in\mathcal D(E_\ast)$ such that $\Delta({\bf x},t)\leq \Delta_1$ and $t\geq t_1$ with $c_-=\min\{1,c\}>0$ and $c_+=\max\{1,c\}>0$. Therefore, repeating the proof of Theorem~\ref{Th2}, we get \eqref{def1} with $\varsigma=(n+l-m)/(2q)$.  

Now, let $\lambda_{n,h}(\phi_0)<0$, $\lambda_{n+l}(\phi_0)>0$ and $n+l=m$. 
Consider the functions
	\begin{gather*}
	\tilde U_1({\bf x},t)\equiv t^\kappa\left(U_1({\bf x},t)-u_\varepsilon(t)\right), \quad 
\tilde U_2({\bf x},t)\equiv U_2({\bf x},t)-\phi_\varepsilon(t).
	\end{gather*}
	 It follows from \eqref{Etheta} and \eqref{VNineq} that there exists $E_\ast\leq E_0$ such that
\begin{gather}
\label{tildeU1U2}
\begin{split}  
&\left|\left(\tilde U_1({\bf x},t)t^{-\kappa}+u_\varepsilon(t) \right) t^{-\frac{\ell}{q}}-\sqrt{H(x_1,x_2)}\right|\leq  C t^{-\frac{1}{q}}\sqrt{H(x_1,x_2)},\\  
&\left|\tilde U_2({\bf x},t)+\phi_\varepsilon(t)-\Phi(x_1,x_2)+\varkappa^{-1}S(t)\right|\leq C t^{-\frac{1}{q}}
\end{split}\end{gather}
for all ${\bf x}\in \mathcal D(E_\ast)$ and $t\geq t_\ast$ with some $C={\hbox{\rm const}}>0$.
Define the domain
\begin{gather*}
\widetilde{\mathcal D}(\tilde E_\ast,t_\ast)=\{({\bf x},t): \ \ (x_1,x_2)\in \mathcal B_0, \ \  t\geq t_\ast, \ \ H(x_1,x_2) t^{2 (\kappa+\ell/q)}\leq \tilde E_\ast\}
\end{gather*}
	with some $0<\tilde E_\ast\leq E_\ast t_\ast^{-2 (\kappa+\ell/q)} $. Hence, $\widetilde{\mathcal D}(\tilde E_\ast,t_\ast) \subset \mathcal D(E_\ast)\times\{t\geq t_\ast\}$.  
From \eqref{exch1}, \eqref{Etheta}, \eqref{exch11} and \eqref{classPert} it follows that the functions
\begin{gather*}
\tilde Z_1({\bf x},t)\equiv {\hbox{\rm tr}}\left({\bf A}^T({\bf x},t) {\bf N}_{\bf x}(\tilde U_1){\bf A}({\bf x},t)\right), \quad 
\tilde Z_2({\bf x},t)\equiv {\hbox{\rm tr}}\left({\bf A}^T({\bf x},t) {\bf N}_{\bf x}(\tilde U_2){\bf A}({\bf x},t)\right)
\end{gather*}
satisfies the estimates
\begin{gather*} 
 |\tilde Z_1({\bf x},t) |\leq \mu^2 \tilde C_1 t^{-\frac{2p}{q}} 
, \quad
 |\tilde Z_2({\bf x},t) |\leq \mu^2 \tilde C_2 t^{-\frac{2p}{q}}
\end{gather*}
for all $ ({\bf x},t)\in\widetilde{\mathcal D}(\tilde E_\ast,t_\ast)$ with some positive constants $\tilde C_1$ and $\tilde C_2$.

In this case, the Lyapunov function candidate for system \eqref{FulSys} is constructed in the following form: 
\begin{gather*} 
U_\ast({\bf x},t)=  K \big(\tilde U_1({\bf x},t)\big)^2  + \big(\tilde U_2({\bf x},t)\big)^2 +  K_1 \tilde C t^{-\frac{2}{q}}+\mu^2(K \tilde C_1+ \tilde C_2) \zeta_{p}(t)
\end{gather*}
with 
\begin{gather*}
K=\frac{|\vartheta_m||\mathcal A_1|}{(1+4|\mathcal A_2|)^2}, \quad 
K_1=\min\{1,K\}, \quad 
\tilde C=(1+\tilde E_\ast)C^2,
\end{gather*} 
and a positive function $\zeta_{p}(t)$ defined by \eqref{zetap} with some $t_s\geq t_\ast$.  
It can easily be checked that
\begin{eqnarray*}
\mathcal L({\bf x},{\bf a},{\bf A}) U_\ast({\bf x},t)& = &
2K t^\kappa \tilde U_1({\bf x},t)\Big(\Lambda_N\left(U_1({\bf x},t), U_2({\bf x},t),t\right)-\Lambda_N\left(u_\varepsilon(t), \phi_\varepsilon(t),t\right)\Big)\\
&&+ 2\tilde U_2({\bf x},t)\Big(\Omega_N\left(U_1({\bf x},t), U_2({\bf x},t),t\right) - \Omega_N\left(u_\varepsilon(t), \phi_\varepsilon(t),t\right)\Big) \\
&&+2\kappa K t^{-1} \left(\tilde U_1({\bf x},t)\right)^2  -\frac{2K_1 \tilde C}{q}t^{-1-\frac{2}{q}}\\
&& + K  \tilde Z_1({\bf x},t)+ \tilde Z_2({\bf x},t) +\mu^2 (K \tilde C_1+\tilde C_2) \zeta'_p(t)
\end{eqnarray*}
for all $ {\bf x} \in \mathcal D(E_\ast)$ and $t\geq t_\ast$. 
From \eqref{LO}, \eqref{LnOm}, \eqref{Lnas2}, \eqref{Omas} and the proof of Lemma~\ref{Lem3} it follows that
\begin{align*} 
&\mathcal L({\bf x},{\bf a},{\bf A}) U_\ast({\bf x},t)=\\
& 2 K t^{-\frac{m}{q}}\tilde U_1 \left(\mathcal A_{1,\varepsilon}(t)\tilde U_1+\mathcal A_{2,\varepsilon}(t)\tilde U_2+\mathcal O(\tilde \Delta^2) +\mathcal O(\tilde \Delta t^{-{\kappa_1}})\right)\\
& +2 t^{-\frac{m}{q}}\tilde U_2 \left(\vartheta_{m,\varepsilon}(t)\tilde U_2+\mathcal O(\tilde U_2^2)+\mathcal O(\tilde \Delta t^{-{\kappa_1}})\right)\\
&+ K \tilde Z_1({\bf x},t)+\tilde Z_2({\bf x},t)+\mu^2(K \tilde C_1+\tilde C_2) \zeta'_p(t) -\frac{2K_1 \tilde C}{q}t^{-1-\frac{2}{q}}
\end{align*}
as $\tilde \Delta({\bf x},t)\to 0$ and $t\to\infty$, where $\tilde \Delta({\bf x},t)=\sqrt{(\tilde U_1({\bf x},t))^2+(\tilde U_2({\bf x},t))^2}$, $\kappa_1=\min\{\kappa,q^{-1}\}$ and
\begin{eqnarray*}
\mathcal A_{1,\varepsilon}(t)&\equiv& \lambda_{n+l}(\phi_\varepsilon(t))+\kappa\delta_{n+l,q}+h \lambda_{n,h}(\phi_\varepsilon(t))(u_0+\varepsilon u_1(t))^{h-1}, \\
\mathcal A_{2,\varepsilon}(t)&\equiv& \lambda_{n+l}'(\phi_\varepsilon(t))(u_0+\varepsilon u_1(t))+\lambda_{n,h}'(\phi_\varepsilon(t))(u_0+\varepsilon u_1(t))^h,\\
\vartheta_{m,\varepsilon}(t)&\equiv& \omega_{m,0}'(\phi_\varepsilon(t)).
\end{eqnarray*} 
Note that by choosing $\varepsilon>0$ in Lemma~\ref{Lem3} small enough, we obtain the following inequalities: 
\begin{gather*}
\mathcal A_{1,\varepsilon}(t)\leq -\frac{|\mathcal A_1|}{2}, \quad |\mathcal A_{2,\varepsilon}(t)|\leq \frac{1+4  |\mathcal A_2|}{2}, \quad  
\vartheta_{m,\varepsilon}(t)\leq - \frac{|\vartheta_m|}{2}
\end{gather*} 
for all $t\geq t_\ast$. We see that
\begin{eqnarray}\label{ineqaux3}
K (1+4|\mathcal A_2|) |\tilde U_1 \tilde U_2| & \leq & 
\frac{K|\mathcal A_1|}{2}  \tilde U_1^2+ \frac{ |\vartheta_m|}{2}  \tilde U_2^2
\end{eqnarray}
for all ${\bf x}\in\tilde {\mathcal D}(E_\ast)$ and $t\geq t_\ast$. 
Combining \eqref{Zineq}, \eqref{LUineq2} and \eqref{ineqaux3}, we obtain
\begin{gather*}
\mathcal L({\bf x},{\bf a},{\bf A}) U_\ast({\bf x},t)\leq -t^{-\frac{m}{q}} \frac{1}{4} \left(K |\mathcal A_1|\tilde U_1^2 +   |\vartheta_m| \tilde U_2^2 \right) \left(1+\mathcal O(\tilde \Delta)+\mathcal O(t^{-\kappa_1})\right)
\end{gather*}  
as $\tilde \Delta({\bf x},t)\to 0$ and $t\to\infty$. Hence, there exist $\tilde \Delta_1>0$ and $t_1\geq t_\ast$ such that 
$
	\mathcal L({\bf x},{\bf a},{\bf A}) U_\ast({\bf x},t)\leq 0
$
for all ${\bf x}\in\mathcal D(E_\ast)$ and $t\geq t_1$ such that $\tilde\Delta({\bf x},t)\leq \tilde\Delta_1$.
It follows from \eqref{tildeU1U2} that
\begin{gather*}
\frac{\tilde d^2({\bf x},t)}{2}\leq \tilde \Delta^2({\bf x},t)+\tilde C t^{-\frac{2}{q}}\leq 2 \tilde d^2({\bf x},t)+5 \tilde C t^{-\frac{2}{q}}
\end{gather*}
for all $({\bf x},t)\in\widetilde {\mathcal D}(\tilde E_\ast,t_\ast)$. Therefore, 
\begin{gather*} 
	U_\ast({\bf x},t)\geq  \frac{K_1}{2} \tilde d^2({\bf x},t), \quad \mathcal L({\bf x},{\bf a},{\bf A}) U_\ast({\bf x},t)\leq 0
\end{gather*}
for all $({\bf x},t)\in\tilde {\mathfrak D}(d_1,t_2,\mathcal T)$ with $d_1= \tilde \Delta_1/2>0$ and $t_2=\max\{t_1,(2\sqrt{2\tilde C}/\tilde \Delta_1)^q\}$,
where
\begin{gather*}
\tilde{\mathfrak D}(d_1,t_2,\mathcal T)=\{({\bf x},t)\in \tilde {\mathcal D}(E_\ast,t_\ast): \ \ \tilde d({\bf x},t)\leq d_1, \ \ t_2\leq t\leq t_2+\mathcal T\}.
\end{gather*}

Let us fix the parameters $\varepsilon_1\in (0,d_1)$  and $\varepsilon_2>0$. 
Then, arguing as in the proof of Theorem~\ref{Th2}, we obtain the following inequality:
\begin{eqnarray*}
\mathbb P\left(\sup_{0\leq t-t_s\leq \mathcal T} \tilde d({\bf x}(t),t)> \varepsilon_1\right)\leq \frac{2 U_\ast({\bf x}(t_s),t_s)}{K_1 \varepsilon_1^2},
\end{eqnarray*}
where ${\bf x}(t)$ is a solution of system \eqref{FulSys} with initial data ${\bf x}_0$ such that $\tilde d({\bf x}_0,t_s)<\delta$ with some $0 < \delta < \varepsilon_1 \leq d_1$ and $t_s\geq t_2$. Note that $ U_\ast({\bf x}(t_s),t_s)\leq K_2 ( 2 \delta^2+ 5  \tilde C t_s^{-{2}/{q}}+\mu^2\tilde C_3 \zeta_{p}(t_s))$ with $\tilde C_3=\tilde C_1+\tilde C_2$. Hence, taking $\delta=\varepsilon_1 \sqrt{K_1\varepsilon_2/(12 K_2)}$,
\begin{gather*}
t_s=\begin{cases}
 \max\left\{t_2, \left(\frac{5 \tilde C^2}{2 \delta^2}\right)^{\frac{q}{2}}\right\}, & 2p\leq q,\\
&\\
 \max\left\{t_2,  \left(\frac{5 \tilde C^2}{2 \delta^2}\right)^{\frac{q}{2}}, \left(\frac{\mu^2 \tilde C_3 q }{2 \delta^2 (2p-q)}\right)^{\frac{q}{2p-q}} \right\}, & 2p>q
\end{cases}
\end{gather*}
and $\mathcal T$ defined by \eqref{mathcalT} with
\begin{gather*} 
C_0=\begin{cases}
2 t_\ast^{\frac{2p}{q}} \tilde C_3^{-1}, & 2p< q \\ 
2 \tilde C_3^{-1}, & 2p=q,
\end{cases}
\end{gather*}
we obtain \eqref{deftilde}.
\end{proof}

\section{Stability of phase drifting}
\label{SPD}
\begin{proof}[Proof of Theorem~\ref{Th4}]
We take $N=\max\{n,m\}$ in Theorem~\ref{Th1} and consider the functions $U_1({\bf x},t)$ and $U_2({\bf x},t)$ defined by \eqref{U1U2}. It follows from \eqref{H0as}, \eqref{Etheta} and \eqref{U1U2est} that 
for all $\epsilon\in (0,1)$ there exists $r_\ast\leq r_0$ such that
\begin{gather*}
(1-\epsilon)^2\frac{|{\bf x}|}{\sqrt 2}\leq t^{-\frac{\ell}{q}}U_1({\bf x},t)\leq (1+\epsilon)^2\frac{|{\bf x}|}{\sqrt 2}
\end{gather*} 
for all $|{\bf x}|\leq r_\ast$ and $t\geq t_\ast$.
Taking $U_1({\bf x},t)$ as the Lyapunov function for system \eqref{FulSys}, it can be seen that 
\begin{gather*}
\mathcal L({\bf x},{\bf a},{\bf A})U_1({\bf x},t)  \equiv  \mathcal L({\bf y},{\bf f},{\bf F}) V_N(R,\theta,t) \equiv \Lambda_N\left(U_1({\bf x},t), U_2({\bf x},t),t\right).
\end{gather*} 
From \eqref{LO}, \eqref{LnOm} and \eqref{Lnas1} it follows that
\begin{gather*} 
 \mathcal L({\bf x},{\bf a},{\bf A}) U_1({\bf x},t)=t^{-\frac{n}{q}}U_1({\bf x},t) \left(\lambda_n(U_2({\bf x},t) )+\mathcal O(U_1({\bf x},t)) +\mathcal O(t^{-\frac{1}{q}})\right)
\end{gather*}
as $U_1({\bf x},t)\to 0$ and $t\to\infty$. Note that there exists $\lambda_n^\ast>0$ such that $\lambda_n(\psi)\leq -\lambda_n^\ast$ for all $\psi\in\mathbb R$. Hence, there exist $t_s\geq t_\ast$ and $0<U_s\leq  r_\ast t_s^{\ell/q}(1-\epsilon)^2/\sqrt 2$ such that
\begin{gather}\label{U1ets}
 \mathcal L({\bf x},{\bf a},{\bf A}) U_1({\bf x},t)\leq -t^{-\frac{n}{q}} U_1({\bf x},t) \frac{\lambda_n^\ast}{2}\leq 0
\end{gather} 
for all $|{\bf x}|\leq r_\ast$ and $t \geq t_s$ such that $ 0 \leq U_1({\bf x},t)\leq U_s$.

Fix the parameters $0<\varepsilon_1<r_\ast t_s^{\ell/q}$ 	and $\varepsilon_2>0$. Let ${\bf x}(t)$ be a solution of system \eqref{FulSys} with initial data $|{\bf x}(t_s)|\leq \delta$, $0<\delta<\varepsilon_1$ and $\tau_{\varepsilon_1}$ be the first exit time of $\tilde {\bf x}(t)\equiv {\bf x}(t)t^{\ell/q}$ from the domain $\mathcal B_{\varepsilon_1}=\{|{\bf x}|\leq \varepsilon_1\}$ as $t\geq t_s$. Define the function $\tau_t = \min\{\tau_{\varepsilon_1}, t\}$, then $\tilde {\bf x}(\tau_t)$ is the process stopped at the first exit time from  $\mathcal B_{\varepsilon_1}$.  From \eqref{U1ets} it follows that $U_1({\bf x}(\tau_t), \tau_t)$ is a non-negative supermartingale, and the following estimates hold:
\begin{eqnarray*}
\mathbb P\left(\sup_{t\geq t_s} |\tilde {\bf x}(t)|> \varepsilon_1\right)&= &
\mathbb P\left(\sup_{t\geq t_s}  |\tilde {\bf x}(\tau_t)|>\varepsilon_1\right)\\
&\leq & \mathbb P\left(\sup_{t\geq t_s} U_1({\bf x}(\tau_t),\tau_t)> \frac{(1-\epsilon)^2\varepsilon_1}{\sqrt 2}\right)\leq \frac{\sqrt 2 U_1({\bf x}(t_s),t_s)}{(1-\epsilon)^2 \varepsilon_1}.
\end{eqnarray*}
Note that $U_1({\bf x}(t_s),t_s)\leq t_s^{\ell/q} (1+\epsilon)^2\delta/\sqrt 2$. Hence, by taking $\delta=\varepsilon_1 \varepsilon_2 [(1-\epsilon)/(1+\epsilon)]^2 t_s^{-\ell/q}$, we obtain \eqref{defTh4}.
\end{proof}

\begin{proof}[Proof of Theorem~\ref{Th5}] 
The proof is similar to that of Theorem~\ref{Th4}. As above, we use $U_1({\bf x},t)$ as the Lyapunov function candidate for system \eqref{FulSys}. In this case, from \eqref{LO}, \eqref{LnOm} and \eqref{Lnas2} it follows that
\begin{eqnarray*} 
 \mathcal L({\bf x},{\bf a},{\bf A}) U_1({\bf x},t)&=&t^{-\frac{n}{q}}U_1^p({\bf x},t) \left(\lambda_{n,h}(U_2({\bf x},t) )+\mathcal O(U_1({\bf x},t)) +\mathcal O(t^{-\frac{1}{q}})\right) \\
& & +t^{-\frac{n+l}{q}}U_1({\bf x},t) \left(\lambda_{n+l}(U_2({\bf x},t) )+\mathcal O(U_1({\bf x},t)) +\mathcal O(t^{-\frac{1}{q}})\right)
\end{eqnarray*}
as $U_1({\bf x},t)\to 0$ and $t\to\infty$. Note that $\lambda_{n,h}(\psi)\leq -\lambda_{n,h}^\ast$ and $\lambda_{n+l}(\psi)\leq -\lambda_{n+l}^\ast$ for all $\psi\in\mathbb R$ with some constants $\lambda_{n,h}^\ast>0$ and  $\lambda_{n+l}^\ast>0$. Hence, there exist $t_s\geq t_\ast$ and $U_s>0$ such that $ \mathcal L({\bf x},{\bf a},{\bf A}) U_1({\bf x},t)\leq 0$ for all $|{\bf x}|\leq r_\ast$ and $t \geq t_s$ such that $ 0 \leq U_1({\bf x},t)\leq U_s$. Finally, by repeating the next steps of the proof of Theorem~\ref{Th4}, we obtain \eqref{defTh4}.
\end{proof}

\section{Examples}\label{SEx}
Consider the examples of asymptotically autonomous stochastic systems with damped oscillatory coefficients and discuss the application of the proposed theory.

\subsection{Example 1}
First, consider a linear stochastic system
\begin{gather}\label{Ex1}
\begin{split}
&dx_1=x_2\,dt, \\
&dx_2=\left (-x_1+t^{-1}\left[a\big(S(t)\big)x_1+b\big(S(t)\big)x_2\right]\right)\,dt+t^{-\frac{p}{2}}c\big(S(t)\big)x_1\,dw_2(t), \quad t\geq 1,
\end{split}
\end{gather}
where 
\begin{gather*}
a(S)\equiv a_0+a_1\cos S,\quad 
b(S)\equiv b_0+b_1\cos S,\quad 
c(S)\equiv c_0+c_1\cos S,\quad 
S(t)\equiv s_0 t+s_2\log t
\end{gather*}
 with $p\in\mathbb Z_{+}$ and constant parameters $a_k$, $b_k$, $c_k$ and $s_k$. Note that system \eqref{Ex1} is of the form \eqref{FulSys} with $q=2$, 
\begin{gather*}
{\bf a}({\bf x},t)\equiv {\bf a}_0({\bf x})+t^{-1}{\bf a}_2({\bf x},S(t)), \quad 
{\bf A}({\bf x},t)\equiv t^{-\frac{p}{2}}{\bf A}_p({\bf x},S(t)),\\
{\bf a}_0({\bf x})\equiv \begin{pmatrix}x_2\\-x_1\end{pmatrix}, \quad 
{\bf a}_2({\bf x},S)\equiv \begin{pmatrix}0\\a(S)x_1+b(S)x_2\end{pmatrix}, \quad 
{\bf A}_p({\bf x},S)\equiv \begin{pmatrix}0&0\\0& c(S)x_1\end{pmatrix}.
\end{gather*}
The corresponding limiting system \eqref{LimSys} with $H(x_1,x_2)\equiv |{\bf x}|^2/2$ has a stable equilibrium at the origin $(0,0)$ and $2\pi$-periodic solutions with $\nu(E)\equiv 1$. Moreover, system \eqref{Ex0}, considered as an example in Section~\ref{PS}, has the form \eqref{Ex1} with $a_0=a_1=b_1=c_0=s_2=0$.  

1. Let $p=1$. Then the transformations of variables described in Theorem~\ref{Th1} and constructed in Section~\ref{ChOfVar} with $\ell=0$, $N=2$, $\xi_1(\phi,E)\equiv \sqrt{2E}\cos \phi$, $\xi_2(\phi,E)\equiv -\sqrt{2E}\sin \phi$,  $\tilde v_2(R,\theta,t)\equiv t^{-1}v_2(R,\theta,S(t))$, $\tilde \psi_2(R,\theta,t)\equiv t^{-1}\psi_2(R,\theta,S(t))$, and
\begin{align*}
&v_2 \equiv    R\left\{\int\limits_0^S \left\{ \frac{a(s)}{2} \sin\left(2\theta+\frac{2s}{\varkappa}\right)-b(s)   \sin^2\left(\theta+\frac{s}{\varkappa}\right)-\frac{c^2(s)}{2}  \cos^4\left(\theta+\frac{s}{\varkappa}\right)\right\}_{\varkappa s} ds\right\}_{\varkappa S}, 
\\ 
&\psi_2 \equiv  \left\{\int\limits_0^S \left\{a(s) \cos^2\left(\theta+\frac{s}{\varkappa}\right)-\frac{b(s)}{2} \sin\left(2\theta+\frac{2s}{\varkappa}\right) -\frac{c^2(s)}{4}\partial_\theta \left(\cos^4\left(\theta+\frac{s}{\varkappa}\right)\right)\right\}_{\varkappa s} ds\right\}_{\varkappa S}
\end{align*}
reduce system \eqref{Ex1} to \eqref{EQs} with $\Lambda_1(v,\psi)\equiv \Omega_1(v,\psi)\equiv 0$, $\Lambda_2(v,\psi)\equiv  \lambda_2(\psi)v$, $\Omega_2(v,\psi)\equiv \omega_{2,0}(\psi)$, where
the form of the functions $\lambda_2(\psi)$ and $\omega_{2,0}(\psi)$ depends on the value of the parameter $s_0$. Hence, the transformed system satisfies \eqref{LnOm}, \eqref{Lnas1} and \eqref{Omas} with $n=m=2$. Moreover, assumption \eqref{classPert} holds with $\mu= |c_0|+|c_1| $ and $2p/q=1$.

Consider first the case $s_0=1$. Then it follows from \eqref{rescond} that $\varkappa=1$ and 
\begin{gather*}
\lambda_2(\psi)\equiv \frac{1}{32}\left(16b_0+6c_0^2+3c_1^2+2c_1^2\cos 2\psi\right), \quad
\omega_{2,0}(\psi)\equiv -\frac{1}{16} \left(8a_0+16s_2+c_1^2\sin 2\psi \right).
\end{gather*}
It is readily seen that if $|8a_0+16s_2|< c_1^2$, then assumption \eqref{zerom} holds with
\begin{gather*}
\phi_0=\frac{1}{2}\arcsin\left(-\frac{8a_0+16s_2}{c_1^2}\right)+\pi j, \quad j\in\mathbb Z,\quad \vartheta_2=-\frac{c_1^2}{8} \cos 2\phi_0<0.
\end{gather*}
It follows that Theorem~\ref{Th2} is applicable with $n=m=q=2p=2$ and
\begin{gather*}
d({\bf x},t)\equiv \sqrt{\frac{|{\bf x}|^2}{2}+\left|\Phi(x_1,x_2) -S(t)-\phi_\varepsilon(t)\right|} 
\end{gather*}
with $\phi_\varepsilon(t)=\phi_0+o(1)$ as $t\to\infty$, where $\tan \Phi(x_1,x_2)\equiv -x_2/x_1$. Hence, if $b_0<b_\ast$ with
\begin{gather*}
b_\ast=-\frac{6c_0^2+3c_1^2}{16}-\frac{1}{8}\sqrt{c_1^4-(8a_0+16s_2)^2},
\end{gather*}
then the phase locking occurs in system \eqref{Ex1} and the equilibrium $(0,0)$ is stable on exponentially long time interval as $\mu\to 0$ (see~Fig.~\ref{FigEx11}). By applying this result to the example from Section~\ref{PS}, we see that the phase locking occurs in system \eqref{Ex0} and the equilibrium is stable if $b<b_\ast$, where $b_\ast=-5/16$.
\begin{figure}
\centering
   \includegraphics[width=0.4\linewidth]{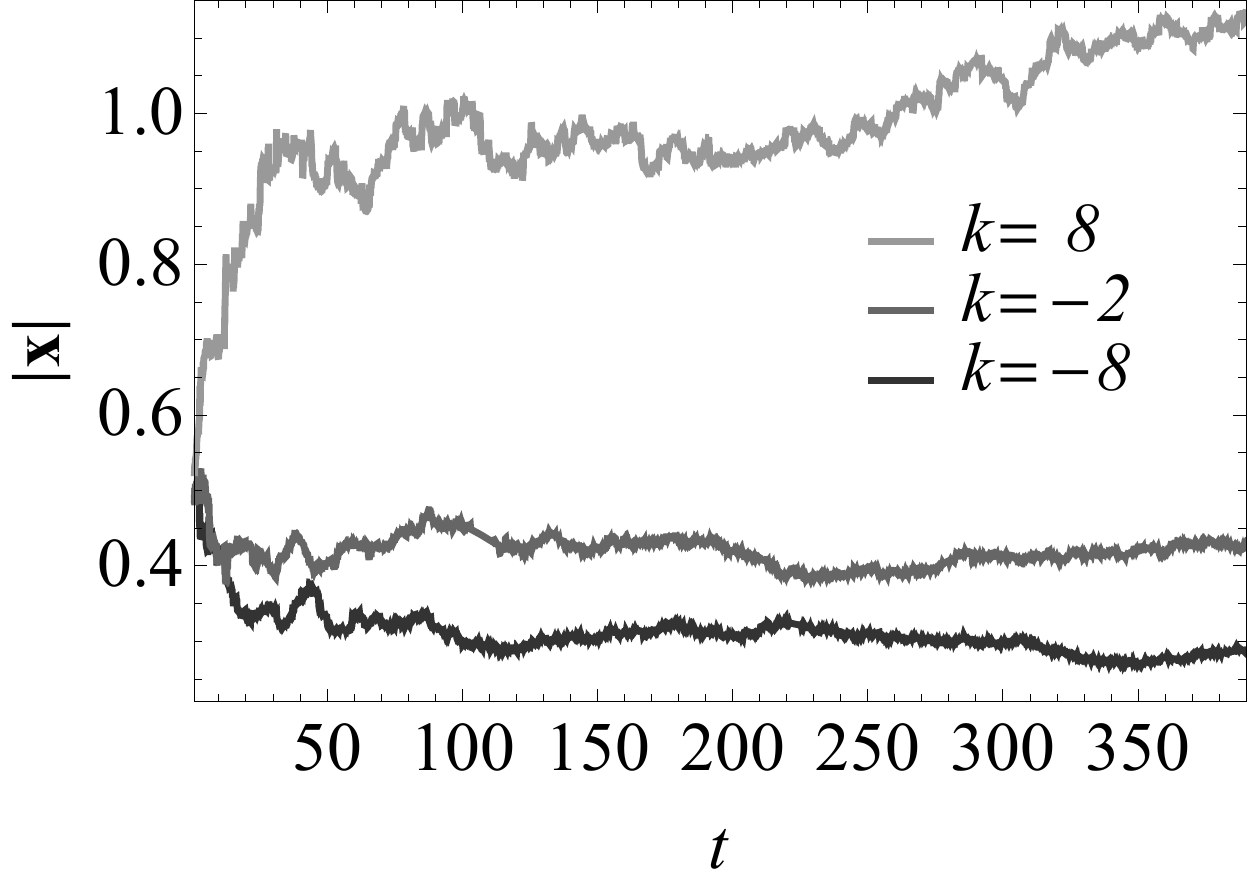}
\hspace{1ex}
   \includegraphics[width=0.4\linewidth]{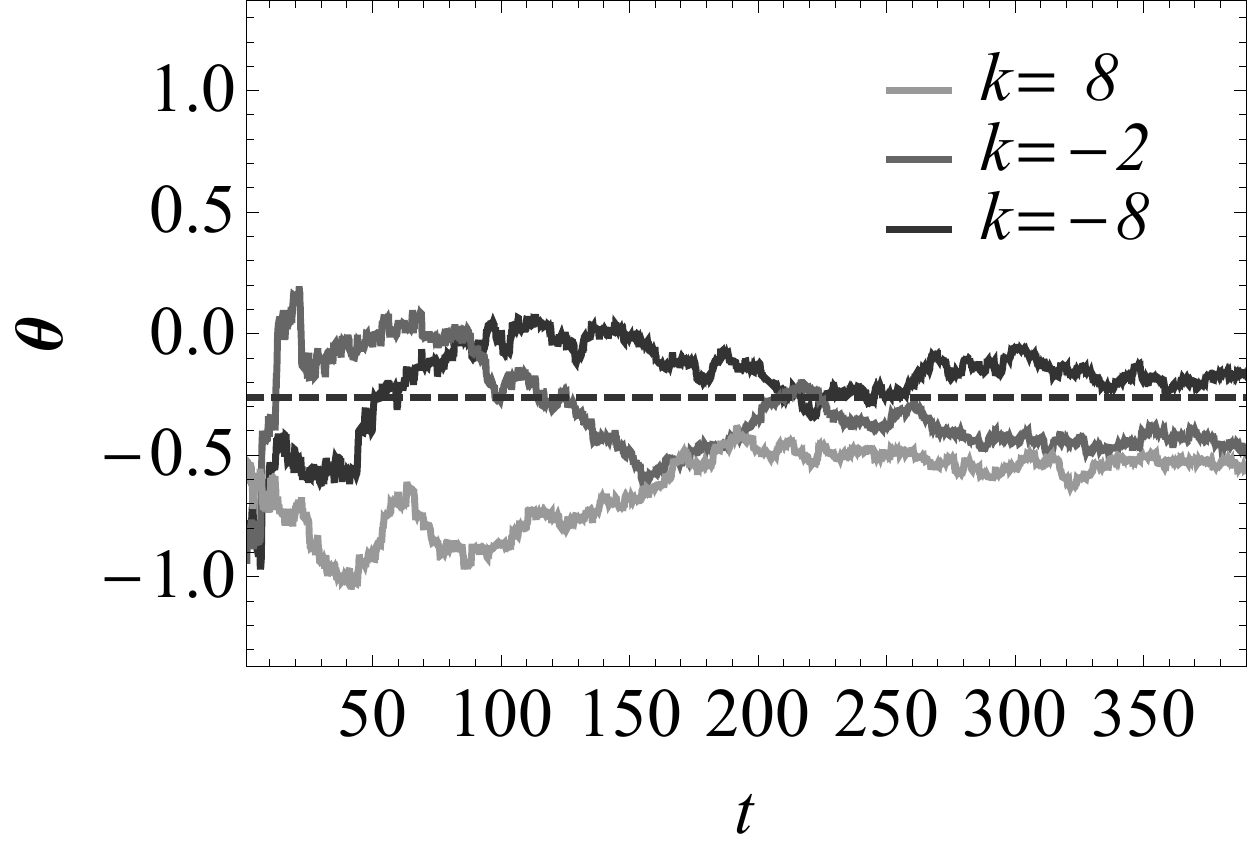}
\caption{\small The evolution of $|{\bf x}(t)|$ and $\theta(t)=\Phi(x_1(t),x_2(t))-S(t)$ for sample paths of solutions to system \eqref{Ex1} with $a_0=\mu^2/16$, $b_0=k\mu^2/16$, $c_1=\mu$, $s_0=1$, $a_1=b_1=c_0=s_2=0$, $\mu=0.5$. The black dashed curve corresponds to $\theta(t)\equiv \phi_0$, where $\phi_0=-\pi/12$. In this case, $b_0<b_\ast$ if and only if $k<-(3+\sqrt 3)\approx -4.73$.} \label{FigEx11}
\end{figure}

If $|8a_0+16s_2|> c_1^2$, then assumption \eqref{nzerom} holds and Theorem~\ref{Th4} is applicable with $m=q=2$. Hence, the phase drifting regime is realized, and the equilibrium $(0,0)$ is stable if $b_0<b_\ast$ with $b_\ast=-(6c_0^2+5c_1^2)/16$ (see~Fig.~\ref{FigEx12}).
\begin{figure}
\centering
  \includegraphics[width=0.4\linewidth]{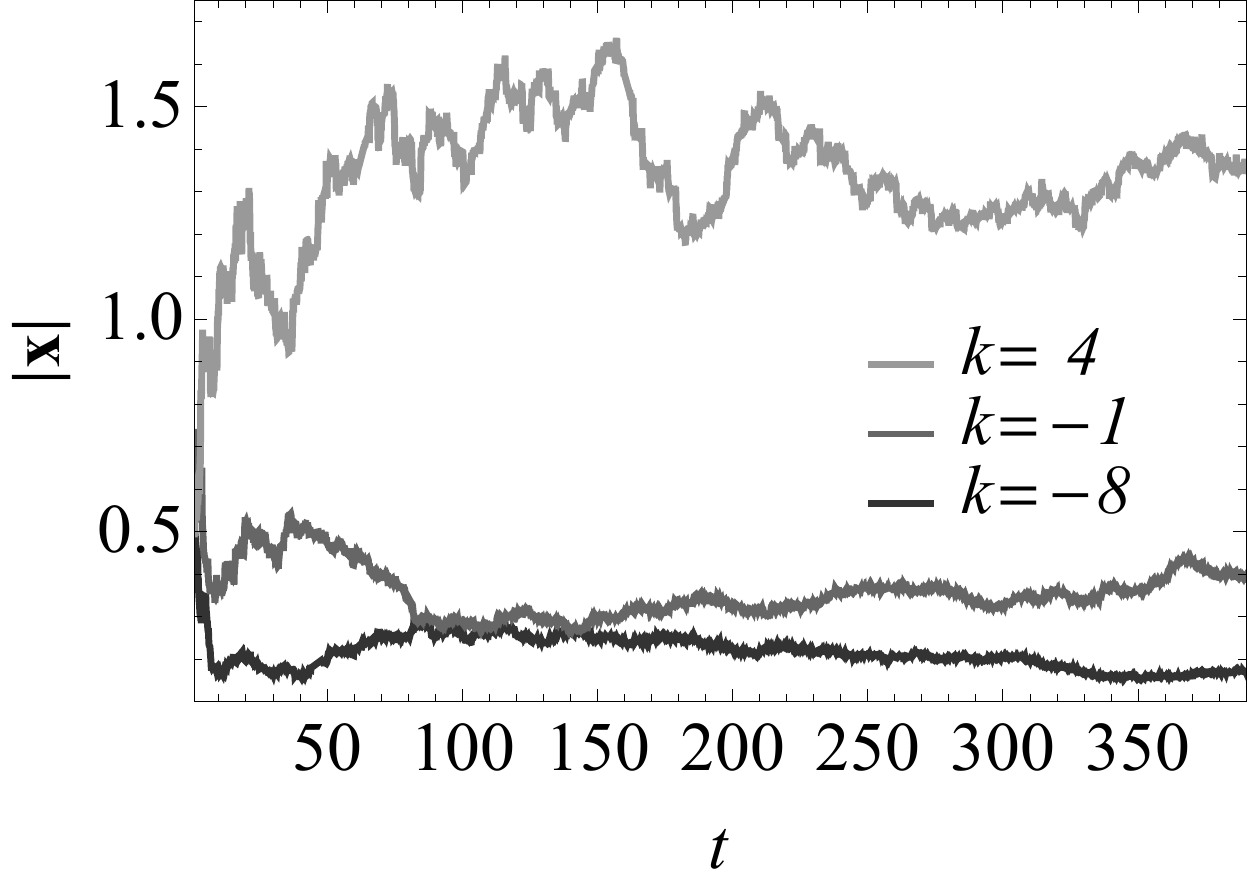}
\hspace{1ex}
  \includegraphics[width=0.4\linewidth]{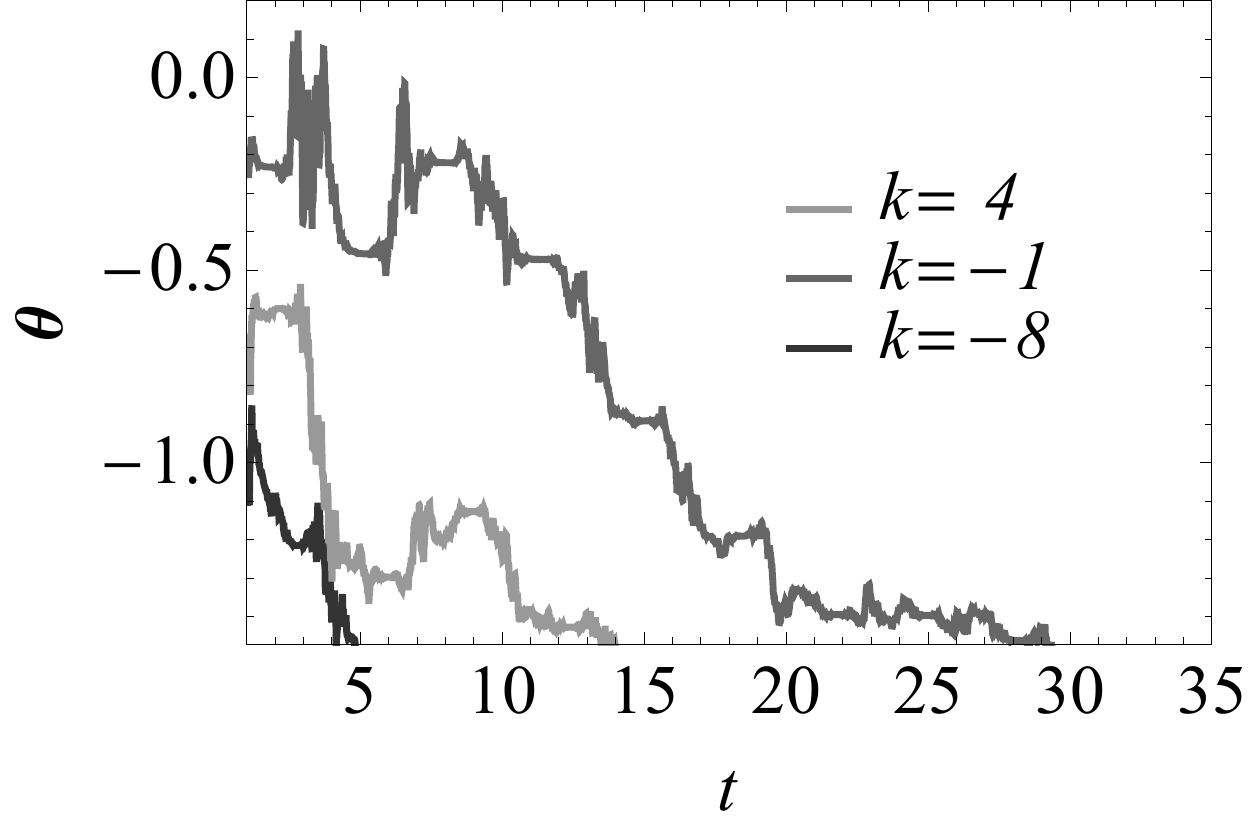}
\caption{\small The evolution of $|{\bf x}(t)|$ and $\theta(t)=\Phi(x_1(t),x_2(t))-S(t)$ for sample paths of solutions to system \eqref{Ex1} with $a_0=1/2$, $b_0=k/16$, $c_1=1$, $s_0=1$, $a_1=b_1=c_0=s_2=0$. In this case, $b_0<b_\ast$ if and only if $k<-5$.} \label{FigEx12}
\end{figure}

Consider now the case $s_0=2$. We have  $\varkappa=2$ and 
\begin{gather*}\lambda_2(\psi)\equiv \frac{1}{64}\left(32b_0+12c_0^2+6c_1^2+c_1^2\cos 4\psi-16 (a_1 \sin2\psi+(b_1-c_0c_1)\cos2\psi)\right), \\
\omega_{2,0}(\psi)\equiv -\frac{1}{32} \left(16(a_0+s_2)+c_1^2\sin 4\psi+8 (a_1 \cos2\psi-(b_1-c_0c_1)\sin2\psi) \right).
\end{gather*}
If $a_1=b_1=c_0=0$ and $16|a_0+s_2|<c_1^2$, then condition \eqref{zerom} holds with
\begin{gather*}
\phi_0=\frac{1}{4}\arcsin\left(-\frac{16(a_0+s_2)}{c_1^2}\right)+\frac{\pi j}{2}, \quad j\in\mathbb Z,\quad \vartheta_2=-\frac{c_1^2}{8} \cos 4\phi_0<0.
\end{gather*}
Moreover, if $b<b_\ast$ with
\begin{gather*}
b_\ast=-\frac{3c_1^2}{16}-\frac{1}{32}\sqrt{c_1^4-256(a_0+s_2)^2},
\end{gather*}
then from Theorem~\ref{Th2} with $n=m=q=2p=2$, and
\begin{gather*}
d({\bf x},t)\equiv \sqrt{\frac{|{\bf x}|^2}{2}+\left|\Phi(x_1,x_2)-\frac{S(t)}{2}-\phi_\varepsilon(t)\right|}
\end{gather*}
it follows that the phase locking regime is realized, and the equilibrium $(0,0)$ is stable.
If $a_1=b_1=c_0=0$ and $16|a_0+s_2|>c_1^2$, then condition \eqref{nzerom} holds. If, in addition, $b_0<- 7c_1^2/32$, then it follows from Theorem~\ref{Th4} with $m=q=2$ that the phase drifting mode takes place and the equilibrium $(0,0)$ is stable.

Consider also the case $s_0=3$. We take $\varkappa=3$ and obtain
\begin{gather*}
\lambda_2(\psi)\equiv \frac{16b_0+6c_0^2+3c_1^2}{32}, \quad \omega_{2,0}(\psi)\equiv -\frac{3a_0+2s_2}{6}.
\end{gather*}
Clearly, if $3a_0+2s_2\neq 0$, then assumption \eqref{nzerom} holds. In this case, the phase drifting regime occurs. Moreover, if $b_0<-(6c_0^2+3c_1^2)/16$, then from Theorem~\ref{Th4} it follows that the equilibrium $(0,0)$ is stable in system \eqref{Ex1}.

2. Now let $p=2$. Then the change of variables described in Theorem~\ref{Th1} with $\ell=0$, $N=1$, $\xi_1(\phi,E)\equiv \sqrt{2E}\cos \phi$, $\xi_2(\phi,E)\equiv -\sqrt{2E}\sin \phi$,  $\tilde v_1(R,\theta,t)\equiv t^{-1/2}v_1(R,\theta,S(t))$, $\tilde \psi_1(R,\theta,t)\equiv t^{-1/2}\psi_1(R,\theta,S(t))$, and
\begin{align*}
&v_1 \equiv    R\left\{\int\limits_0^S \left\{ \frac{a(s)}{2} \sin\left(2\theta+\frac{2s}{\varkappa}\right)-b(s)   \sin^2\left(\theta+\frac{s}{\varkappa}\right) \right\}_{\varkappa s} ds\right\}_{\varkappa S}, 
\\ 
&\psi_1 \equiv  \left\{\int\limits_0^S \left\{a(s) \cos^2\left(\theta+\frac{s}{\varkappa}\right)-\frac{b(s)}{2} \sin\left(2\theta+\frac{2s}{\varkappa}\right) \right\}_{\varkappa s} ds\right\}_{\varkappa S}
\end{align*}
transforms system \eqref{Ex1} to \eqref{EQs} with $\Lambda_1(v,\psi)\equiv \lambda_1(\psi)v$, $\Omega_1(v,\psi)\equiv \omega_{1,0}(\psi)$. We see that the system satisfies \eqref{LnOm}, \eqref{Lnas1} and \eqref{Omas} with $n=m=1$ and assumption \eqref{classPert} holds with $\mu= |c_0|+|c_1| $ and $2p/q=2$.

Consider the case $s_0=1$. Then $\varkappa=1$ and 
\begin{gather*}
\lambda_1(\psi)\equiv \frac{b_0}{2}, \quad
\omega_{1,0}(\psi)\equiv -\frac{a_0+2s_2}{2}.
\end{gather*}
It is readily seen that if $a_0+2s_2\neq 0$, then assumption \eqref{nzerom} holds and Theorem~\ref{Th4} is applicable with $m<q$. Hence, the phase drifting regime takes place, and the equilibrium $(0,0)$ is stable if $b_0<0$ (see Fig.~\ref{Fig12}, b).
 
In the case $s_0=2$, we have $\varkappa=2$ and
\begin{gather*}
\lambda_1(\psi)\equiv \frac{1}{4}\left(2b_0-d_1 \sin (2\psi+\delta_1)\right), \quad
\omega_{1,0}(\psi)\equiv -\frac{1}{4}\left(a_0+s_2+8d_1\cos(2\psi+\delta_1)\right),
\end{gather*}
where $d_1=\sqrt{a_1^2+b_1^2}$ and $\delta_1=\arccos(a_1/d_1)$.
If $|a_0+s_2|< 8 d_1$, then assumption \eqref{zerom} holds with
\begin{gather*}
\phi_0=-\frac{\delta_1}{2}-\frac{1}{2}\arccos\left(-\frac{a_0+s_2}{8d_1}\right)+\pi j, \quad j\in\mathbb Z,\quad \vartheta_1=4d_1 \sin(2\phi_0+\delta_1)<0,
\end{gather*}
and Theorem~\ref{Th2} is applicable with $n=m<q<2p$. Hence, the phase locking regime is realized  (see~Fig.~\ref{FigEx13}), and the equilibrium $(0,0)$ is stable  if 
\begin{gather*}
b_0<-\frac{1}{16}\sqrt{64d_1^2-(a_0+s_2)^2}.
\end{gather*}
If $|a_0+s_2|> 8 d_1$, then assumption \eqref{nzerom} holds. Hence, it follows from Theorem~\ref{Th4} that the phase drifting regime occurs, and the equilibrium $(0,0)$ is stable if $b_0<-d_1/2$.

\begin{figure}
\centering
  \includegraphics[width=0.4\linewidth]{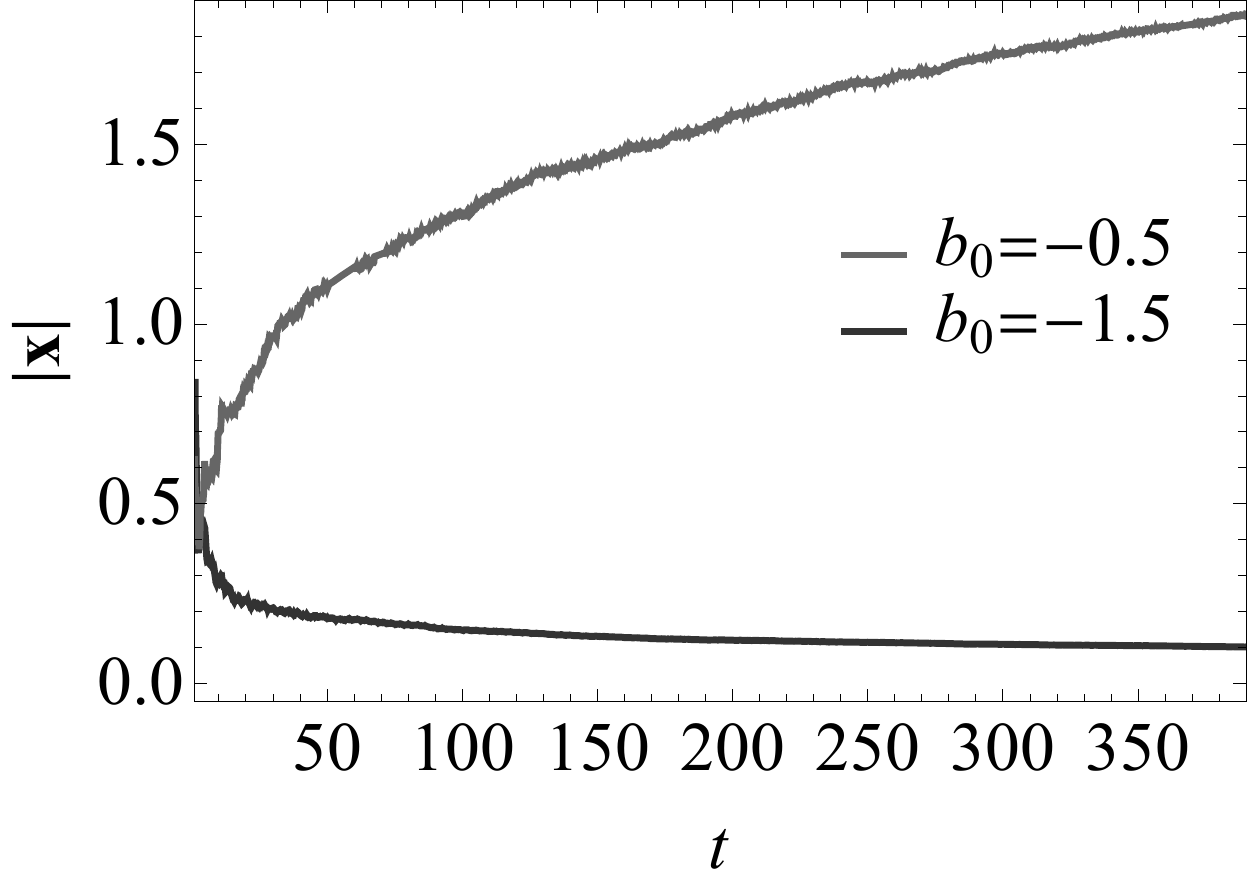}
\hspace{1ex}
  \includegraphics[width=0.4\linewidth]{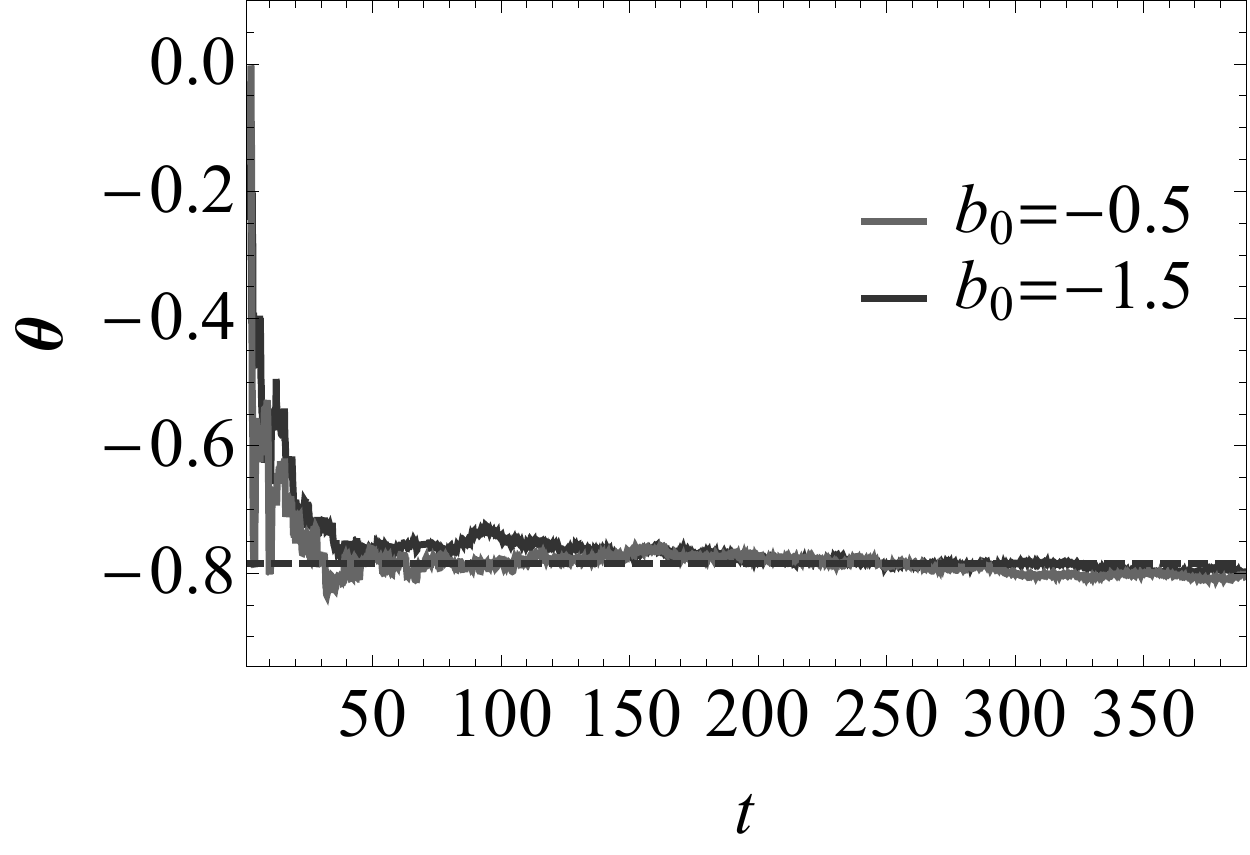}
\caption{\small The evolution of $|{\bf x}(t)|$ and $\theta(t)=\Phi(x_1(t),x_2(t))-S(t)/2$ for sample paths of solutions to system \eqref{Ex1} with $a_1=2$, $c_1=1$, $s_0=2$, $a_0=b_1=c_0=s_2=0$. The black dashed curve corresponds to $\theta(t)\equiv \phi_0$, where $\phi_0=-\pi/4$. In this case, $\lambda_1(\phi_0)<0$ if and only if $b_0<-1$.} \label{FigEx13}
\end{figure}

Note that in the case of $p=2$ the stochastic part of the decaying perturbation does not affect the stability of the equilibrium in system \eqref{Ex1}.

Thus, the stability of the equilibrium in \eqref{Ex1} depends both on the asymptotic regime in the system and on the perturbation frequency $s_0$. Moreover, the example from Section~\ref{PS} shows that stable phase locking of systems with an oscillatory stochastic perturbation is possible.

\subsection{Example 2}
Now, consider a nonlinear system
\begin{gather}\label{Ex2}
\begin{split}
&dx_1=x_2\,dt, \\
&dx_2=\left (-x_1+t^{-\frac{1}{2}}\frac{a(S(t))x_1^2x_2}{1+|{\bf x}|^2}+t^{-1}b(S(t))x_2\right)\,dt+t^{-\frac{1}{2}}c\big(S(t)\big)x_1\,dw_2(t), \quad t\geq 1,
\end{split}
\end{gather}
where 
\begin{gather*}
a(S)\equiv a_0+a_1\cos S,\quad 
b(S)\equiv b_0+b_1\cos S,\quad 
c(S)\equiv c_0+c_1\cos S,\quad 
S(t)\equiv  t+s_1t^{\frac{1}{2}}+s_2\log t
\end{gather*}
 with $p\in\mathbb Z_{+}$ and constant parameters $a_k$, $b_k$, $c_k$ and $s_k$. We see that system \eqref{Ex2} is of the form \eqref{FulSys} with $q=2$, 
\begin{gather*}
{\bf a}({\bf x},t)\equiv {\bf a}_0({\bf x})+t^{-\frac{1}{2}}{\bf a}_1({\bf x},S(t))+t^{-1}{\bf a}_2({\bf x},S(t)), \quad 
{\bf A}({\bf x},t)\equiv t^{-\frac{1}{2}}{\bf A}_1({\bf x},S(t)),\\
{\bf a}_0({\bf x})\equiv \begin{pmatrix}x_2\\-x_1\end{pmatrix}, \ \ 
{\bf a}_1({\bf x},S)\equiv \begin{pmatrix}0\\ \frac{a(S)x_1^2x_2}{1+|{\bf x}|^2}\end{pmatrix}, \ \ 
{\bf a}_2({\bf x},S)\equiv \begin{pmatrix}0\\ b(S)x_2\end{pmatrix}, \ \ 
{\bf A}_p({\bf x},S)\equiv \begin{pmatrix}0&0\\0& c(S)x_1\end{pmatrix}.
\end{gather*}
As in the previous example, the Hamiltonian of corresponding limiting system \eqref{LimSys} has the form $H(x_1,x_2)\equiv |{\bf x}|^2/2$. The change of variables described in Theorem~\ref{Th1} with $\ell=0$, $N=2$, $\xi_1(\phi,E)\equiv \sqrt{2E}\cos \phi$, $\xi_2(\phi,E)\equiv -\sqrt{2E}\sin \phi$,  $\tilde v_2(R,\theta,t)\equiv t^{-1/2}v_1(R,\theta,S(t))+t^{-1}v_2(R,\theta,S(t))$, $\tilde \psi_2(R,\theta,t)\equiv t^{-1/2}\psi_1(R,\theta,S(t))+ t^{-1}\psi_2(R,\theta,S(t))$
transforms system \eqref{Ex1} into the form \eqref{EQs} with 
\begin{align*}
\Lambda_1(v,\psi)=&\frac{a_0 v^3}{4 (1+2v^2)}, \\ 
\Omega_1(v,\psi)=&-\frac{s_1}{2}, \\
\Lambda_2(v,\psi)=&\frac{v}{96}\left(48 b_0+18c_0^2+9c_1^2+6c_1^2 \cos 2\psi -\frac{2a_1^2 v^4 \sin 2\psi}{(1+2v^2)^2}\right),\\
\Omega_2(v,\psi)=&-s_2-\frac{c_1^2\sin 2\psi}{16} \\
&-\frac{v^4 \left(165 a_0^2+104a_1^2-20 a_1^2 \cos2\psi+12 v^2(25 a_0^2 + 16 a_1^2 + 5 a_1^2 \cos 2 \psi )\right)}{960(1 + 2 v^2)^3},
 \end{align*}
It follows that conditions \eqref{LnOm}, \eqref{Lnas2} and \eqref{Omas} hold with $n=m=1$, $l=1$, $h=3$ and
\begin{align*}
&\lambda_{1,3}(\psi)\equiv \frac{a_0}{4}, \quad 
\lambda_{2}(\psi)\equiv \frac{48 b_0+18c_0^2+9c_1^2+6c_1^2 \cos 2\psi}{96}, \\ 
&\omega_{1,0}(\psi)\equiv -\frac{s_1}{2}, \quad 
\omega_{2,0}(\psi)\equiv -s_2-\frac{c_1^2\sin 2\psi}{16}.
\end{align*}
Moreover, assumption \eqref{classPert} holds with $\mu=|c_0|+|c_1|$ and $2p/q=1$.

Let $s_1\neq 0$. Then assumption \eqref{nzerom} holds and Theorem~\ref{Th5} is applicable with $m<q$. Hence, the phase drifting regime is realized, and the equilibrium $(0,0)$ is stable if $a_0<0$ and $b_0<-(6c_0^2+5c_1^2)/16$ (see Fig.~\ref{FigEx21}).

\begin{figure}
\centering
  \includegraphics[width=0.4\linewidth]{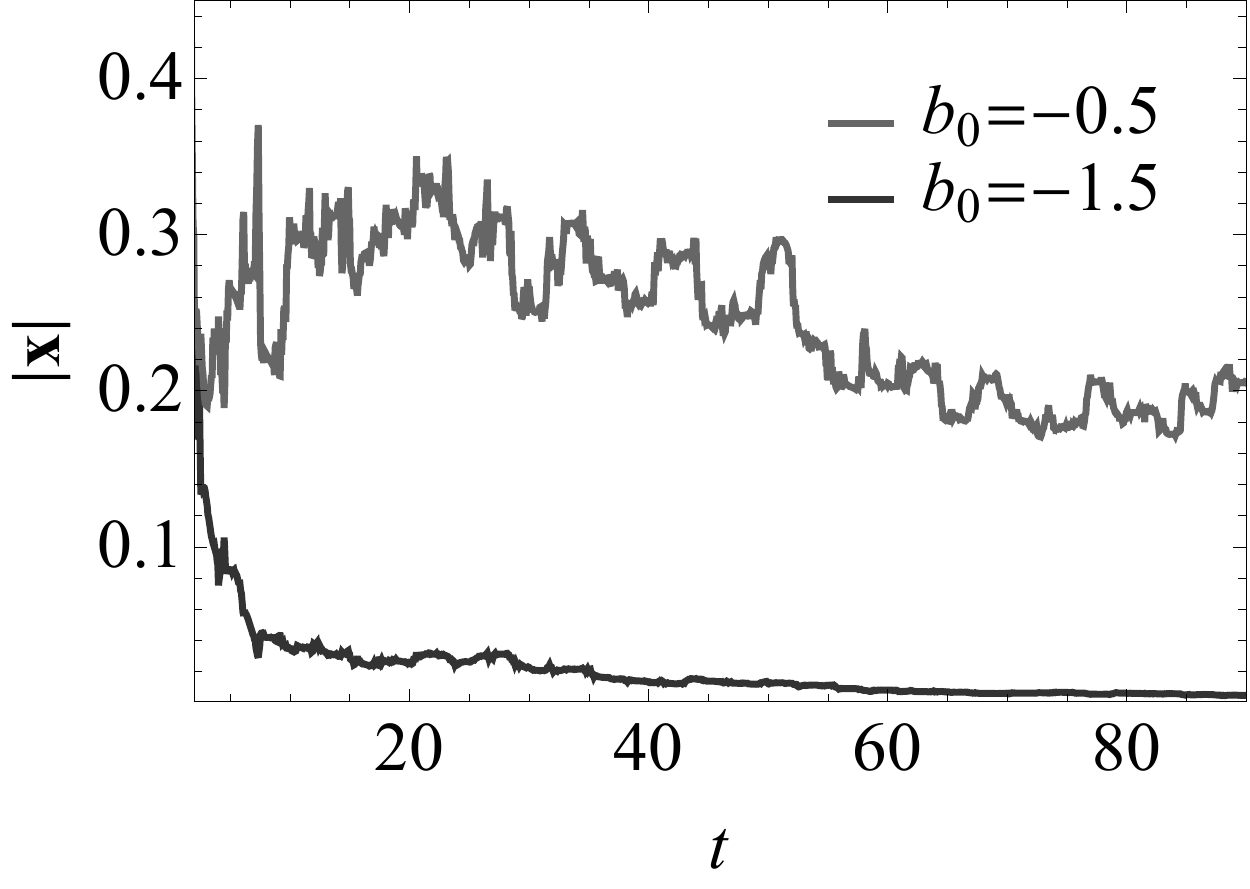}
\caption{\small The evolution of $|{\bf x}(t)|$  for sample paths of solutions to system \eqref{Ex2} with 
$a_0=-0.1$, 
$a_1= b_1=s_1=1$, 
$c_0=s_2=0$, 
$c_1=2$.} \label{FigEx21}
\end{figure}

Now let $s_1=0$. If $16|s_2|<c_1^2$, then the assumptions \eqref{LnOm} and \eqref{zerom} hold with $m=2$ and 
\begin{gather*} 
\phi_0= \frac{1}{2}\arcsin \left(-\frac{16 s_2}{c_1^2}\right)+\pi j, \quad 
j\in\mathbb Z, \quad 
\vartheta_2=-\frac{c_1^2}{4} \cos 2\phi_0<0,
\end{gather*}
and Theorem~\ref{Th3} is applicable with $n+l=m=q=2p$. Hence, the phase locking regime takes place (see~Fig.~\ref{FigEx22}), and the equilibrium $(0,0)$ is stable on exponentially long time interval as $\mu\to 0$ if 
\begin{gather*}
a_0<0, \quad 
b_0<b_\ast, \quad b_\ast:=-\frac{6c_0^2+3c_1^2+2\sqrt{c_1^4- (16 s_2)^2}}{16}.
\end{gather*}
\begin{figure}
\centering
 \includegraphics[width=0.4\linewidth]{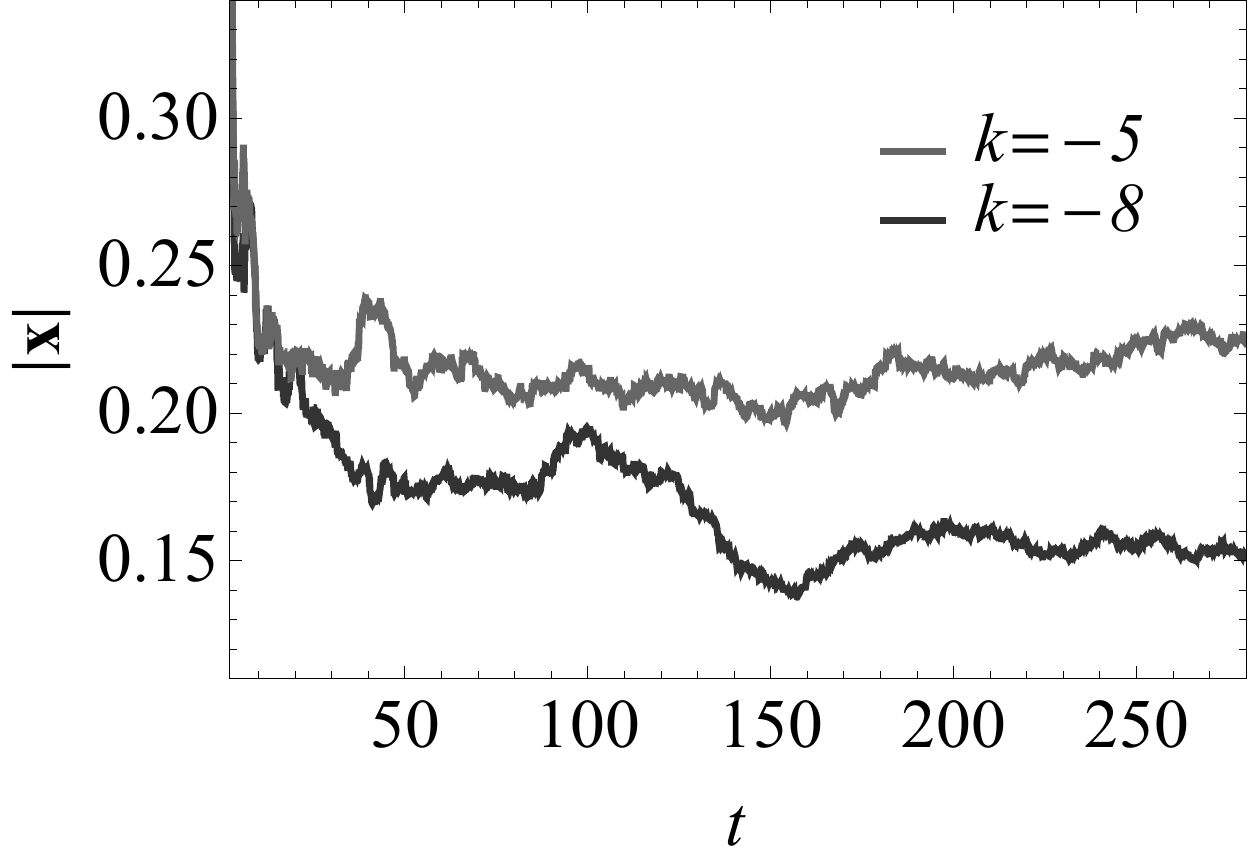}
\hspace{1ex}
  \includegraphics[width=0.4\linewidth]{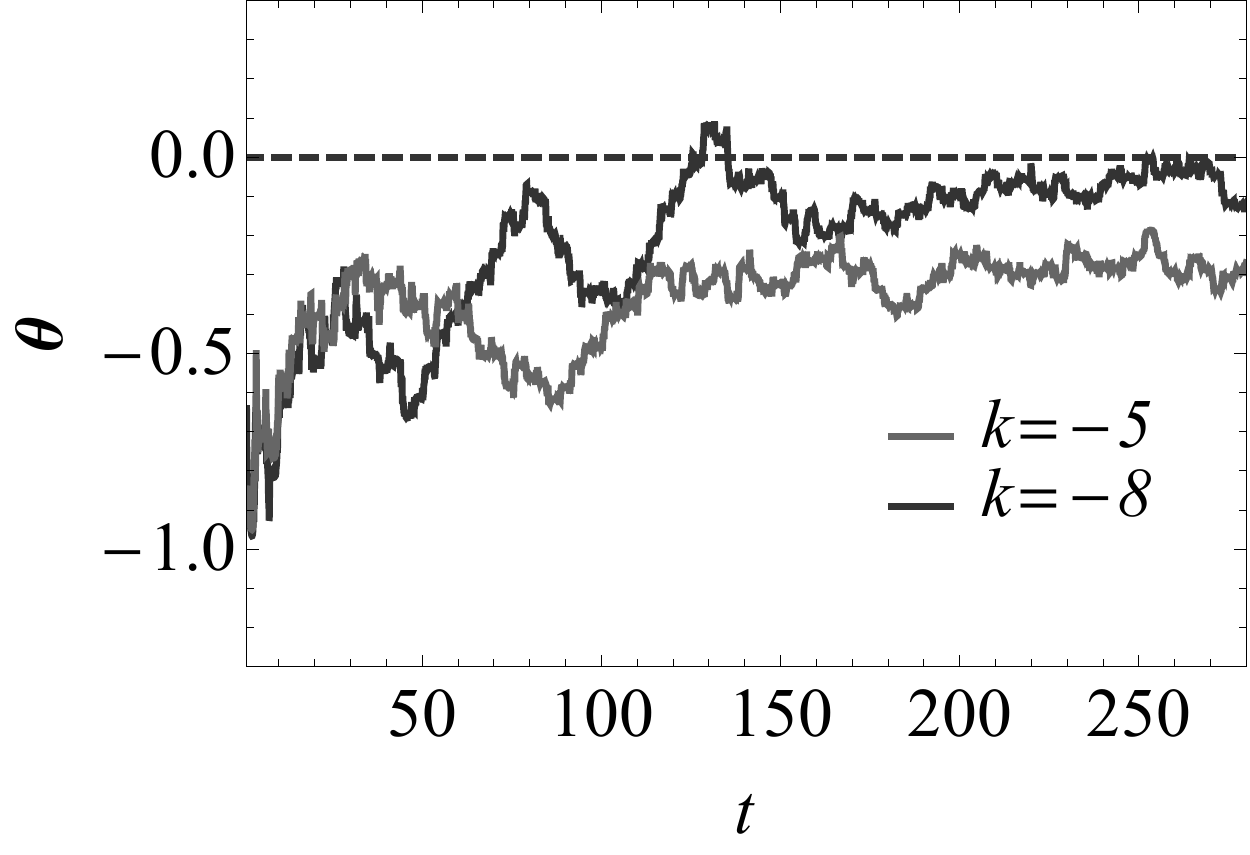}
\caption{\small The evolution of $|{\bf x}(t)|$ and $\theta(t)=\Phi(x_1(t),x_2(t))-S(t)$ for sample paths of solutions to system \eqref{Ex2} with 
$a_0=-1$, 
$a_1= b_1=1$, 
$c_0=s_1=s_2=0$, 
$b_0=k\mu^2/16$,
$c_1=\mu$, $\mu=0.5$.
The black dashed curve corresponds to $\theta(t)\equiv \phi_0$, where $\phi_0=0$. In this case, $b_\ast=-5\mu^2/16$.} \label{FigEx22}
\end{figure}
Moreover, form the second part of Theorem~\ref{Th3} it follows that if 
$a_0<0$ and $b_0>b_\ast$,
then the equilibrium $(0,0)$ is stable (see Fig.~\ref{FigEx23}) in the sense of \eqref{deftilde} with
\begin{gather*}
\tilde d({\bf x},t)\equiv \sqrt{t^{\frac{1}{2}}\left|\frac{|{\bf x}|}{\sqrt 2}-u_\varepsilon(t)\right|^2+|\Phi(x_1,x_2)-S(t)-\phi_\varepsilon(t)|^2},\\
u_\varepsilon(t)\approx t^{-\frac{1}{4}} u_0, \quad \phi_\varepsilon(t)\approx \phi_0, \quad t\to\infty, \quad u_0=\left[\frac{4\lambda_{2}(\phi_0)+1}{|a_0|}\right]^{\frac{1}{2}}.
\end{gather*}
\begin{figure}
\centering
 \includegraphics[width=0.4\linewidth]{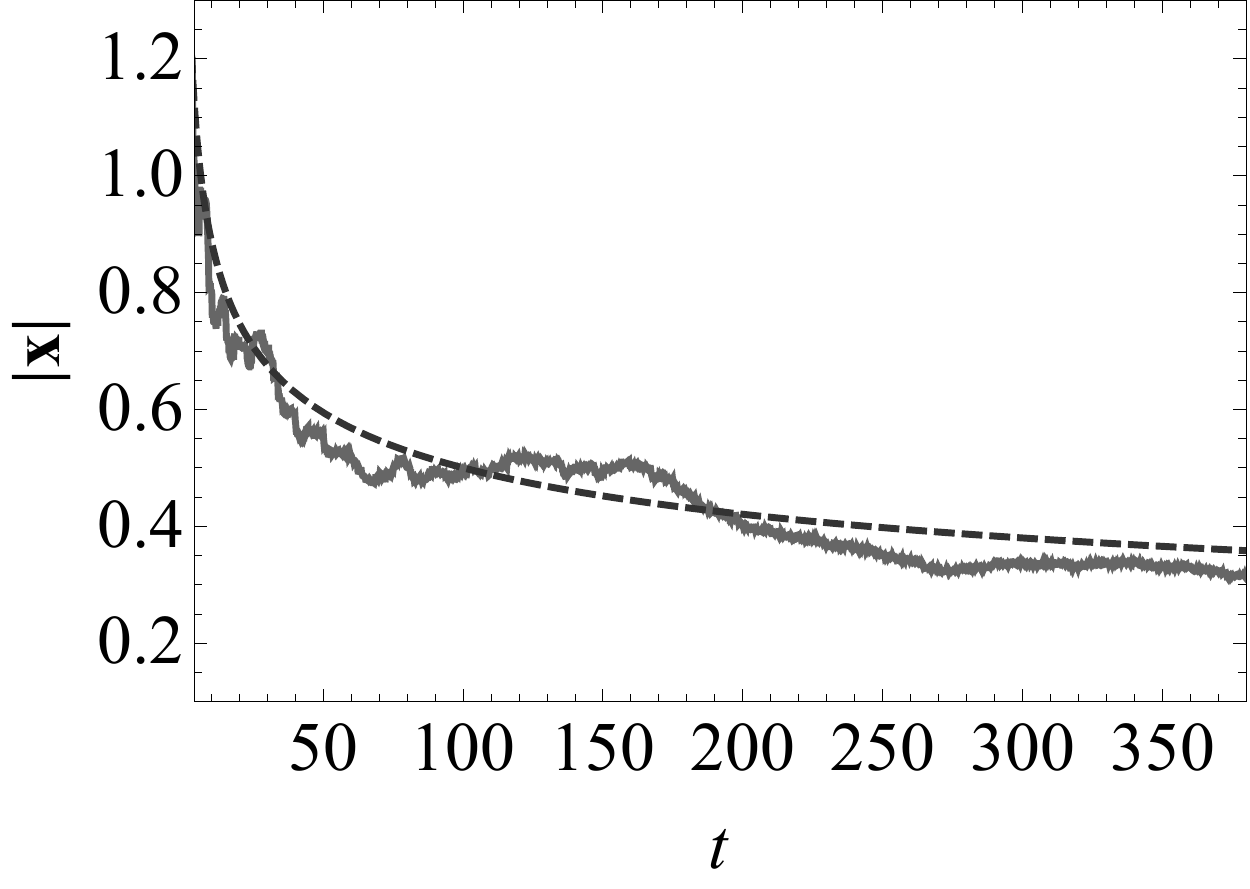}
\hspace{1ex}
 \includegraphics[width=0.4\linewidth]{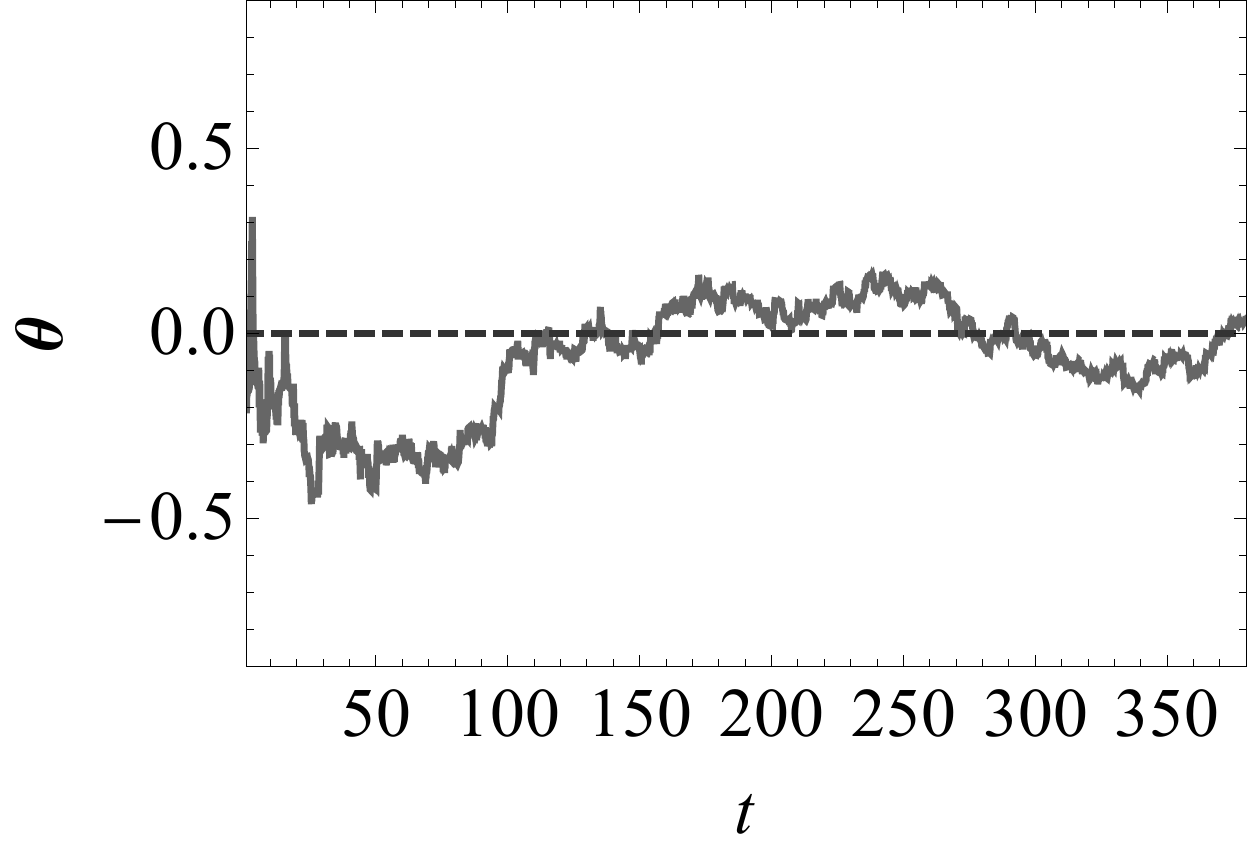}
\caption{\small The evolution of $|{\bf x}(t)|$ and $\theta(t)=\Phi(x_1(t),x_2(t))-S(t)$ for sample paths of solutions to system \eqref{Ex2} with 
$a_0=-1$, 
$a_1= b_1=1$, 
$c_0=s_1=s_2=0$, 
$b_0=3\mu^2/16$,
$c_1=\mu$, $\mu=0.5$.
The black dashed curves correspond to $|{\bf x}(t)|= t^{-1/4}u_0$ and $\theta(t)\equiv \phi_0$, where $u_0=\sqrt{5/2}$ and $\phi_0=0$.} \label{FigEx23}
\end{figure}

\section{Conclusion}
Thus, the influence of damped stochastic perturbations with oscillating coefficients on the stability of the equilibrium in asymptotically Hamiltonian systems in the plane has been investigated. We have shown that depending on the structure and the degree of perturbations damping, two asymptotic regimes for solutions near the equilibrium are possible: phase locking, when the phase of the perturbed system adjusts to the perturbation with a finite phase difference limit, and phase drifting, when the phase difference increases unlimitedly. The found stability conditions depend on the realized asymptotic regime. In particular, in the case of the phase locking regime, the stability of the equilibrium depends on the value of the phase difference limit. 

Note that the described stability conditions are only sufficient. Comparing these results with the conclusions for the corresponding truncated system \eqref{DetSys} indicates that the stochastic stability conditions are close to necessary. However, the problem of instability of the equilibrium for both asymptotic regimes has not been investigated in detail in this paper. This will be discussed elsewhere.  

\section*{Acknowledgements}

The research is supported by the Russian Science Foundation (Grant No. 23-11-00009).

\end{document}